\DeclareMathOperator{\polylog}{polylog}
\def\A{\mathcal{A}}
\def\C{\mathcal{C}}
\def\P{\mathcal{P}}
\let\eps\varepsilon
\def\reals{\mathbb{R}}
\newcommand{\CC}{\mathbb{C}}
\newcommand{\RR}{\mathbb{R}}
\newtheorem{theorem}{Theorem}[section]
\newtheorem{lemma}[theorem]{Lemma}
\newtheorem{corollary}[theorem]{Corollary}
\newtheorem*{incidencesPtsCurvesThm}{Theorem \ref{incidencesPtsCurves}}
\newtheorem*{cutSpaceCurvesLem}{Lemma \ref{cutSpaceCurves}}
\newtheorem*{levelsInArrThm}{Theorem \ref{theo:level}}
\newtheorem*{manyfacesCurvesWeakBdThm}{Theorem \ref{manyfacesCurvesWeakBd}}
\theoremstyle{remark}
\newtheorem{remark}{Remark}[section]
\newtheorem{definition}{Definition}[section]
\begin{document}

\title{Cutting Algebraic Curves into Pseudo-segments and Applications }

\author{
  Micha Sharir\thanks{%
    Blavatnik School of Computer Science, Tel Aviv University, Tel-Aviv 69978,
    Israel; \textsl{michas@post.tau.ac.il}. Supported by Grant 2012/229 from the
    U.S.-Israel Binational Science Foundation, by Grant 892/13 from
    the Israel Science Foundation, by the Israeli Centers for Research
    Excellence (I-CORE) program (center no.~4/11), and by the Hermann
    Minkowski--MINERVA Center for Geometry at Tel Aviv University.} 
  \and
  Joshua Zahl\thanks{%
  Department of Mathematics,
    MIT, Cambridge, MA, USA;
    \textsl{jzahl@mit.edu}. Supported by an NSF postdoctoral fellowship.} 
}

\maketitle

\begin{abstract}
We show that a set of $n$ algebraic plane curves of constant maximum degree can be cut into 
$O(n^{3/2}\operatorname{polylog} n)$ Jordan arcs, so that each pair of arcs intersect at most once, i.e., they form a collection of \emph{pseudo-segments}. This extends a similar (and slightly better) bound for pseudo-circles due to Marcus and Tardos. Our result is based on a technique of Ellenberg, Solymosi and Zahl that transforms arrangements of plane curves into arrangements of space curves, so that lenses (pairs of subarcs of the curves that intersect at least twice) become vertical depth cycles. We then apply a variant of a technique of Aronov and Sharir to eliminate these depth cycles by making a small number of cuts, which corresponds to a small number of cuts to the original planar arrangement of curves. After
these cuts have been performed, the resulting curves form a collection of pseudo-segments.
  
Our cutting bound leads to new incidence bounds between points and constant-degree algebraic curves. The conditions for these incidence bounds are slightly stricter than those for the current best-known bound of Pach and Sharir; for our result to hold, the curves must be algebraic and of bounded maximum degree, while Pach and Sharir's bound only imposes weaker, purely topological constraints on the curves. However, when our conditions hold, 
the new bounds are superior for almost all ranges of parameters. We also obtain new bounds on the complexity of a single level in an arrangement of constant-degree algebraic curves, and a new bound on the complexity of many marked faces in an arrangement of such curves. 
\end{abstract}

\section{Introduction}
\label{sec:intro}

Let $\Gamma$ be a finite set of curves in $\reals^2$. The \emph{arrangement} $\A(\Gamma)$ of $\Gamma$
is the planar subdivision induced by $\Gamma$. Its vertices are the intersection points and the endpoints
of the curves of $\Gamma$, its edges are the maximal (relatively open) connected subsets of curves in $\Gamma$ 
not containing a vertex, and its faces are maximal (open) connected subsets of $\reals^2 \setminus \bigcup_{\gamma\in\Gamma}\gamma$. 
Because of their rich geometric structure and numerous applications, arrangements of curves, and especially of
lines and segments, have been widely studied. See \cite{SA} for a comprehensive survey.

A \emph{Jordan arc} is the homeomorphic image of the open interval $(0,1)$\footnote{sometimes in the literature a Jordan arc is defined to be the homeomorphic image of the closed interval $[0,1]$. In this paper, however, we will always use open intervals}; unless otherwise specified, all 
such arcs will be in $\RR^2$. We say that a set $\Gamma$ of Jordan arcs is a set of \emph{pseudo-segments} 
if every pair of arcs in $\Gamma$ intersect at most once, and the arcs cross properly at the point of intersection. Note that in this paper, pseudo-segments may be unbounded. Many combinatorial results on arrangements of lines or segments extend to arrangements of pseudo-segments. 
Three notable examples are (i) the complexity of a single level in an arrangement, (ii) the number 
of incidences between points and curves in the arrangement, and (iii) the complexity of many (marked) 
faces in an arrangement; see, e.g., \cite{AAS,Ch,Sze}. 

However, when two curves are allowed to intersect more than once, the resulting complexity bounds become
weaker. One strategy to address this issue is to cut each curve into several pieces so that the resulting pieces form a collection
of pseudo-segments, and then apply the existing bounds for pseudo-segments to the resulting collection. 
If each pair of curves intersect at most $E$ times, then it is always possible to cut $n$ such curves into 
at most $En^2$ pieces, so that each pair of pieces intersect at most once. When one does this, however, 
the resulting complexity bounds for problems (i)--(iii) are generally poor. In order to obtain 
better bounds, one must cut the curves into fewer pieces. 

This strategy has been pursued successfully for the past 15 years; see~\cite{AAS,ANPPSS,ALPS,AS,lilach,Ch,Ch2,PiSm,TT}. 
However, previous work has almost exclusively focused on arrangements where each pair of curves can intersect at most 
twice (sets of curves of this type are called \emph{pseudo-circles}, or, if unbounded, \emph{pseudo-parabolas}). The current best result in this direction is 
the work of Agarwal et al.~\cite{ANPPSS}, supplemented by that of Marcus and Tardos \cite{MT}. They showed that when $\Gamma$ 
is a set of $n$ pseudo-circles, it is possible to cut the curves of $\Gamma$ into a set of $O(n^{3/2}\log n)$
pseudo-segments.  There are only a few (and considerably weaker) results of this kind for more general families of curves; they include 
works by Chan~\cite{Ch,Ch2} and by Bien~\cite{lilach}. 

In the present paper we study algebraic curves (or more generally, connected subsets of algebraic curves) of constant 
maximum degree. Pairs of such curves might intersect many times---by B\'ezout's theorem, they might intersect as many as $D^2$ times, where $D$
is the maximum degree of the curves. Our main result is a new technique for cutting the curves in such a 
set into a relatively small number of Jordan arcs, each pair of which intersect at most once. 
Our method only applies to algebraic curves (or slightly more generally, connected subsets of algebraic curves), 
but it works well no matter how many times the curves intersect (in brief, the bounds in our results become weaker, but only very slowly, as the degree of the curves increases).

Let $\C$ be a set of algebraic plane curves, no two of which share a common component. Let $\Gamma_0$ be a set of Jordan arcs, each pair of which have finite intersection. We say that $\Gamma_0$ is a \emph{cutting}\footnote{%
  Not to be confused with the notion of $(1/r)$-cutting, which is a decomposition of the plane
  induced by the given curves; see, e.g., \cite{mat-book}.} of $\C$ if each 
curve in $\C$ can be expressed as a finite union of arcs from $\Gamma_0$ plus finitely many points 
(the points at which the original curves are cut). Similarly, let $\Gamma$ be a set of Jordan arcs, each of which is contained in a plane curve, and each pair of which have finite intersection. 
A set $\Gamma_0$ of Jordan arcs is said to be a \emph{cutting} of $\Gamma$ if each 
curve in $\Gamma$ can be expressed as a finite union of arcs from $\Gamma_0$ plus finitely many points. (It is possible to write down a single definition of a cutting for a collection of
curves that contains both of the previous definitions as special cases, but this definition is rather technical so we will not do so here.)

We can now state our main result.
\begin{theorem}[Cutting algebraic curves into pseudo-segments]\label{cuttingCurvesIntoSegments}
Let $\mathcal C$ be a set of $n$ algebraic plane curves of degree at most $D$, no two of which share a common component. Then $\C$ 
can be cut into\footnote{%
  We use the standard notation $O_{\kappa}(\cdot)$ to refer to a constant of proportionality that depends on the parameter
  or parameters $\kappa$.} 
$O_D(n^{3/2} \log^{O_D(1)}n)$ Jordan arcs, so that each pair 
of arcs intersect in at most one point.
\end{theorem}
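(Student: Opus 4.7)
The plan is to reduce the cutting problem to a question about eliminating \emph{lenses}, and then resolve that question by lifting to a space-curve configuration in $\mathbb{R}^3$ and invoking a depth-cycle removal result. A family of Jordan arcs fails to be a set of pseudo-segments exactly when two of its arcs form a lens, i.e.\ a pair of subarcs lying on distinct curves that share the same two endpoints and bound a disk-like region; every pair of curves that cross more than once gives rise to such a lens between any two consecutive crossings. Thus it suffices to show that one can introduce $O_D(n^{3/2}\log^{O_D(1)} n)$ cuts so that no lens among the resulting arcs survives.

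Following the Ellenberg-Solymosi-Zahl lifting approach, I would lift each curve $C\in\mathcal{C}$ to an algebraic space curve $\tilde C\subset\mathbb{R}^3$ of degree $O_D(1)$ by appending a third coordinate that records some local differential or algebraic datum at each point of $C$ (for instance, the slope of the tangent line to $C$, or the value of a fixed partial derivative of a defining polynomial). The crucial property of such a lift is that at a transverse intersection of two plane curves $C_1,C_2$, the lifts $\tilde C_1,\tilde C_2$ acquire a well-defined vertical ordering above the intersection point, and this ordering flips as one passes from one endpoint of a lens to the other. Consequently, if $\alpha\subset C_1$ and $\beta\subset C_2$ form a lens in the plane, then above the lens the lifts $\tilde\alpha,\tilde\beta$ form a vertical depth cycle, in the sense that neither consistently lies above the other along the common $x$-projection.

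Having translated lenses into vertical depth cycles, I would apply the space-curve cutting lemma (Lemma~\ref{cutSpaceCurves}), which is a variant of the Aronov-Sharir depth-cycle elimination procedure, to the arrangement $\{\tilde C : C\in\mathcal{C}\}$. This should produce $O_D(n^{3/2}\log^{O_D(1)} n)$ cuts destroying every vertical depth cycle in the lifted arrangement. Pulling these cuts back to the plane via the projection yields a cutting of $\mathcal{C}$ of the claimed size, and since every lens has been converted into a depth cycle that is now broken, no lens remains among the resulting arcs. Hence they form a collection of pseudo-segments.

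The main obstacle, and the technical heart of the argument, will be making the lifting-to-depth-cycle correspondence robust to degeneracies. One must handle tangential (non-transverse) intersections of pairs of plane curves, points at which the lift itself has a vertical tangent, singular points of the underlying curves, and lenses whose endpoints are of non-generic type; one must also ensure that lenses with an even number of intersections on their boundary, whose depth flips might cancel, are still detected. These exceptional points need to be isolated by a generic choice of the vertical direction and shown to contribute only a lower-order term to the total cut count. One must additionally verify that the lifted curves have the bounded algebraic complexity required by Lemma~\ref{cutSpaceCurves}, so that the depth-cycle bound applies uniformly in the degree $D$. Once these technicalities are in place, Theorem~\ref{cuttingCurvesIntoSegments} follows directly from combining the lifting construction with the space-curve cutting lemma.
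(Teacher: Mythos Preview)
Your overall strategy matches the paper's proof: lift the plane curves to space curves via the slope coordinate, convert lenses into vertical depth cycles, apply Lemma~\ref{cutSpaceCurves} to destroy those cycles with $O_D(n^{3/2}\log^{O_D(1)}n)$ cuts, and project back. The paper also pre-processes by cutting at points where $\partial_y f=0$ so that each arc is $x$-monotone and smooth, which is what makes the lens-to-depth-cycle correspondence (Lemma~\ref{lensesAndDepthCycles}) clean.

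There is, however, a real gap in your proposed handling of tangential intersections. You suggest that tangencies and other degeneracies ``need to be isolated by a generic choice of the vertical direction and shown to contribute only a lower-order term.'' Neither of these is true. Tangencies between two plane curves are intrinsic to the pair; no rotation of the coordinate frame removes them. And the number of curve--curve tangencies among $n$ degree-$D$ curves is \emph{not} lower order: the best bound is $O_D(n^{3/2})$, which is exactly the same order as the main term. The issue is structural: at a tangential intersection the two lifts $\hat C_1,\hat C_2$ actually \emph{meet} (they share the same slope), so the associated depth cycle is not proper, and Lemma~\ref{cutSpaceCurves} only promises to eliminate proper depth cycles. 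A lens with a tangential endpoint may therefore survive the depth-cycle cutting step.

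The paper resolves this by treating tangencies as a separate cutting step: after applying Lemma~\ref{cutSpaceCurves} and projecting, it additionally cuts every arc at every point where it is tangent to another curve, and invokes the Ellenberg--Solymosi--Zahl tangency theorem \cite[Theorem~1]{ESZ} to bound the total number of such cuts by $O_D(n^{3/2})$. This is an independent nontrivial input, not a degeneracy that can be perturbed away; you should invoke it explicitly rather than expect it to be absorbed into the error terms.
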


The above theorem uses the fact that there are at most $O_D(n^2)$ pairwise intersections amongst the curves in $\mathcal C$. 
While this serves as a general upper bound, the actual number of intersections might be much smaller. The following theorem provides a refined bound that depends on the actual number of intersections. 
It is stated in a more general setup that involves Jordan arcs contained in algebraic curves rather than the entire algebraic curves themselves.

\begin{theorem}[Cutting algebraic arcs into pseudo-segments]\label{cuttingArcsIntoSegments}
Let $\Gamma$ be a set of $n$ Jordan arcs, each of which is contained in an algebraic curve of degree at most $D$, and every pair of which have finite intersection. Let 
$X=\sum_{\substack{\gamma,\gamma^\prime\in\Gamma\\\gamma\neq\gamma^\prime}}|\gamma\cap\gamma^\prime|$ 
be the number of times pairs of curves from $\Gamma$ intersect. Then $\Gamma$ can be cut into
$O_D(n + X^{1/2} n^{1/2} \log^{O_D(1)}n)$ Jordan arcs, so that each pair of arcs intersect 
in at most one point. In the worst case, the bound is $O_D(n^{3/2} \log^{O_D(1)}n)$
(as in Theorem~\ref{cuttingCurvesIntoSegments}).
\end{theorem}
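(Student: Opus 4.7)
The plan is to re-run the proof of Theorem~\ref{cuttingCurvesIntoSegments} while tracking the dependence on the actual number of intersections $X$, rather than substituting the worst-case estimate $X = O_D(n^2)$. The regime $X \le n$ is trivial: cutting each arc at every one of its intersection points produces at most $n + 2X = O(n)$ pseudo-segments, which matches the claimed bound. I therefore focus on the case $X \ge n$.

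The proof of Theorem~\ref{cuttingCurvesIntoSegments}, as outlined in the abstract, proceeds in two steps. First, the planar arrangement of $\Gamma$ is lifted, via the Ellenberg--Solymosi--Zahl construction, to an arrangement of algebraic space curves in which every \emph{lens} (a pair of subarcs of two distinct arcs in $\Gamma$ meeting in at least two points) becomes a vertical depth cycle. Second, a variant of the Aronov--Sharir cycle-elimination technique is used to destroy all these depth cycles with a small number of cuts to the space curves; projecting back to $\RR^2$, each such cut induces at most one cut in the planar arrangement, and the resulting planar arcs form a set of pseudo-segments. The cost of the second step is governed jointly by $n$ and the number $\Lambda$ of lenses, which in the proof of Theorem~\ref{cuttingCurvesIntoSegments} is bounded only by the worst case $\Lambda = O_D(n^{2})$.

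Here I would estimate $\Lambda$ directly in terms of $X$. Each pair $\gamma\ne\gamma^\prime$ of arcs in $\Gamma$ lies on algebraic curves of degree at most $D$, so B\'ezout gives $|\gamma\cap\gamma^\prime|\le D^2$; moreover every lens between $\gamma$ and $\gamma^\prime$ is determined by a choice of two of the intersection points of $\gamma$ and $\gamma^\prime$, so
\[
\Lambda \;\le\; \sum_{\gamma\ne\gamma^\prime}\binom{|\gamma\cap\gamma^\prime|}{2}\;\le\;\frac{D^{2}-1}{2}\sum_{\gamma\ne\gamma^\prime}|\gamma\cap\gamma^\prime|\;=\;O_D(X).
\]
Substituting $\Lambda = O_D(X)$ into the intersection-sensitive form of the Aronov--Sharir cycle-cutting bound then yields $O_D\bigl(\sqrt{n\Lambda}\,\log^{O_D(1)}n\bigr) = O_D\bigl(\sqrt{nX}\,\log^{O_D(1)}n\bigr)$ cuts, which combined with the trivial $O(n)$ bound in the opposite regime gives the bound asserted by the theorem. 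In the extreme case $X = \Theta(n^2)$ this recovers Theorem~\ref{cuttingCurvesIntoSegments}, as expected.

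The principal obstacle is verifying that the cycle-elimination lemma used inside the proof of Theorem~\ref{cuttingCurvesIntoSegments} really admits the intersection-sensitive form $O_D(\sqrt{n\Lambda}\log^{O_D(1)}n)$, rather than only the worst-case $O_D(n^{3/2}\log^{O_D(1)}n)$ obtained by setting $\Lambda = n^{2}$. If the required sensitivity is not already present, the standard remedy is a Clarkson--Shor random-sampling and divide-and-conquer scheme: take a random sample of $r$ lifted curves, build a $(1/r)$-cutting of $\RR^3$ whose cells each meet $O(n/r)$ curves and contain in expectation $O(\Lambda/r^2)$ lenses, apply the worst-case cycle-cutting bound inside every cell, and optimize $r$ so that the two contributions balance. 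Absorbing the polylogarithmic overhead introduced by the cutting and the recursion into the final $\log^{O_D(1)}n$ factor, while keeping the dependence on $D$ transparent, is the most delicate piece of the bookkeeping in this approach.
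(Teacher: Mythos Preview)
Your overall strategy---handle $X\le n$ trivially, then do a divide-and-conquer that reduces to the worst-case bound of Theorem~\ref{cuttingCurvesIntoSegments} on subproblems---is the right shape, and your bound $\Lambda=O_D(X)$ on the number of lenses is correct. But the place where you propose to run the divide-and-conquer is the wrong one, and this is a genuine gap, not just bookkeeping.

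Lemma~\ref{cutSpaceCurves} (the Aronov--Sharir-style cycle elimination) has no intersection-sensitive form in this paper: its proof is a polynomial-partition recursion in~$\RR^3$ that only tracks the number of curves, never the number of depth cycles. Your proposed remedy---a $(1/r)$-cutting of~$\RR^3$ for the lifted curves---does not obviously yield the needed balance. The relevant ``vertices'' for depth cycles are pairs of points on two space curves lying on a common vertical line, and the number of such pairs can be $\Theta(n^2)$ even when the planar intersection count~$X$ is small; so an intersection-sensitive cell count of the de Berg--Schwarzkopf type is not available in~$\RR^3$. With a generic $(1/r)$-cutting you would get too many cells (on the order of $r^3$, or at best $r^2$), and summing $(n/r)^{3/2}$ over them does not produce $\sqrt{nX}$.

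The paper sidesteps all of this by doing the divide-and-conquer \emph{in the plane}, before lifting. Take $r=\lceil n^2/X\rceil$ and build a planar $(1/r)$-cutting of $\A(\Gamma)$ into pseudo-trapezoids; the intersection-sensitive bound of de Berg and Schwarzkopf gives $O(r+r^2X/n^2)=O(r)$ cells, each crossed by $O(n/r)=O(X/n)$ arcs. Cutting the arcs at cell boundaries costs $O(n)$ cuts and guarantees that every surviving lens lies inside a single cell. Now, inside each cell, apply Theorem~\ref{cuttingCurvesIntoSegments} as a black box to the $O(X/n)$ containing algebraic curves, costing $O_D\bigl((X/n)^{3/2}\log^{O_D(1)}n\bigr)$ cuts per cell. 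Summing over the $O(n^2/X)$ cells gives
\[
O\!\left(\frac{n^2}{X}\right)\cdot O_D\!\left(\Big(\frac{X}{n}\Big)^{3/2}\log^{O_D(1)}n\right)=O_D\!\left(n^{1/2}X^{1/2}\log^{O_D(1)}n\right),
\]
which is the asserted bound. The point is that the intersection-sensitive cutting you need already exists in two dimensions; there is no need to open up Lemma~\ref{cutSpaceCurves} or to build a new cutting theory for space curves.
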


\begin{remark}\label{JordanArcsVsCurvesRem}
Since each algebraic curve of degree at most $D$ can be cut into $\leq D(D-1)$ pairwise disjoint Jordan arcs by removing all points at which the curve is singular or is tangent to a vertical line (see Lemma \ref{cuttingCurveJordanArcs} below), the requirement that the curves in $\Gamma$ be Jordan arcs is not a serious constraint, and we only impose it to simplify notation and readability. At the cost of introducing messy notation, we could instead formulate the theorem in a superficially more general fashion by requiring that the curves in $\Gamma$ be open connected subsets of degree $D$ curves, rather than Jordan arcs contained in degree $D$ curves. In particular, Theorem~\ref{cuttingArcsIntoSegments} is indeed a generalization of Theorem \ref{cuttingCurvesIntoSegments}. 
\end{remark}

\begin{remark}
Figure \ref{pseudocubics} depicts a set of $n$ ``pseudo-cubics'' (i.e., Jordan arcs, each pair of which 
intersect at most three times) that requires a quadratic number of cuts in order to turn it into a set 
of pseudo-segments. This demonstrates that in order to obtain sub-quadratic bounds on the number of such cuts, one must 
impose additional restrictions on the given family of curves. For example, Theorems
\ref{cuttingCurvesIntoSegments} and \ref{cuttingArcsIntoSegments} do this by requiring that 
the curves be subsets of bounded-degree algebraic curves.

\begin{figure}[hbpt]
  \centering
  \input{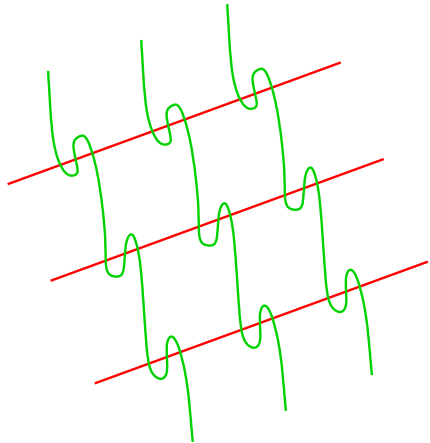}
  \caption{A set of $n$ pseudo-cubics that require $\Omega(n^2)$ cuts to turn them into pseudo-segments. 
  See Tamaki and Tokuyama~\cite[Theorem 5.3]{TT}.} 
  \label{pseudocubics}
\end{figure}
\end{remark}

\subsection{Point-curve incidences}
Theorem \ref{cuttingArcsIntoSegments} can be applied to obtain new incidence 
theorems in the plane. Pach and Sharir \cite{PS} proved that a set $\P$ of $m$ points and a set $\Gamma$
of $n$ plane curves (either Jordan arcs or bounded degree algebraic curves) determine $O_{s,t}(m^{\frac{s}{2s-1}}n^{\frac{2s-2}{2s-1}}+m+n)$ incidences, provided that 
every pair of curves of $\Gamma$ intersect at most $t$ times, and that there are at most $t$ curves of 
$\Gamma$ passing through any $s$-tuple of points of $\P$ (if the curves are algebraic plane curves, then the implicit constant also depends on the degree of the curves). Sets $\P$, $\Gamma$ of this type are said to 
have $s$ \emph{degrees of freedom} (the parameter $t$ is often suppressed, since as long as it is bounded, independently of $m$ and $n$, it only affects the implicit constant in the incidence bound).

In the case where $\Gamma$ consists of algebraic curves, we will obtain a slightly stronger bound 
under a related (though slightly different) condition. Rather than requiring $\Gamma$ and $\P$ to
have $s$ degrees of freedom, we will assume that the curves of $\Gamma$ lie in an ``$s$-dimensional 
family of curves,'' a notion that we will make precise in Section~\ref{incidencesSection} (see Definition \ref{defnFamilyOfCurves}). 
Roughly, this means that we can represent each curve of $\Gamma$ by a point that lies in some $s$-dimensional
algebraic variety in a suitable parameter space.
For the vast majority of incidence problems that arise in practice, whenever an arrangement of 
algebraic curves has $s$ degrees of freedom, these curves belong to a family of curves of dimension at most $s$. The relationship between having $s$ degrees of freedom and being contained in a family of dimension $s$ is discussed further in Appendix \ref{dimFamilyVsDegreesOfFreedom} below.

Using Theorem \ref{cuttingCurvesIntoSegments}, we can improve the Pach--Sharir bound under the assumptions made above, which hold for a large class of point-curve configurations.

\begin{theorem}[Incidences between points and algebraic curves]\label{incidencesPtsCurves}
Let $\C$ be a set of $n$ algebraic plane curves that belong to an $s$-dimensional family of curves, no two of which share a common irreducible component. Let $\P$ be a set of $m$ points in the
plane. Then for any $\eps>0$, the number $I(\P,\C)$ of incidences
between the points of $\P$ and the curves of $\C$ satisfies
\begin{equation*}
 I(\P,\C) = O\Big(m^{\frac{2s}{5s-4}} n^{\frac{5s-6}{5s-4}+\eps}\Big) + O_{D}\Big(m^{2/3}n^{2/3} + m + n\Big).
\end{equation*}
The implicit constant in the first term depends on $\epsilon$, $s$, the maximum degree of the curves, and also the ``complexity'' of the family of curves from which the set $\C$ is selected. 
\end{theorem}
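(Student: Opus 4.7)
The plan is to combine Theorem \ref{cuttingCurvesIntoSegments} with polynomial partitioning in the plane, using the $s$-dimensional family structure to control the incidences along the zero set of the partitioning polynomial.

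First, I would apply the Guth--Katz polynomial partitioning theorem to $\P$: for a parameter $r$ to be chosen later, there exists a polynomial $f$ of degree $\leq r$ such that $\reals^2\setminus Z(f)$ consists of $O(r^2)$ open cells $\Omega_i$, each containing $O(m/r^2)$ points of $\P$. For each cell, write $m_i=|\P\cap\Omega_i|$ and $n_i$ for the number of curves of $\C$ intersecting $\Omega_i$. B\'ezout's theorem gives $\sum_i n_i=O_D(nr)$. Within each cell, I would apply Theorem \ref{cuttingCurvesIntoSegments} (or rather the arc-variant, Theorem \ref{cuttingArcsIntoSegments}) to cut the restricted arcs $\gamma\cap\Omega_i$ into $O_D\bigl(n_i^{3/2}\log^{O_D(1)} n_i\bigr)$ pseudo-segments, and then bound $I_i$ using the Sz\'ekely/crossing-lemma incidence bound for pseudo-segments:
\begin{equation*}
I_i \;=\; O_D\!\left(m_i^{2/3} n_i \log^{O_D(1)} n + m_i + n_i^{3/2}\log^{O_D(1)} n\right).
\end{equation*}
Summing via H\"older's inequality produces an interior bound of the form $O_D\!\bigl((m^{2/3} n r^{-1/3} + m + n^{3/2} r^{1/2})\log^{O_D(1)} n\bigr)$.

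Second, I would handle incidences involving points on $Z(f)$. Since $Z(f)$ has at most $r$ irreducible components and no two curves of $\C$ share a component, at most $r$ curves of $\C$ lie inside $Z(f)$; the remaining curves meet $Z(f)$ in at most $O_D(nr)$ points in total, and this count is easily absorbed into the final estimate. For the $\leq r$ curves contained in $Z(f)$ together with the points of $\P$ lying on $Z(f)$, I would invoke the Pach--Sharir incidence bound, which is applicable because the $s$-dimensional family hypothesis yields the $s$-degrees-of-freedom condition required by Pach--Sharir (see Appendix \ref{dimFamilyVsDegreesOfFreedom}); this gives an additional contribution of $O(m^{s/(2s-1)} r^{(2s-2)/(2s-1)} + m + r)$.

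Third, I would optimize the parameter $r$ by balancing the dominant terms. Balancing $m^{2/3} n r^{-1/3}$ against the Pach--Sharir contribution $m^{s/(2s-1)} r^{(2s-2)/(2s-1)}$ should produce, after elementary algebra, an estimate of the desired shape $m^{\frac{2s}{5s-4}} n^{\frac{5s-6}{5s-4}}$, with any excess absorbed into the $O_D(m^{2/3} n^{2/3}+m+n)$ term (which in particular recovers the classical Pach--Sharir bound in the regime where the partitioning is ineffective). The $n^{\eps}$ factor in the theorem statement arises from iterating the partitioning argument (or from the polylogarithmic factors incurred by the cutting in Theorem \ref{cuttingCurvesIntoSegments}).

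The main obstacle I anticipate is the control of $\sum_i n_i^{3/2}$: polynomial partitioning balances the distribution of points, not of curves, so this sum does not admit the convenient bound $O(n^{3/2} r^{1/2})$ without an auxiliary argument. Overcoming this will likely require either a two-level partitioning (first by points in the plane, then by curves in the $s$-dimensional parameter space within each point-cell), or a dyadic argument that separately treats cells with anomalously many curves. A related subtlety is ensuring that the $s$-dimensional family structure descends appropriately to the $1$-dimensional restriction on $Z(f)$, so that the Pach--Sharir bound can be invoked with the correct parameters.
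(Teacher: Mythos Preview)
Your approach has a genuine gap: polynomial partitioning \emph{in the plane} cannot exploit the $s$-dimensional structure of the family $F$, and without exploiting that structure there is no route past the weak bound \eqref{boundOnIPC}. Concretely, the per-cell estimate $I_i=O_D(m_i^{2/3}n_i\polylog n)$ that you write comes from substituting $N_i=n_i^{3/2}$ into the pseudo-segment Szemer\'edi--Trotter bound $m_i^{2/3}N_i^{2/3}$; but the correct per-cell bound is the sharper $O_D(m_i^{2/3}n_i^{2/3})$, since the pseudo-segments inside $\Omega_i$ still have only $O_D(n_i^2)$ crossings (this is exactly Lemma~\ref{STResultLem}). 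Summing $m_i^{2/3}n_i^{2/3}$ over cells by H\"older reproduces $m^{2/3}n^{2/3}$ with no dependence on $r$, so the apparent gain in your first term is an artifact of using a deliberately loose cell bound. Even if one proceeds with your looser bound, balancing $m^{2/3}nr^{-1/3}$ against $m^{s/(2s-1)}r^{(2s-2)/(2s-1)}$ yields the exponent pair $\bigl(\tfrac{5s-4}{8s-7},\tfrac{6s-6}{8s-7}\bigr)$, not the target $\bigl(\tfrac{2s}{5s-4},\tfrac{5s-6}{5s-4}\bigr)$; already at $s=3$ this is $m^{11/17}n^{12/17}$ rather than $m^{6/11}n^{9/11}$. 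And the obstacle you flag for $\sum_i n_i^{3/2}$ is real and not removable within a purely planar partition.

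There is also a logical gap in invoking Pach--Sharir on $Z(f)$: belonging to an $s$-dimensional family does \emph{not} in general imply $s$ degrees of freedom; Appendix~\ref{dimFamilyVsDegreesOfFreedom} proves the opposite implication and gives counterexamples to the direction you need. The paper's proof is structurally different. It records the weak bound $I=O_D(m^{2/3}n^{2/3}+n^{3/2}\polylog n+m)$ via Theorem~\ref{cuttingCurvesIntoSegments} and Sz\'ekely, then \emph{dualizes} (Lemma~\ref{dualityTransformLem}) so that the curves of $\C$ become points in $\RR^s$ and the points of $\P$ become hypersurfaces of degree $O_{D,\deg F}(1)$, and applies the Matou\v{s}ek--Pat\'akov\'a multilevel polynomial partitioning (Theorem~\ref{thm:mp}) \emph{in the dual space} $\RR^s$. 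The argument is then an induction on subproblems, bottoming out via the weak bound when $m\ge n^{5/4+\eps'}$ and via the Fox--Pach--Sheffer--Suk--Zahl semi-algebraic $K_{D^2+1,2}$-free bound $I=O(mn^{1-1/s}+n)$ when $m\le n^{1/s}$. The $n^\eps$ loss comes from the recursion, not from the polylog in the cutting. Your proposed ``two-level'' fix is groping toward this; the key missing idea is that the partitioning must take place in the $s$-dimensional parameter space, not in $\RR^2$.
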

In Section \ref{incidencesSection} we will give the precise definition of a $s$-dimensional family of curves, and in Section \ref{newIncidenceBoundsSection} we will  state a more rigorous version of Theorem \ref{incidencesPtsCurves} that describes how the implicit constant in the first term depends on the family of curves. 

\subsection{The complexity of a single level in an arrangement}\label{complexityOfALevelSec}

Given a collection $\Gamma$ of algebraic curves, or subsets of such curves,
the {\em level} of a point $p=(x_0,y_0)\in\reals^2$ with respect to $\Gamma$ is defined to 
be the number of intersection points between the downward vertical ray $\{(x_0,y)\in\RR^2 \mid y<y_0\}$ and the curves of $\Gamma$, counted with multiplicity (we assume that each curve in $\Gamma$ has finite intersection with every vertical line).
For each non-negative integer $k$, the {\em $k$-level} of $\A(\Gamma)$ is the closure of the locus of all points on the 
curves of $\Gamma$ whose level is exactly $k$. The $k$-level consists
of subarcs of curves from $\Gamma$ that are delimited 
either at vertices of $\A(\Gamma)$ or at points that lie above a locally $x$-extremal point of
some curve from $\Gamma$. The \emph{complexity} of the $k$-level is the number of subarcs that comprise the level.  

Combining the bound from Theorem~\ref{cuttingArcsIntoSegments} with a result of
Chan~\cite[Theorem 2.1]{Ch}, we obtain the following result. 

\begin{theorem}
\label{theo:level}
Let $\Gamma$ be a set of $n$ Jordan arcs, each of which is contained in an algebraic curve of degree at most $D$, and every pair of which have finite intersection. Then each level of $\A(\Gamma)$ has complexity
$O_D(n^{5/3}\log^{O_D(1)} n)$. 
\end{theorem}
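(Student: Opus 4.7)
The plan is to combine Theorem~\ref{cuttingArcsIntoSegments} with Chan's level-complexity bound for pseudo-segment arrangements in an essentially black-box fashion; all of the substantive combinatorial work has already been done by the cutting theorem. First I would apply Theorem~\ref{cuttingArcsIntoSegments} to $\Gamma$. Since each arc of $\Gamma$ lies on an algebraic curve of degree at most $D$, B\'ezout's theorem gives $X := \sum_{\gamma\neq\gamma'}|\gamma\cap\gamma'| = O_D(n^2)$, and the theorem produces a cutting $\Gamma'$ of $\Gamma$ into $\xi = O_D(n^{3/2}\log^{O_D(1)} n)$ Jordan arcs that form a pseudo-segment family. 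Because cutting an arc at interior points neither creates nor destroys crossings, $\A(\Gamma')$ has the same set of crossings as $\A(\Gamma)$, so the number of intersecting pairs of arcs in $\Gamma'$ is still at most $X = O_D(n^2)$.

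Next I would relate the $k$-level $\lambda$ of $\A(\Gamma)$ to the corresponding $k$-level $\lambda'$ of $\A(\Gamma')$. As subsets of the plane, $\lambda$ and $\lambda'$ trace the same locus, but $\lambda'$ may pick up extra breakpoints at those cut points that happen to lie on the level. Since there are only $O(\xi)$ cut points overall, the two complexities differ by at most $O(\xi) = O_D(n^{3/2}\log^{O_D(1)} n)$, comfortably below the target bound, so it suffices to bound the complexity of $\lambda'$.

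Finally I would invoke Chan's Theorem 2.1 from \cite{Ch}, which bounds the complexity of any level in an arrangement of $N$ pseudo-segments with $I$ intersecting pairs by (up to polylogarithmic factors) a quantity of the form $O(N^{2/3} I^{1/3} + N)$. Substituting $N = \xi$ and $I = O_D(n^2)$ yields
\[
 N^{2/3} I^{1/3} \;=\; O_D\!\left(\bigl(n^{3/2}\bigr)^{2/3}\bigl(n^2\bigr)^{1/3} \log^{O_D(1)} n \right) \;=\; O_D\bigl(n^{5/3}\log^{O_D(1)} n\bigr),
\]
as required. I do not anticipate a serious obstacle beyond what is already resolved by Theorem~\ref{cuttingArcsIntoSegments}; the main care-points are simply (i) verifying that Chan's result applies in the generality of (possibly non-$x$-monotone) pseudo-segments produced by our cutting, which is fine since the hypothesis of Theorem~\ref{theo:level} already assumes each arc meets every vertical line finitely often so levels are well defined, and (ii) checking that the $O(\xi)$ bookkeeping error from the cutting is absorbed into $O_D(n^{5/3}\log^{O_D(1)} n)$, which is immediate since $3/2 < 5/3$.
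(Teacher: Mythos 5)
Your overall route is the same as the paper's: cut $\Gamma$ into $O_D(n^{3/2}\log^{O_D(1)}n)$ pseudo-segments via Theorem~\ref{cuttingArcsIntoSegments}, note that the number of intersecting pairs is still $O_D(n^2)$ and that the extra breakpoints introduced by the cutting cost only $O_D(n^{3/2}\log^{O_D(1)}n)$, and then invoke Chan's level bound with $N^{2/3}X^{1/3}=O_D(n^{5/3})$. The arithmetic and the bookkeeping about cut points are fine.

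There is, however, a genuine gap in the way you invoke Chan. Chan's Theorem~2.1 (Theorem~\ref{chanThm1} here) bounds the level complexity by $O(N+N^{2/3}X^{1/3})$ only for \emph{extendible} pseudo-segments, i.e., $x$-monotone arcs each of which can be extended to a member of a family of pseudo-lines; it is not stated, and not known to hold directly, for arbitrary pseudo-segments. Your care-point (i) does not address this: the issue is not whether levels are well defined for the pieces produced by the cutting, but whether the structural hypothesis of Chan's theorem (extendibility, which in particular forces $x$-monotonicity) is met. The paper bridges exactly this gap with Chan's Theorem~3.3 (Theorem~\ref{chanThm2}): any collection of $N$ $x$-monotone pseudo-segments can be further cut into $O(N\log N)$ extendible pseudo-segments, after which Theorem~\ref{chanThm1} applies; the extra logarithm is absorbed into $\log^{O_D(1)}n$. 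You also need the pieces to be $x$-monotone before this step. This is essentially free: the cutting in Theorem~\ref{cuttingCurvesIntoSegments} already cuts each curve $Z(f)$ at the points where $\partial_y f=0$, so its output consists of $x$-monotone arcs, and in any case each piece lies on a degree-$\le D$ curve and can be made $x$-monotone with $O_D(1)$ additional cuts at locally $x$-extremal points without affecting the asymptotic count. With these two adjustments (pass to $x$-monotone pieces, then apply Theorem~\ref{chanThm2} before Theorem~\ref{chanThm1}), your argument becomes the paper's proof of Theorem~\ref{theo:level}.
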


Theorem \ref{theo:level} is proved in Section \ref{subsec:level}. It improves earlier results of Chan~\cite{Ch,Ch2} and Bien~\cite{lilach}
for the general algebraic case, and it almost matches the results
in \cite{ANPPSS,MT} for the case of pseudo-circles and pseudo-parabolas.

\subsection{Complexity of many marked faces in an arrangement}\label{complexityMarkedFacesIntroSec}

Let $\Gamma$ be a set of $n$ Jordan arcs, each of pair of which has finite intersection. Let $\P$ be a set of $m$ points in the plane with the property that no point of $\P$ lies on any curve of $\Gamma$. We define $K(\P,\Gamma)$ to be the sum of the complexities of the faces of $\A(\Gamma)$ that contain at least one point of $\P$, where the complexity of a face is the number of edges of $\A(\Gamma)$ 
on its boundary. Informally, this can be regarded as  an ``off-curve'' incidence
question, where instead of counting the number of curves each point intersects,
we (more or less) count the number of curves that the point can ``reach'' (without crossing other curves). The 
problem has been studied in the context of lines (see~\cite{SA}), segments, and
circles~\cite{AAS,ANPPSS,AS}. 

We will establish the following bound on the complexity of many marked faces:
\begin{theorem}[Complexity of many faces]\label{manyfacesCurvesWeakBd} 
Let $\Gamma$ be a set of $n$ Jordan arcs, each of which is contained in an algebraic curve of degree at most $D$, and every pair of which have finite intersection. Let $\P$ be a set of $m$ points in the plane, so that no point of $\P$ lies on any curve of $\Gamma$. Then 
\begin{equation} \label{weakptfaces}
K(\P,\Gamma) = O_D(m^{2/3}n^{2/3}+n^{3/2}\log^{O_D(1)}n).
\end{equation} 
\end{theorem}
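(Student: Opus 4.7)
The plan is to reduce the bound to its analogue for pseudo-segment arrangements via Theorem~\ref{cuttingArcsIntoSegments} and then apply a sharp, intersection-sensitive form of the classical many-faces theorem. Specifically, I would first apply Theorem~\ref{cuttingArcsIntoSegments} to cut the arcs of $\Gamma$ into a collection $\Gamma_0$ of $N = O_D(n^{3/2}\log^{O_D(1)}n)$ pseudo-segments. The cutting only introduces new degree-two vertices at the cut points; it neither destroys nor merges any face of $\A(\Gamma)$. Consequently every face of $\A(\Gamma)$ remains a face of $\A(\Gamma_0)$, and its boundary complexity can only (weakly) increase in passing from $\A(\Gamma)$ to $\A(\Gamma_0)$, so $K(\P,\Gamma)\le K(\P,\Gamma_0)$.

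Next, I would bound $K(\P,\Gamma_0)$ using an intersection-sensitive form of the classical many-faces theorem: for any collection of $N$ pseudo-segments with $I$ crossing pairs, the total complexity of $m$ marked faces is $O(m^{2/3}I^{1/3}+N)$. This form arises naturally from the Szemer\'edi--Trotter / Sz\'ekely-crossing-lemma-based proof of the Clarkson--Edelsbrunner--Guibas--Sharir--Welzl many-faces theorem, in which the parameter controlling the $m^{2/3}$ term is the number of arrangement vertices rather than the number of curves; for $n$ lines one has $I=\Theta(n^2)$ and recovers the usual $O(m^{2/3}n^{2/3}+n)$. Since cutting creates no new crossings and by B\'ezout's theorem each pair of curves in $\Gamma$ meets in at most $D^2$ points, we have $I\le\binom{n}{2}D^2 = O_D(n^2)$. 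Substituting yields
\[
K(\P,\Gamma)\le K(\P,\Gamma_0) = O\bigl(m^{2/3}I^{1/3}+N\bigr) = O_D\bigl(m^{2/3}n^{2/3}+n^{3/2}\log^{O_D(1)}n\bigr),
\]
as claimed.

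The main obstacle is invoking the correct intersection-sensitive formulation of the pseudo-segment many-faces bound. The naive version $O(m^{2/3}N^{2/3}+N)$, applied directly to $\Gamma_0$, would give only $O(m^{2/3}n\log^{O_D(1)}n+n^{3/2}\log^{O_D(1)}n)$, which is weaker by roughly a factor of $n^{1/3}$ in the first term; it is crucial that the proof exploit the fact that $\A(\Gamma_0)$ has only $O_D(n^2)$ true crossings, far fewer than the generic pseudo-segment maximum $\binom{N}{2}$. This refinement is essentially the Szemer\'edi--Trotter incidence bound applied to the arrangement vertices and the pseudo-segments of $\Gamma_0$, and is the same ingredient underlying the analogous pseudo-circle bound of Agarwal et al.~\cite{ANPPSS}.
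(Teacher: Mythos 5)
Your proposal is correct and follows essentially the same route as the paper: cut $\Gamma$ into $N=O_D(n^{3/2}\log^{O_D(1)}n)$ pseudo-segments via Theorem~\ref{cuttingArcsIntoSegments} and then invoke the intersection-sensitive many-faces bound for pseudo-segments with $X=O_D(n^2)$ crossings, which is exactly Theorem~3.5 of~\cite{AAS} as used in the paper. The only caveat is that the known pseudo-segment bound reads $O(m^{2/3}X^{1/3}+N\log^2 N)$ rather than your stated $O(m^{2/3}I^{1/3}+N)$; the extra polylogarithmic factor on the linear term is absorbed into $n^{3/2}\log^{O_D(1)}n$, so the final bound is unaffected.
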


Theorem \ref{manyfacesCurvesWeakBd} is obtained by using Theorem \ref{cuttingArcsIntoSegments} to cut the Jordan arcs into pseudo-segments and then applying existing techniques to this
collection of pseudo-segments. As discussed in Remark \ref{JordanArcsVsCurvesRem}, we could also state Theorem \ref{manyfacesCurvesWeakBd} for collections of algebraic curves (or collections
of connected subsets of algebraic curves) rather than Jordan arcs contained in algebraic curves. Doing so, however, makes the notation more complex without actually making the result any more general.

We prove Theorem \ref{incidencesPtsCurves} by using Theorem \ref{cuttingCurvesIntoSegments} to obtain a weak incidence bound and then amplifying the bound using further arguments. The bound in Theorem \ref{manyfacesCurvesWeakBd} is the analogue of the weak incidence bound which is the starting point for the proof of Theorem \ref{incidencesPtsCurves}. We attempted to amplify the bound in Theorem \ref{incidencesPtsCurves} as well, but we encountered several technical issues that we do not know how to overcome. In Section \ref{markedFacesDiscussionSec} we will comment
on these difficulties and leave such an improvement as an open problem.


%
%
\section{Cutting algebraic arcs into pseudo-segments} \label{sec:cut}

In this section we prove Theorems \ref{cuttingCurvesIntoSegments} and \ref{cuttingArcsIntoSegments}. 
\subsection{Some real algebraic geometry}\label{realAlgGeoSec}
Before we can proceed further, we will need some basic definitions from real algebraic geometry.  A real (resp., complex) affine algebraic
variety is the common zero locus of a finite set of polynomials over the real (resp., complex) numbers. If $Z\subset\RR^d$ is a real algebraic variety, we define $Z^*\subset\CC^d$ to be the smallest (complex) variety that contains $Z$. Unless otherwise noted, all varieties are assumed to be affine. Throughout the proof, we will work with both the Euclidean and Zariski topology\footnote{See \cite{Harris} for an introduction to the Zariski topology and related background.}. Unless specified explicitly, all open sets are assumed to be in the Euclidean topology.

Let $Z\subset \RR^d$ be a real algebraic variety. A crucial property of $Z$ will be its \emph{dimension}. The precise definition of
the dimension of a real algebraic variety is
slightly subtle, see, e.g., \cite{BCR}. Informally, however, the dimension of $Z$ is the largest integer $e$ so that $Z$
contains a subset homeomorphic to the open $e$-dimensional cube $(0,1)^e$. If $Z\subset\RR^d$ is a non-empty algebraic set of dimension $\leq
1$, we call it an \emph{algebraic curve}. 

The \emph{degree} of $Z\subset\RR^d$ is the degree of the complex variety $Z^*\subset\CC^d$; the latter is the sum of the degrees of the irreducible components of $Z^*$. See \cite{Harris} for further background and details.

Similarly, a real (resp., complex) \emph{projective} algebraic variety is the common zero locus of a finite set of homogeneous
polynomials. The dimension and degree of a real projective variety are defined analogously to the definitions for the affine case.

For a single polynomial $f\in\RR[x_1,\ldots,x_d]$, its zero locus $Z(f)=\{p\in\RR^d \mid f(p)=0\}$ 
is a real algebraic variety of degree at most $\deg(f)$.

\subsection{Cuttings}\label{cuttingsSection}
Let $\mathcal{S}$ be a collection\footnote{The terms ``collection'' and ``set'' mean the same thing; we use both merely to improve readability.} of
sets in $\RR^2$, each of which is contained in an algebraic curve and each pair of which have finite intersection. We say that $\mathcal{S}^\prime$ is a \emph{cutting} of $\mathcal{S}$
if $\mathcal{S}^\prime$ is a collection of pairwise disjoint connected sets, and for each $S\in\mathcal{S}$ there is a finite set $\mathcal{P}_S\subset S $ so that
\begin{equation*}
\mathcal{S}^\prime=\bigcup_{S\in\mathcal{S}}\{S^\prime \mid S^\prime\ \textrm{is a connected component of}\ S\backslash\mathcal{P}_S\}.
\end{equation*}
We say that $\sum_{S\in\mathcal{S}}|\mathcal{P}_S|$ is the number of cuts used in the cutting. Note that if $S^\prime\in \mathcal{S}^\prime$, then
either $S^\prime$ is a point, or there is a unique $S\in\mathcal{S}$ with $S^\prime\subset S$. In practice, we will throw away all isolated points, so
each $S^\prime\in\mathcal{S}$ will have (be contained in) a unique ``parent'' set $S\in\mathcal{S}$.

The following three results will help us control the number of connected components that are obtained by cutting an algebraic plane curve.
\begin{theorem}[Harnack~\cite{Harnack}]\label{HarnackCurveTheorem}
Let $f\in\RR[x,y]$ be a polynomial of degree $D$. Then $Z(f)$ contains at most $\frac12(D-1)(D-2) +1 \leq D^2$ connected components.
\end{theorem}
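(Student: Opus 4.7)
The plan is to prove this bound, due originally to Harnack, by combining the degree--genus formula with a Bezout-style intersection argument. First I would pass to projective coordinates: homogenize $f$ to a degree-$D$ form $F(X,Y,Z)$ and consider the real projective curve $V = Z(F) \subset \RR P^2$. Every connected component of $Z(f) \subset \RR^2$ sits inside a connected component of $V$, so it suffices to bound the number of components of $V$.

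Next I would reduce to the smooth case via perturbation. For a generic real form $G$ of degree $D$ and small $\varepsilon > 0$, the complex projective curve $V_\varepsilon^\ast \subset \CC P^2$ defined by $F + \varepsilon G = 0$ is smooth. A local analysis at each real singular point of $V$ (via Puiseux expansions or a real Morse-theoretic smoothing) shows that the number of connected components of $V_\varepsilon \subset \RR P^2$ is at least that of $V$, so it suffices to prove the bound for smooth real curves. Under smoothness, the degree--genus formula gives that $V^\ast$ has genus $g = (D-1)(D-2)/2$.

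The core step is Harnack's classical argument. Suppose for contradiction that $V$ has $M \geq g+2$ connected components (``ovals'') $O_1, \dots, O_M$, where $O_1$ is chosen to be null-homotopic in $\RR P^2$ (at most one component, occurring only when $D$ is odd, can fail to be null-homotopic, so for $M \geq 2$ such a choice exists). Pick $D-3$ points on $O_1$ and one point on each of $O_2, \dots, O_M$, for a total of $D + M - 4$ points. Since
\[ D + M - 4 \;\leq\; D + g - 2 \;=\; \binom{D}{2} - 1, \]
which is the projective dimension of the space of real plane curves of degree $D-2$, there exists a nonzero real curve $W$ of degree $D-2$ passing through all chosen points. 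Each null-homotopic oval is a Jordan curve in $\RR P^2$, so $W$ must meet it in an even number of real points (counted with multiplicity); combined with the forced incidences, a short arithmetic check yields at least $2(M-1) + (D-3) = (D-1)^2$ real intersection points between $V$ and $W$. But by Bezout's theorem $|V^\ast \cap W^\ast| \leq D(D-2) = (D-1)^2 - 1$, a contradiction. Hence $M \leq g+1 = \tfrac{1}{2}(D-1)(D-2)+1$, as claimed.

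The main obstacle, and the most technical step, is the perturbation argument in the second paragraph: one must verify that a small generic smoothing of a real plane curve can only increase or preserve the number of Euclidean connected components of its real points. This requires classifying the local behavior at each real singularity (including high-multiplicity real singularities, isolated real points of $Z(f)$ that are in fact complex double points, and non-isolated singular arcs in the reducible case) and checking that a generic smoothing never merges two components. All the ingredients are classical, but getting the bookkeeping right for arbitrary singularities is the most delicate part of the proof. An alternative route that sidesteps some of these issues is to first factor $f$ into irreducible factors and bound each irreducible component separately, reducing to the case where $V^\ast$ is irreducible.
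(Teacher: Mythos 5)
The paper offers no proof of this statement---it is quoted directly from Harnack---so what matters is whether your reconstruction of the classical argument is sound, and as written it has three genuine gaps. First, the opening reduction is invalid: distinct connected components of $Z(f)\subset\RR^2$ can lie on the \emph{same} component of the projective curve $V\subset\RR P^2$ (the hyperbola $xy=1$ is a single oval in $\RR P^2$ but has two affine branches), so bounding the components of $V$ does not bound those of $Z(f)$. Indeed this example shows the sharp affine bound $\tfrac12(D-1)(D-2)+1$ fails already for $D=2$; a correct affine statement must add the at most $D$ splittings caused by the line at infinity, which still yields the $\le D^2$ bound that the paper actually uses (the paper explicitly only needs $O_D(1)$). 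Second, your point allocation does not produce a contradiction in the odd-degree case: if the non-null-homotopic component (pseudo-line) is among $O_2,\dots,O_M$, it contributes only one guaranteed intersection with $W$, so the count is $2(M-2)+1+(D-3)=D(D-2)$, exactly the B\'ezout bound. The standard fix is the opposite allocation---one point on each of $g+1$ ovals and the $D-3$ extra points on the remaining, possibly non-null-homotopic, component---which gives $2(g+1)+(D-3)=(D-1)^2>D(D-2)$. (Relatedly, your displayed inequality $D+M-4\le D+g-2$ is reversed when $M>g+2$; one should first restrict attention to exactly $g+2$ components.)

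Third, the smoothing step is not merely ``delicate bookkeeping'': the claim that a \emph{generic} small perturbation never decreases the number of real components is false. An isolated real point, e.g.\ $Z(x^2+y^2)$, is destroyed whenever $\varepsilon G$ has the wrong sign there, which happens for generic $G$; so one must choose the sign of the perturbation locally (and handle non-square-free and reducible $f$, e.g.\ by first passing to the square-free part), or else treat isolated real points by a normalization/genus count rather than by smoothing. If you only need the statement at the level of precision the paper uses ($O_D(1)$, in fact $\le D^2$), there is a much shorter route that bypasses both problematic reductions: the Oleinik--Petrovsky--Milnor--Thom type bounds on connected components of sign conditions (see \cite{BPR}, already cited in the paper) give $O(D^2)$ components of $Z(f)$ directly, with no smoothing and no passage to $\RR P^2$; the sharp constant requires the full classical argument with the corrections indicated above.
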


\begin{lemma}[Removing a point from a curve]\label{removePtFromSet}
Let $f\in\RR[x,y]$ be a polynomial of degree $D$, let $\gamma\subset Z(f)$ be a connected set, and let $p\in\gamma$. Then $\gamma\backslash\{p\}$ contains at most $D$ connected components. 
\end{lemma}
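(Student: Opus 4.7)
The plan is to reduce the problem to a local analysis of $Z(f)$ near $p$ and bound the number of local ``arms'' of $Z(f)$ at $p$ via B\'ezout's theorem applied to a small circle centered at $p$.

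First, since $\gamma$ is connected and $p\in\gamma$, standard topological arguments (separation of a connected set) show that each connected component $C$ of $\gamma\setminus\{p\}$ must have $p$ as a limit point; otherwise $C$ would be clopen in $\gamma$, contradicting connectedness. In particular, every such $C$ meets the open ball $B_\varepsilon(p)$ for every $\varepsilon>0$.

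Next, I would invoke the local conic structure theorem for semi-algebraic sets: for all sufficiently small $\varepsilon>0$, the pair $(Z(f)\cap\overline{B}_\varepsilon(p),\{p\})$ is homeomorphic to the cone with apex $p$ on $Z(f)\cap S_\varepsilon(p)$, where $S_\varepsilon(p)$ is the circle of radius $\varepsilon$ centered at $p$. Consequently, $(Z(f)\cap B_\varepsilon(p))\setminus\{p\}$ decomposes into exactly $m:=|Z(f)\cap S_\varepsilon(p)|$ connected components, which I will call the \emph{local arms} $A_1,\ldots,A_m$ of $Z(f)$ at $p$; each $A_i$ is homeomorphic to a half-open arc whose unique limit point is $p$. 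Choosing $\varepsilon$ small enough that $S_\varepsilon(p)$ has no irreducible component in common with $Z(f)$ (which excludes only finitely many values of $\varepsilon$), B\'ezout's theorem applied to the degree-$D$ curve $Z(f)$ and the degree-$2$ curve $S_\varepsilon(p)$ bounds $m\le 2D$. Finally, because each $A_i$ is half-open with $p$ as its only limit point, at most one connected piece $\pi_i$ of $\gamma\cap A_i$ can accumulate at $p$; each component $C$ of $\gamma\setminus\{p\}$ accumulates at $p$ and hence meets (so contains) some such $\pi_i$, and distinct components of $\gamma\setminus\{p\}$ correspond to distinct indices $i$. This yields at most $m\le 2D$ components.

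The main technical input is the local conic structure theorem, which is a standard fact for semi-algebraic sets; the combinatorial matching of components to arms afterwards is elementary. I expect the principal obstacle to be closing the gap between the constant $2D$ produced by this approach and the stated $D$: each analytic branch of $Z(f)$ at $p$ generically contributes two opposite real arms, and there are at most $\operatorname{mult}_p Z(f)\le D$ such branches, so sharpening to exactly $D$ likely requires pairing opposite arms through the local branch structure, or replacing the small circle by a generic line through $p$ and performing a more careful B\'ezout count that subtracts off the intersection multiplicity at $p$.
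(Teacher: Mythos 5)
Your strategy is essentially the one the paper has in mind: the paper gives no detailed proof of this lemma, only the remark that near $p$ the curve consists of boundedly many local branches (with a pointer to Lemmas 4.5--4.7 of \cite{Zahl}), and your combination of the local conic structure theorem with a B\'ezout count against a small circle is the standard way to make that remark precise. The most important thing to tell you is that you should \emph{not} try to close the gap between your $2D$ and the stated $D$: the constant $D$ in the statement is in fact not attainable. Take $f=xy$ (so $D=2$), $\gamma=Z(f)$ and $p$ the origin; then $\gamma\setminus\{p\}$ has $4=2D$ components, and more generally $D$ concurrent lines give $2D$ components. So $2D$ is the sharp bound of this type, the paper's stated constant is a slip, and the paper itself notes immediately after the lemma that only an $O_D(1)$ bound matters for all of its applications. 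Your worry in the last paragraph is therefore moot.

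Two of your topological steps are stated more quickly than they can be justified for an \emph{arbitrary} connected subset $\gamma\subset Z(f)$, and both need (easy) input from the local structure of $Z(f)$. First, ``otherwise $C$ would be clopen'' requires that components of $\gamma\setminus\{p\}$ be open in $\gamma$, i.e.\ some local connectedness of $\gamma$; this is not automatic for an arbitrary connected subset and must be extracted from the arm structure, e.g.\ by noting that a portion of an arm that is ``landlocked'' between two points of $A_i\setminus\gamma$ is clopen in $Z(f)$ minus those two points, hence cannot contain a proper nonempty part of $\gamma$. Second, ``distinct components of $\gamma\setminus\{p\}$ correspond to distinct indices $i$'' needs an argument: if $C_1\neq C_2$ both had points on the same arm $A_i$ arbitrarily close to $p$, choose $b\in C_2\cap A_i$ lying (in the arm parameter) between two points of $C_1\cap A_i$; the open sub-arc of $A_i$ strictly between $p$ and $b$ is clopen in $Z(f)\setminus\{p,b\}$, which contains the connected set $C_1$, a contradiction. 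These are exactly the kinds of details supplied by the cited lemmas of \cite{Zahl}; with them, your argument is complete and yields the (correct, sharp) bound $2D$.
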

The idea behind Lemma \ref{removePtFromSet} is that in a small neighborhood of $p$, $\gamma$ is a union of at most $D$ ``branches,'' and
removing $p$ can cut these branches into separate connected components. See, e.g., Lemmas 4.5, 4.6, and 4.7 from \cite{Zahl} for details. Note that for the purposes of this paper, the exact bounds in Theorem \ref{HarnackCurveTheorem} and Lemma \ref{removePtFromSet} are not important; all that matters is that the quantities are $O_D(1).$

\begin{lemma}[Cutting a curve into Jordan arcs]\label{cuttingCurveJordanArcs}
Let $f\in\RR[x,y]$ be a square-free polynomial. Then $Z(f)\backslash Z(\partial_y f)$ is a union of disjoint Jordan arcs.
\end{lemma}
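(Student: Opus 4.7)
The plan is to combine the implicit function theorem with the classification of one-dimensional topological manifolds. First, I would establish the local structure of $Z(f)\setminus Z(\partial_y f)$: at every point $p=(x_0,y_0)$ in this set we have $\partial_y f(p)\neq 0$, so the implicit function theorem furnishes an open neighborhood $U\subset\RR^2$ of $p$ on which $\partial_y f$ is nowhere zero and on which $Z(f)\cap U$ coincides with the graph of a smooth function $y=g(x)$ for $x$ in some open interval about $x_0$. In particular, a neighborhood of $p$ inside $Z(f)\setminus Z(\partial_y f)$ is homeomorphic to an open interval, so $Z(f)\setminus Z(\partial_y f)$ is a one-dimensional topological manifold without boundary embedded in $\RR^2$.

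Next, I would invoke the classification of connected one-manifolds without boundary: every connected component of $Z(f)\setminus Z(\partial_y f)$ is homeomorphic either to the open interval $(0,1)$ or to the circle $S^1$. To exclude the $S^1$ case, fix a component $\gamma$ and consider the projection $\pi_x\colon\gamma\to\RR$ onto the $x$-axis. The local graph description above shows that for each $p\in\gamma$ with $x$-coordinate $x_0$, the image $\pi_x(\gamma)$ contains an open interval around $x_0$; thus $\pi_x(\gamma)$ is a nonempty open subset of $\RR$, which is not compact. Since the continuous image of a compact space is compact, $\gamma$ cannot be compact, and so it cannot be homeomorphic to $S^1$. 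Consequently, every component is homeomorphic to $(0,1)$, i.e., is a Jordan arc, and distinct components are automatically pairwise disjoint.

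The square-free hypothesis does not enter the local argument directly; its role is to ensure that $Z(f)$ is not simply swallowed up by $Z(\partial_y f)$ (indeed, if $g^2\mid f$ then $g\mid\partial_y f$, so $Z(g)\subseteq Z(\partial_y f)$, giving the vacuous statement). I expect the main conceptual point, and the step requiring the most care, to be ruling out closed-loop components of $Z(f)\setminus Z(\partial_y f)$; once that is handled via the openness of the $x$-projection on each component, the rest is an essentially formal consequence of the implicit function theorem and the classification of one-manifolds. If desired, Harnack's theorem together with Lemma~\ref{removePtFromSet} can additionally be applied to observe that only $O_D(1)$ such Jordan arcs arise, though the statement itself does not require this quantitative bound.
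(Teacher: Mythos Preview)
Your proposal is correct and follows essentially the same approach as the paper: invoke the implicit function theorem to see that $Z(f)\setminus Z(\partial_y f)$ is a one-manifold, classify its components as arcs or circles, and rule out circles via the $x$-projection. The only cosmetic difference is that the paper excludes circles by locating a leftmost point (where $\partial_y f$ would have to vanish), whereas you argue that the $x$-projection of a component is open in $\RR$ and hence non-compact; these are two phrasings of the same observation.
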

\begin{proof}
First, note that by the implicit function theorem, $Z(f)\backslash Z(\partial_y f)$ is a one-dimensional manifold. By the classification of
one-dimensional manifolds, we conclude that each connected component of $Z(f)\backslash Z(\partial_y f)$ is either a Jordan arc (i.e., homeomorphic to
the interval $(0,1)$), or is homeomorphic to a circle. Suppose that a connected component $\gamma\subset Z(f)\backslash Z(\partial_y f)$  is
homeomorphic to a circle. Since $\gamma\subset\RR^2$ is compact, there exists a point $(x_0,y_0)\in\gamma$ with $x_0 = \min\{x\mid (x,y)\in\gamma\}.$
We must have $\partial_y f(x_0,y_0)=0$, which contradicts the fact that $\gamma\subset Z(f)\backslash Z(\partial_y f)$. 
\end{proof}

Lemmas \ref{HarnackCurveTheorem} and \ref{removePtFromSet} imply that when a collection of algebraic curves is cut, the number of sets in the new
collection is controlled by the number of curves in the original collection and the number of cuts made in the cutting. This is made precise in the following lemma.
\begin{lemma}\label{cutsVsComponents}
Let $\mathcal{C}$ be a set of algebraic curves and let $\Gamma_0$ be a cutting of $\mathcal{C}$. Suppose that $\ell$ cuts are used in the cutting. Then $|\Gamma_0|\leq D^2|\C|+D\ell$.
\end{lemma}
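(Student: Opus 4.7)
The plan is to analyze each curve $C\in\mathcal{C}$ separately, start from the initial decomposition of $C$ into connected components, and then track how the component count grows as we remove the cut points one by one. Writing $\mathcal{P}_C\subset C$ for the set of cut points applied to $C$, we have $\sum_{C\in\mathcal{C}}|\mathcal{P}_C|=\ell$, and the elements of $\Gamma_0$ obtained from $C$ are exactly the connected components of $C\setminus\mathcal{P}_C$, so it suffices to bound the number of such components by $D^2+D|\mathcal{P}_C|$ and sum over $C$.

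The first step is to control the initial number of components of each $C$. Since $C$ is an algebraic plane curve of degree at most $D$, it equals $Z(f)$ for some polynomial $f\in\RR[x,y]$ of degree at most $D$, so by Harnack's theorem (Theorem~\ref{HarnackCurveTheorem}), $C$ has at most $\tfrac{1}{2}(D-1)(D-2)+1\le D^2$ connected components. This handles the case $|\mathcal{P}_C|=0$.

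The second step is to account for the effect of the cuts. I would remove the points of $\mathcal{P}_C$ from $C$ one at a time. At every stage, the current set is a disjoint union of connected subsets of $Z(f)$. When we remove a single point $p$, it lies in exactly one of these components, call it $\gamma$; the other components are untouched. By Lemma~\ref{removePtFromSet} applied to $\gamma\subset Z(f)$, the set $\gamma\setminus\{p\}$ has at most $D$ connected components, which means that removing $p$ replaces one component by at most $D$ components, increasing the total count by at most $D-1\le D$. Iterating this $|\mathcal{P}_C|$ times shows that the number of connected components of $C\setminus\mathcal{P}_C$ is at most $D^2+D|\mathcal{P}_C|$.

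Summing over all $C\in\mathcal{C}$ gives
\[
|\Gamma_0|\;=\;\sum_{C\in\mathcal{C}}\#\bigl(\text{components of }C\setminus\mathcal{P}_C\bigr)\;\le\;\sum_{C\in\mathcal{C}}\bigl(D^2+D|\mathcal{P}_C|\bigr)\;=\;D^2|\mathcal{C}|+D\ell,
\]
as required. The argument is essentially bookkeeping on top of the two cited facts, so there is no genuine obstacle; the only subtlety is to iterate Lemma~\ref{removePtFromSet} correctly, by noting that each removed point affects only the single connected component containing it, which is itself a connected subset of $Z(f)$ and so is still eligible input for the lemma.
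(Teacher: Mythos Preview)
Your proof is correct and is exactly the argument the paper has in mind: the paper does not spell out a proof of Lemma~\ref{cutsVsComponents} but simply says it follows from Theorem~\ref{HarnackCurveTheorem} and Lemma~\ref{removePtFromSet}, which is precisely what you do---bound the initial number of components of each curve by $D^2$ via Harnack, then iterate Lemma~\ref{removePtFromSet} to show each cut increases the component count by at most $D$.
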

%

\subsection{Lenses}
Let $\gamma$ and $\gamma^\prime$ be Jordan arcs. We say that $\gamma$ and $\gamma^\prime$ form a \emph{lens} if
$\RR^2\backslash(\gamma\cup\gamma^\prime)$ consists of at least two connected components. We say that $\gamma$ 
and $\gamma^\prime$ form a \emph{proper lens} if $\RR^2\backslash(\gamma\cup\gamma^\prime)$ consists of exactly two connected components. 
If $\lambda$ and $\lambda^\prime$ form a lens, then we can always find two connected subarcs $\delta\subseteq\gamma$ and $\delta'\subseteq\gamma'$ with two common endpoints. If the lens is proper, then the sets $\delta$ and $\delta^\prime$ are unique, and the relative interiors of $\delta$ and $\delta'$ are disjoint. See Figure~\ref{fig:lenses}. We will abuse notation slightly and will also refer to the pair $(\delta,\delta^\prime)$ as the lens.

\begin{figure}[hbpt]
  \centering
 \input{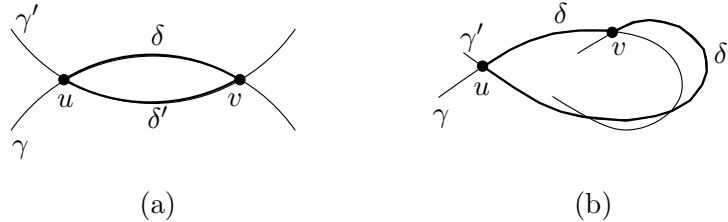}
  \caption{(a) $\gamma$ and $\gamma'$ form a proper lens that consists of the subarcs $\delta$ and $\delta'$. 
  (b) The lens formed by $\gamma$ and $\gamma'$ with endpoints $u,v$ is not proper.}
  \label{fig:lenses}
\end{figure}

Let $\Gamma$ be a set of Jordan arcs. We say that $\Gamma$ is \emph{lens-free} if no two curves from $\Gamma$ form a lens. $\Gamma$ is lens-free if and only if the curves in $\Gamma$ are a set of pseudo-segments.

\subsection{Lifting plane curves to space curves}
In this section we will describe a process adopted from Ellenberg, Solymosi and Zahl~\cite{ESZ}
that transforms a plane curve into a space curve, so that the ``slope'' of the plane curve is encoded
as the $z$--coordinate of the space curve. If $C$ is a plane curve and $(x,y)$ is a smooth point of $C$, we define the slope of $C$ at $(x,y)$
to be the slope of the tangent line to $C$ at $(x,y)$. If this line is vertical, then we say that the slope is infinite.
\begin{lemma}\label{liftOfACurve}
Let $C$ be an irreducible algebraic curve in $\RR^2$ of degree at most $D$. Then there is an irreducible space curve $\hat C\subset\RR^3$ with the following property: if $(x,y,z)\in \hat C$ and if $(x,y)$ is a smooth point of $C$ where the slope is finite, then $z$ is the slope of $C$ at the point $(x,y)$. Furthermore, the degree of $\hat C$ is at most $D^2$.
\end{lemma}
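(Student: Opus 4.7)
My plan is to define $\hat C$ explicitly as the Zariski closure of the ``graph of the slope function'' on $C$, and then to bound its degree by B\'ezout. Since $C$ is irreducible I may write $C = Z(f)$ for an irreducible polynomial $f \in \RR[x,y]$ of degree at most $D$. Viewing $f$ as an element of $\RR[x,y,z]$ that happens not to involve $z$, define
\[
g(x,y,z) := z\, f_y(x,y) + f_x(x,y),
\]
so $\deg g \le D$ (since $\deg f_x, \deg f_y \le D-1$). At any smooth point $(x,y) \in C$ with $f_y(x,y) \neq 0$, the implicit function theorem shows that the tangent slope of $C$ is $-f_x(x,y)/f_y(x,y)$. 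Let $U \subset C$ be the Zariski-dense open subset obtained by removing the finite set of singular points of $C$ together with the finite set $C \cap Z(f_y)$, and let $\hat C \subset \RR^3$ be the Zariski closure of the image of the map $\phi \colon U \to \RR^3$, $(x,y) \mapsto (x,y, -f_x(x,y)/f_y(x,y))$.

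With this definition the slope property is immediate: every point of $\phi(U)$ lies in $Z(f) \cap Z(g)$, hence so does every point of $\hat C$; and if $(x,y,z) \in \hat C$ with $(x,y)$ smooth on $C$ and $f_y(x,y) \neq 0$, then the equation $g(x,y,z) = 0$ forces $z = -f_x(x,y)/f_y(x,y)$, which is the slope at $(x,y)$. Irreducibility of $\hat C$ follows because $U$ is a Zariski-dense open subset of the irreducible variety $C$, hence the image $\phi(U)$ is irreducible, and the Zariski closure of an irreducible set is irreducible.

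For the degree bound I would pass to the complexification. First, since $f$ is irreducible in $\CC[x,y]$ and independent of $z$, it remains irreducible in $\CC[x,y,z]$, so the cylinder $Z(f) \subset \CC^3$ is an irreducible surface of degree $D$. In the nontrivial case $f_y \not\equiv 0$, the polynomial $g$ genuinely involves $z$, so it is not divisible by $f$; hence $Z(f) \cap Z(g) \subset \CC^3$ is a proper intersection of dimension $1$. Applying B\'ezout's theorem to the projective closures of the two surfaces, of degrees $D$ and at most $D$, gives that this intersection curve has total degree at most $D \cdot D = D^2$, and since $\hat C^* \subseteq Z(f) \cap Z(g)$ we conclude $\deg \hat C \le D^2$. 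The remaining edge cases are easily disposed of: if $f_y \equiv 0$ then $f$ is a polynomial in $x$ only, which combined with irreducibility forces $f = x-a$, so $C$ is a vertical line with infinite slope everywhere and the implication in the lemma is vacuous (any irreducible space curve projecting onto $C$ will do); and if $f$ splits into complex-conjugate factors in $\CC[x,y]$ then $C$ has only finitely many real points and the statement is again trivial.

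The main obstacle I foresee is bookkeeping the real-versus-complex distinction. The cleanest form of B\'ezout lives in complex projective space, whereas the statement of the lemma is about real affine curves, so one has to verify that taking the Zariski closure over $\RR$ of the image of $\phi$ produces a real variety whose complexification is precisely the irreducible $1$-dimensional component of $Z(f) \cap Z(g)$ that we need, and that this complexification is where the B\'ezout bound is applied. A secondary technical point is confirming that the B\'ezout bound survives the passage to projective closure without degree inflation.
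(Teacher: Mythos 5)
Your construction is exactly the one the paper (following Ellenberg--Solymosi--Zahl, Proposition 1) uses: you take the non-vertical component of $\{f=0,\ z\,\partial_y f+\partial_x f=0\}$, realized as the Zariski closure of the graph of the slope function, and bound its degree by B\'ezout applied to the two degree-$\le D$ surfaces. The argument is correct and essentially identical to the paper's proof, with your direct irreducibility and edge-case remarks filling in details the paper delegates to the cited reference.
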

This is \cite[Proposition 1]{ESZ}. In brief, let $f$ be an irreducible polynomial satisfying $C=Z(f)$, and consider the algebraic variety 
\begin{equation*}
\big\{(x,y,z) \mid f(x,y)=0,\; z\partial_y f(x,y) + \partial_x f(x,y)  = 0 \big\} .
\end{equation*}

As discussed in \cite[\S3.3]{ESZ}, this variety is a union of vertical lines (one line above each singular point of $C$), plus an irreducible curve that is not a vertical line.
This curve is $\hat C$. Its degree is $\le D^2$, since it is an irreducible component of the intersection of two surfaces of degree at most $D$.
See \cite{ESZ} for details.\footnote{%
  Note that regular points of $C$ with vertical tangency are not part of the projection of $\hat C$. For example, if $C$ is the circle
  $x^2+y^2=1$, $\hat C$ is the space curve given by $x^2+y^2=1$ and $x+yz=0$, and its $xy$-projection does not contain the points $(1,0)$ or $(-1,0)$.}

\begin{remark}
Note that if $(x,y)$ is a smooth point of $C$ with finite slope, then the $z$-vertical line passing through $(x,y)$ intersects $\hat C$ in
exactly one point. Thus if $\gamma\subset C$ is a Jordan arc consisting of smooth points with finite slope, then there is a unique space Jordan
arc $\hat \gamma\subset \hat C$ satisfying $\pi(\hat \gamma)=\gamma$, where $\pi(x,y,z)=(x,y)$. 
We will exploit this observation by cutting $C=Z(f)$ at each point where $\partial_yf$ vanishes. By B\'ezout's theorem and Lemma \ref{removePtFromSet}, this process cuts $C$ into $O_D(1)$ connected pieces, so that the interior of each piece consists exclusively
of smooth points with finite slope.
\end{remark}

\subsection{Depth cycles and lenses}

Let $\gamma$ and $\gamma^\prime$ be Jordan space arcs in $\RR^3$. We say that $\gamma$ and $\gamma^\prime$ form a \emph{depth cycle of length two} if there are
points $(x_1,y_1,z_1),(x_2,y_2,z_2)\in\gamma,$ $(x_1,y_1,z_1^\prime),(x_2,y_2,z_2^\prime)\in\gamma^\prime$, so that $(x_1,y_1)\neq (x_2,y_2)$, $z_1\geq z_1^\prime$, and $z_2\leq z_2^\prime.$
This depth cycle is characterized by the tuple $(\gamma,\gamma^\prime, x_1,y_1,z_1,z_1^\prime, x_2,y_2,z_2,z_2^\prime)$. If $z_1>z_1^\prime$ and $z_2<z_2^\prime$, we call the depth cycle a
\emph{proper} depth cycle (of length two).

Let $\Gamma$ be a set of Jordan space arcs in $\RR^3$. We say that $\Gamma$ has no depth cycles 
of length two (resp., no proper depth cycles of length two) if no pair of curves in $\Gamma$ have a depth cycle of length two (resp., a proper depth cycle of length two).

\begin{lemma}\label{lensesAndDepthCycles}
Let $C$ and $C^\prime$ be plane algebraic curves. Let $\gamma\subset C$ and $\gamma^\prime\subset C^\prime$ be $x$-monotone Jordan arcs consisting of smooth points with finite slope, and suppose that $\gamma$ and $\gamma^\prime$ form a lens. Then $\hat \gamma$ and $\hat \gamma^\prime$ form a depth cycle of length two. 
\end{lemma}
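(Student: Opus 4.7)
The plan is to exploit $x$-monotonicity to write the two sides of the lens as graphs of real-valued functions of $x$, and then read off the inequalities required by the depth-cycle definition from elementary one-sided derivative considerations at the shared endpoints of the lens.

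First I would unpack what it means to have a lens. By hypothesis there exist subarcs $\delta \subseteq \gamma$ and $\delta' \subseteq \gamma'$ sharing two common endpoints $u = (x_1,y_1)$ and $v = (x_2,y_2)$. Two $x$-monotone arcs lying on the same algebraic curve are either disjoint or one extends the other, and in either case cannot form a lens; so I may assume $C \neq C'$, and then B\'ezout's theorem makes $\gamma \cap \gamma'$ finite. Between two consecutive points of $\gamma \cap \gamma'$ on $\delta \cup \delta'$, the corresponding sub-subarcs again share only their endpoints, so after shrinking I may assume the lens $(\delta,\delta')$ is proper, i.e.\ $\delta \cap \delta' = \{u,v\}$.

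Next I would set up the analytic comparison. Since $\gamma$ and $\gamma'$ are $x$-monotone, we have $x_1 \neq x_2$; take $x_1 < x_2$. Then $\delta$ and $\delta'$ are graphs of continuous functions $f,g \colon [x_1,x_2] \to \RR$ with $f(x_1)=g(x_1)=y_1$ and $f(x_2)=g(x_2)=y_2$. Because every point of $\gamma$ and $\gamma'$ is a smooth point of the underlying algebraic curve with finite slope, the implicit function theorem gives that $f$ and $g$ are $C^1$ (with one-sided derivatives at $x_1$ and $x_2$), and the derivative at each $x$ coincides with the slope of the underlying curve at the corresponding point --- which is precisely the $z$-coordinate of the corresponding point on the lift $\hat\gamma$ or $\hat\gamma'$.

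The conclusion then follows from a calculus check on $h = f - g$. Properness gives $h \neq 0$ on $(x_1,x_2)$, while $h(x_1)=h(x_2)=0$, so $h$ has constant sign on $(x_1,x_2)$; after possibly swapping the roles of $\gamma$ and $\gamma'$ I may assume $h > 0$ there. The one-sided derivatives at the endpoints of an interval on which $h$ vanishes at the endpoints and is positive in the interior satisfy $h'(x_1) \geq 0$ and $h'(x_2) \leq 0$, i.e.\ $f'(x_1) \geq g'(x_1)$ and $f'(x_2) \leq g'(x_2)$. Setting $z_1=f'(x_1)$, $z_1'=g'(x_1)$, $z_2=f'(x_2)$, $z_2'=g'(x_2)$, the points $(x_1,y_1,z_1),(x_2,y_2,z_2)\in \hat\gamma$ and $(x_1,y_1,z_1'),(x_2,y_2,z_2')\in \hat\gamma'$ with $(x_1,y_1)\neq(x_2,y_2)$, $z_1\geq z_1'$, and $z_2\leq z_2'$ exhibit the required depth cycle of length two. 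The only mildly delicate step is the reduction to a proper lens; once that is in hand, the rest is one-variable calculus applied to the slope-difference $h$.
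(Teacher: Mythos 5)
Your proof is correct and follows essentially the same route as the paper: reduce to a proper lens, use $x$-monotonicity to write the two sides of the lens as graphs $f,g$ over $[x_1,x_2]$, and read the depth cycle off the signs of $(f-g)'$ at the two endpoints, whose values are exactly the $z$-coordinates of the lifted arcs. The only cosmetic difference is that by using one-sided derivatives and the weak inequalities $z_1\ge z_1'$, $z_2\le z_2'$ you absorb into one argument the tangency case that the paper treats separately.
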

\begin{proof}
By shrinking $\gamma$ and $\gamma^\prime$ if necessary, we can assume that $\gamma$ and $\gamma^\prime$ intersect at exactly two points 
(since the arcs are $x$-monotone, this is equivalent to them forming a proper lens)---call these points $p$
and $q$. In particular, $\RR^2\backslash (\gamma\cup\gamma^\prime)$ has exactly two connected components, exactly one of which is unbounded. Call the bounded component the ``inside'' of
$\gamma\cup\gamma^\prime$. Since $\gamma$ and $\gamma^\prime$ are $x$-monotone, every $y$-vertical line intersects each of $\gamma$ and $\gamma^\prime$ at most once. In particular, every vertical line that intersects the inside of $\gamma\cup\gamma^\prime$ must intersect each of $\gamma$ and $\gamma^\prime$ at precisely one point. By renaming the indices if necessary, we can assume that if $\ell$ is a vertical line that intersects the inside of $\gamma\cup\gamma^\prime$, then the intersection of $\ell$ with $\gamma$ has larger $y$-coordinate than the intersection of $\ell$ with $\gamma^\prime$. 

\begin{figure}[hbpt]
  \centering
 \input{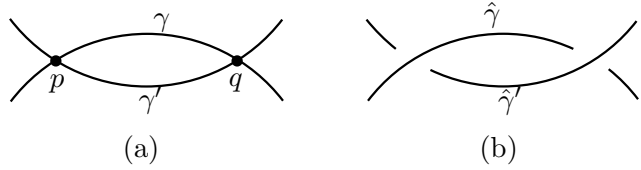}
  \caption{(a) $\gamma$ and $\gamma'$ form a lens in the $xy$-plane. (b) The respective lifted images $\hat{\gamma}$, $\hat\gamma'$ of $\gamma$, $\gamma'$
  form a depth cycle of length two.}
  \label{fig:lift}
\end{figure}

Suppose that $\gamma$ and $\gamma^\prime$ are tangent at $p=(x,y)$. Then $\hat\gamma$ and $\hat\gamma^\prime$ intersect at the point $(x,y,z)$, where
$z$ is the slope of both curves at $p$. By interchanging the indices if necessary, we can assume that at the lifting of $q$, either $\hat\gamma$ and $\hat\gamma^\prime$ intersect, or $\hat\gamma$ has larger $z$-coordinate. In either case, $\hat\gamma$ and $\hat\gamma^\prime$ form a
depth cycle of length two. An identical argument can be used if $\gamma$ and $\gamma^\prime$ are tangent at $q$. 

Suppose then that $\gamma$ and $\gamma^\prime$ are not tangent at $p$ or at $q$. For each $x$ that lies in the $x$-projection of the inside of $\gamma\cup\gamma^\prime$, let $f_1(x)$ (resp., $f_2(x)$) be the $y$-coordinate of the intersection of $\gamma$ (resp., $\gamma'$)
with the vertical line passing through $(x,0)$. Let $f(x)=f_1(x)-f_2(x)$. Let $p=(x_1,y_1)$, $q=(x_2,y_2)$. Then $f(x_1)=f(x_2)=0$, and $f(x)>0$ for
$x_1<x<x_2$. Furthermore, $f(x)$ is smooth, and $\frac{df}{dx}(x_1)\neq 0,$ $\frac{df}{dx}(x_2)\neq 0.$ We conclude that $\frac{df}{dx}(x_1)>0$ and
$\frac{df}{dx}(x_2)<0$, i.e., the slope of $\gamma$ at $p$ is larger than that of $\gamma^\prime$, and the slope of $\gamma$ at $q$ is smaller than that of $\gamma^\prime$. We conclude that $\hat\gamma$ and $\hat\gamma^\prime$ form a depth cycle. 
\end{proof}

\subsection{Cutting lenses}
We are almost ready to prove Theorem~\ref{cuttingCurvesIntoSegments}. Before doing so, we will need the following key lemma.
\begin{lemma}\label{cutSpaceCurves}
For each $D\geq 1$, there are constants $A = A(D)$ and $\kappa=\kappa(D)$ so that the following holds. 
Let $\C$ be a set of $n$ irreducible algebraic plane curves of degree at most $D$ and let $\hat\C=\{\hat C \mid C\in\C\}$. 
Then by using $\leq A n^{3/2} \log^{\kappa}n$ cuts, $\hat \C$ can be cut into a set of Jordan space arcs that have no proper depth cycles of length two. 
\end{lemma}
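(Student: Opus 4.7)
The plan is to adapt the Aronov--Sharir technique for eliminating vertical depth cycles in collections of space curves, combined with planar polynomial partitioning. I would carry out a joint induction on $n$ and on the number $X$ of intersecting pairs of projected curves, aiming to prove the refined bound $F(n,X)\leq A(n+\sqrt{nX}\log^{\kappa}n)$ for constants $A=A(D)$ and $\kappa=\kappa(D)$; since in the worst case $X=O_D(n^2)$, this yields the desired $F(n)\leq An^{3/2}\log^{\kappa}n$.

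The first step is a planar polynomial partitioning applied to the projections $\C$. Using the curve version of Guth's partitioning theorem, I would choose a polynomial $P\in\RR[x,y]$ of degree $r$ (a parameter to be tuned) such that $\RR^2\setminus Z(P)$ decomposes into $O(r^2)$ open cells, each meeting at most $O(n/r)$ of the curves in $\C$. Then I would cut each lifted curve $\hat C$ at every point where $\hat C$ meets the cylinder $Z(P)\times\RR$; by B\'ezout this costs $O(rD^2)$ cuts per curve, hence $O(nrD^2)$ cuts in total. The key \emph{cycle-containment} observation is that any proper depth cycle of length two that survives these cuts must have both of its sub-arcs projecting into a common cell of $\RR^2\setminus Z(P)$: the two vertical lines defining the cycle have $xy$-footpoints lying on both projections, and since each sub-arc's projection is connected and avoids $Z(P)$, both footpoints necessarily lie in the same open cell. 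Curves whose projections lie on $Z(P)$ (at most $r$ of them, since $Z(P)$ has at most $r$ irreducible components and the curves in $\C$ are irreducible) are handled separately, either by an auxiliary argument on the one-dimensional variety or by the trivial $O(r^2)$ bound absorbed into the polylog overhead.

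With the containment in hand, we recurse within each cell on the $O(n/r)$ sub-arcs present there. Writing $n_i,X_i$ for the sub-arc and intersection counts in cell $i$, we have $\sum_i n_i\leq O(nr)$ and $\sum_iX_i\leq X$, so the induction hypothesis together with Cauchy--Schwarz yields $\sum_i F(n_i,X_i)\leq A(nr+\sqrt{nrX})\log^{\kappa}n$, after using $\sum_i\sqrt{n_iX_i}\leq\sqrt{(\sum_in_i)(\sum_iX_i)}$. Combining with the $O(nrD^2)$ partition cuts gives the governing recurrence.

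The main obstacle is that this direct recursion loses a multiplicative factor of $\sqrt{r}$ at each level, which cannot be absorbed by polylog overhead alone when $r$ is a genuine growing parameter. The resolution, which is the technical heart of the Aronov--Sharir framework and the contribution beyond a naive application of polynomial partitioning, is to iterate the partitioning $O(\log n)$ times with $r$ chosen as a slowly growing function of $n$ (likely a small polylog): one argues that each round either reduces the effective intersection count $X$ by a constant factor (so that after $O(\log n)$ rounds $X=O(n)$ and the trivial bound $F(n,X)\leq X$ suffices), or else the accumulated cuts have already destroyed all proper depth cycles. Tracking the $\sqrt{r}$ loss and the additive $nr$ cost over the $O(\log n)$ rounds then produces the final polylog factor $\log^{\kappa}n$ in the bound.
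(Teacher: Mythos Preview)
Your proposal contains a genuine gap at exactly the point you flag as ``the main obstacle.'' With a degree-$r$ polynomial in $\RR^2$ you get $O(r^2)$ cells each crossed by $O(n/r)$ curves, so the recursion for the target $n^{3/2}$ bound reads $r^2\cdot(n/r)^{3/2}=r^{1/2}n^{3/2}$, a $\sqrt r$ loss per level. Your proposed fix---iterate $O(\log n)$ rounds and argue that $X$ drops by a constant factor each round---is not an argument: planar partitioning does not destroy crossings, it only distributes them among cells, so $\sum_i X_i$ stays essentially equal to $X$ and there is no mechanism forcing $X$ to shrink. The dichotomy ``either $X$ drops or all cycles are already gone'' has no justification.

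The paper's proof avoids this by partitioning in $\RR^3$ rather than in the plane: it applies Guth's partitioning theorem for varieties directly to the lifted curves $\hat\C$. A degree-$E$ polynomial $f$ in $\RR^3$ produces $O(E^3)$ cells each crossed by $O_D(n/E^2)$ curves, and now the recursion balances exactly: $E^3\cdot(n/E^2)^{3/2}=n^{3/2}$. The cost of working in $\RR^3$ is that one must also kill depth cycles whose two arcs lie in \emph{different} cells (or on $Z(f)$); your planar cycle-containment observation has no analogue here. The paper handles this with a height function $h(p)$ counting intersections of the downward $z$-ray from $p$ with $Z(f)$: $h$ is locally constant along any curve off a ``bad'' variety $Z_{\operatorname{bad}}$ of degree $O(E^2)$, so cutting every curve at its $O_D(E^2)$ intersections with $Z_{\operatorname{bad}}$ (total cost $O_D(E^2n)$) eliminates all cross-cell and on-boundary cycles. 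Taking $E=n^{1/4}$ and recursing then closes the induction with only the polylogarithmic loss. The three-dimensional partitioning, not an iterated planar one, is the idea that makes the exponents match.
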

To avoid interrupting the flow of the proof, we will prove Lemma \ref{cutSpaceCurves} in Appendix \ref{proofOfLemCutSpaceCurvesSec} below. 
A similar statement appears in the recent work of Aronov and Sharir~\cite{ArS1}. That work deals primarily with eliminating cycles
(of any length) in three-dimensional line configurations that satisfy certain genericity assumptions. It also presents an extension of this result
to the case of constant-degree algebraic curves, but does so in a rather sketchy way. For the sake of exposition, and in order
to make the present paper as self contained as possible, we give a detailed and rigorous proof for the specific case that we need.

\begin{remark}
Although we will not need it here, we can define a depth cycle of any length $\ell\geq 2$ in a similar fashion. 
The cutting from Lemma \ref{cutSpaceCurves} will actually eliminate depth cycles of all lengths $\ell\geq 2$. 
\end{remark}

\begin{proof}[Proof of Theorem \ref{cuttingCurvesIntoSegments} using Lemma \ref{cutSpaceCurves}]

The proof of Theorem \ref{cuttingCurvesIntoSegments} will proceed as follows. First, we will cut the curves from $\C$ so as to eliminate all lenses where the corresponding curves intersect
transversely (such lenses correspond to proper depth cycles). Then we will cut the curves from $\C$ so as to eliminate all lenses where the corresponding curves intersect tangentially at one
or both endpoints of the lens. Finally, we will further cut the curves from $\C$ so that the resulting pieces are $x$-monotone and smooth Jordan arcs.

Let $\hat\C=\{\hat C \mid C\in\C\}$. Use Lemma \ref{cutSpaceCurves} to cut $\hat\C$ into Jordan space arcs 
so that all proper depth cycles of length two are eliminated; this cutting uses $O_D\left( n^{3/2} \log^{O_D(1)}n \right)$ cuts (recall that if the same point $p\in\RR^3$ is removed from several curves, then this point is counted with multiplicity). 

The projection of each of these Jordan space arcs to the $xy$-plane yields a connected subset of a curve from $\C$. Let $\mathcal{D}_1$ denote the set of all connected components of the
projections of these Jordan space arcs. Then $\mathcal{D}_1$ is a cutting of $\C$ using
$O_D\left( n^{3/2} \log^{O_D(1)}n \right)$ cuts; the resulting segments do not form any proper lenses.

Next, we further cut the sets in $\mathcal{D}_1$ as follows. For each pair of connected sets $S,S^\prime\in\mathcal{D}_1$ where neither $S$ nor $S^\prime$ is a point, let $C,C^\prime\in\C$ be
the (uniquely defined) curves from $\C$ containing $S$ and $S^\prime$, respectively. Cut $S$ and $S^\prime$ at those points $p\in S\cap S^\prime$ where $p$ is a smooth point of both $C$ and
$C^\prime$, and $C$ and $C^\prime$ are tangent at $p$---points of this type might be endpoints of an improper lens. After this procedure has been performed for every pair of sets $S,S^\prime\in\mathcal{D}_1$, finitely many points have been removed from each set $S\in\mathcal{D}_1$. The total number of cuts performed at this stage is at most
\begin{align}\label{ptsOfTangency}
2\sum_{C\in\mathcal{C}}\Big|\{p\in C\mid& \ p\ \textrm{is a smooth point of}\ C,\ \textrm{and there exists a }  \\[-15pt]
&  \textrm{curve}\ C^\prime\in\C\ \textrm{that is smooth at}\ p\ \textrm{and tangent to}\ C\ \textrm{at}\ p\}\Big|. \nonumber
\end{align}
By \cite[Theorem 1]{ESZ}, the sum in \eqref{ptsOfTangency} is $O_D(n^{3/2})$. 

Finally, we further cut the sets in $\mathcal{D}_2$ as follows. For each $S\in\mathcal{D}_2$ that is not a point, let $C=Z(f)$ be the unique curve from $\C$ containing $S$, 
for a suitable bivariate polynomial $f$ of degree at most $D$. Remove from $S$ those points satisfying $\partial_yf=0$. If $S$ is a point, remove it entirely (such points will be singular points of any algebraic curve that contains them). By B\'ezout's theorem, this process uses $O_D(n)$ cuts. Let $\Gamma_0$ be the collection of connected components of sets from $\mathcal{D}_2$ after this cutting process.

Each of the sets in $\Gamma_0$ is an $x$-monotone Jordan arc, and by Lemma \ref{lensesAndDepthCycles}, each pair of arcs of $\Gamma_0$
intersect at most once. Thus $\Gamma_0$ is a cutting of $\mathcal{C}$ (in the sense of Section \ref{cuttingsSection}) that uses $O_D(n^{3/2}\log^{O_D(1)} n)$ cuts. 
By Lemma \ref{cutsVsComponents}, $|\Gamma_0|=O_D(n^{3/2}\log^{O_D(1)} n)$. Thus $\Gamma_0$ satisfies the conclusions of Theorem \ref{cuttingCurvesIntoSegments}.
\end{proof}

\begin{proof}[Proof of Theorem \ref{cuttingArcsIntoSegments} using Theorem \ref{cuttingCurvesIntoSegments}]
Let $\Gamma$ be a set of $n$ Jordan arcs, each of which is contained in an algebraic curve of degree at most $D$, and every pair of which have finite intersection. Let $X=\sum_{\substack{\gamma,\gamma^\prime\in\Gamma\\\gamma\neq\gamma^\prime}}|\gamma\cap\gamma^\prime|$ 
be the number of times pairs of curves from $\Gamma$ intersect. We will obtain an improved bound when the number of curve-curve intersections is much smaller than $n^2$. 

First, the case where $X=O(n)$ is trivial: we simply cut each arc at all its intersection
points with the other arcs, and get a total of $O(n+X)=O(n)$ pairwise disjoint subarcs; this certainly satisfies the bound in Theorem \ref{cuttingArcsIntoSegments}. Assume then that $X$
is superlinear in $n$. 

\begin{definition}
Let $\Gamma$ be a collection of Jordan arcs in the plane, let $\A(\Gamma)$ be the arrangement determined by $\Gamma$, and let $r\geq 1$. A \emph{$(1/r)$-cutting} of $\A(\Gamma)$ 
into pseudo-trapezoids is a collection $\Xi$ of pairwise-disjoint open connected sets in $\RR^2$ (these sets are called the cells of the cutting), so that the following properties hold
\begin{enumerate}
\item[(i)] Each cell is crossed by at most $n/r$ curves from $\Gamma$ (we say a curve from $\Gamma$ crosses a cell if the curve intersects the cell).
\item[(ii)] The closures of the cells cover the plane.
\item[(iii)] The boundary of each of these cells is the union of at most two vertical line segments and two Jordan arcs, where each arc is a subarc of an arc from $\Gamma$.
\end{enumerate}
\end{definition}

Let $r=\lceil n^2/X\rceil$. Construct a $(1/r)$-cutting $\Xi$ of $\A(\Gamma)$ into pseudo-trapezoids, which consists of 
$O(r+r^2X/n^2) = O(r)$ cells, each of which intersect $n/r=O(X/n)$ curves of $\Gamma$.
The existence of such a cutting has been established by de Berg and Schwarzkopf~\cite{BS} for the case of line segments, and has been
considered as a folklore result for the general case, with an essentially identical proof (see \cite{AAS} and \cite[Proposition 2.12]{HP}).

First, cut each arc of $\Gamma$ at each of its intersection points with the boundaries of the cells of $\Xi$. If an arc of $\Gamma$ occurs as the boundary of one or more cells, cut that arc at
each point where it meets a vertical line segment from the boundary of the trapezoid (these points are the ``corners'' of the trapezoid). This procedure cuts $\Gamma$ into a new collection
$\Gamma_1$ of Jordan arcs, and uses $O(r)\cdot (n/r) = O(n)$ cuts. 

Note that if $\gamma,\gamma^\prime\in\Gamma_1$ form a lens, and if $\delta\subset\gamma,\ \delta^\prime\subset\gamma^\prime$ are Jordan arcs with common endpoints, then $\delta$ and
$\delta^\prime$ must be contained in a common cell from $\Xi$, for otherwise one of them would have to be cut by the procedure just mentioned. Thus, to eliminate all lenses from $\Gamma_1$, it suffices to cut each of the curves in $\Gamma_1$ into smaller Jordan arcs so that within each cell of $\Xi$, all lenses are eliminated.

Each cell $\tau$ of $\Xi$ intersects $O(X/n)$ curves from $\Gamma$; call this collection of curves $\Gamma_{\tau}$. Each curve $\gamma\in\Gamma_\tau$
is contained in a unique algebraic curve $C_{\gamma}$. Let $\C_{\tau}=\{C_\gamma \mid \gamma\in\Gamma_{\tau}\}$. Then $|\C_\tau|\leq|\Gamma_\tau|=O(X/n)$. 

 Apply Theorem \ref{cuttingCurvesIntoSegments} to $\C_\tau$; we obtain a cutting $\Gamma^\prime_{\tau}$ of $\C_\tau$ that uses $O((X/n)^{3/2}+\log^{O_D(1)}(X/n))$ cuts, so that each pair of arcs in $\Gamma^\prime_{\tau}$ intersect at most once. For each curve $C\in C_{\tau},$ let $\mathcal{P}_{C,\tau}$ be the set of points at which $C$ is cut. For each $\gamma\in\Gamma_1$, let $C_\gamma$ be the corresponding algebraic curve and define
 \begin{equation*}
 \mathcal{P}_\gamma =\gamma\cap \bigcup_{\tau \mid \gamma\in\Gamma_\tau} \mathcal{P}_{C_\gamma,\tau}.
\end{equation*}
$\mathcal{P}_\gamma$ is the set of points of $\gamma$ at which $C_\gamma$ is cut, when $C_\gamma$ is regarded as a curve in $\C_\tau$ for some cell $\tau$ containing $\gamma$. 

Let $\Gamma_2$ be the collection of Jordan arcs obtained by cutting each arc $\gamma\in \Gamma_1$ at each point of $\mathcal{P}_\gamma$. The total number of cuts is 
\begin{equation*}
O(n^2/X)O((X/n)^{3/2}\log^{O_D(1)}(X/n))+O(n)=O(n^{1/2}X^{1/2}\log^{O_D(1)}n),
\end{equation*}
and the curves in $\Gamma_2$ satisfy the conclusions of Theorem \ref{cuttingArcsIntoSegments}.
\end{proof}

\section{Point-curve incidences}\label{incidencesSection}

In this section we will prove (a precise version of) Theorem \ref{incidencesPtsCurves}. In order to do so, we 
first define rigorously the notion of a family of algebraic curves.

\subsection{Families of algebraic curves}
Let $f\in\RR[x,y]$ be a polynomial of degree at most $D$. $f$ can be written as a sum of $\binom{D+2}{2}$ monomials (some of which might have zero coefficients),
and thus we can identify $f$ with a vector in $\RR^{\binom{D+2}{2}}$. If $\lambda\neq 0$, then $f$ and $\lambda f$ have the same zero-set. Thus, the set of algebraic curves of degree at most $D$ in $\RR^2$ can be identified with the points in the projective space $\mathbf{P}\mathbb{R}^{\binom{D+2}{2}}$.
Henceforth we will abuse notation and refer to such algebraic curves as elements of $\mathbf{P}\mathbb{R}^{\binom{D+2}{2}}$, and vice-versa.

\begin{definition}\label{defnFamilyOfCurves}
An \emph{$s$-dimensional family of plane curves of degree at most $D$} is an algebraic variety $F\subset  \mathbf{P}\mathbb{R}^{\binom{D+2}{2}}$ that has dimension $s$. We will call the degree of
$F$ the \emph{complexity} of the family. We use the term complexity (rather than degree) to avoid confusion with $D$, which is the maximum degree of the plane curves. Since the degree of the
curves and the complexity of the family are of secondary importance in the context that we consider here, we will sometimes abbreviate this as ``an $s$-dimensional family of plane curves.''
\end{definition}

For example, the set of unit circles in the plane is a two-dimensional family of curves; the set of circles (of arbitrary radius) in the plane is a
three-dimensional family of curves; and the set of axis-parallel ellipses or of hyperbolas are four-dimensional families. In each of
these instances, $D=2$ and the complexity is $O(1)$. 

Informally, if $F$ is $s$-dimensional then we expect to be able to characterize each element of $F$ by $s$ real
parameters. This is the case with all the aforementioned examples and in most of the applications. In this case, requiring a curve of $F$ to pass through $s$ points in the plane imposes $s$ constraints on the $s$ parameters specifying the curve,
and we expect these equations to have a finite number of solutions. When all these expectations are satisfied, we indeed
get a family of curves with $s$ degrees of freedom (as in Pach and Sharir~\cite{PS}). In Appendix \ref{dimFamilyVsDegreesOfFreedom} we further discuss the connection between having $s$ degrees of freedom and belonging to an $s$-dimensional family of curves.

\subsection{New incidence bounds}\label{newIncidenceBoundsSection}
We can now state (and prove) a precise version of Theorem \ref{incidencesPtsCurves}.
\begin{incidencesPtsCurvesThm}[Incidences between points and algebraic curves]
Let $\C$ be a set of $n$ algebraic plane curves that belong to an $s$-dimensional family of curves of complexity $K$, no two of which share a common irreducible component. Let $\P$ be a set of $m$ points in the plane. Then for each $\eps>0$, the number $I(\P,\C)$ of incidences
between the points of $\P$ and the curves of $\C$ satisfies
\begin{equation} \label{newIncidenceBdPrecise}
 I(\P,\C) = O_{s,D,K,\eps}\Big(m^{\frac{2s}{5s-4}} n^{\frac{5s-6}{5s-4}+\eps}\Big) + O_D\Big(m^{2/3}n^{2/3} + m + n\Big).
\end{equation}
\end{incidencesPtsCurvesThm}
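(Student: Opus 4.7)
The plan is to derive a weak incidence bound from the pseudo-segment cutting theorem and then amplify it via polynomial partitioning adapted to the $s$-dimensional parameter space $F$. First I apply Theorem~\ref{cuttingCurvesIntoSegments} to cut the curves of $\C$ into a family $\Gamma$ of $N = O_D(n^{3/2}\log^{O_D(1)} n)$ Jordan arcs that pairwise meet at most once; then the Sz\'ekely--Pach--Sharir incidence bound for arrangements of pseudo-segments gives
\[
 I(\P,\C) = I(\P,\Gamma) = O\bigl(m^{2/3}N^{2/3} + m + N\bigr) = O_D\bigl(m^{2/3}n\log^{O_D(1)} n + m + n^{3/2}\log^{O_D(1)} n\bigr),
\]
which serves as the weak bound throughout the rest of the argument.

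To amplify, I pass to the dual setting. Each curve $C\in\C$ corresponds to a point $\pi(C)\in F\subset\mathbf{P}\RR^{\binom{D+2}{2}}$, and each point $p\in\P$ corresponds to the hypersurface $V_p = \{c\in F : p\in c\}\subset F$ of dimension at most $s-1$; an incidence $(p,c)$ is precisely the event $\pi(c)\in V_p$. Using polynomial partitioning on the variety $F$ (following Guth or Matou\v{s}ek--Pat\'akov\'a), I would choose a polynomial $g$ of degree $r$, not identically zero on $F$, so that $F\setminus Z(g)$ splits into $O_K(r^s)$ open cells, each containing at most $n/r^s$ of the dual points $\pi(C)$. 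A B\'ezout-style argument on $F$, in which the complexity $K$ plays the role of the ambient degree, then shows that any hypersurface $V_p$ not contained in $Z(g)$ meets at most $O_K(r^{s-1})$ cells. Consequently, writing $\P_\Delta$ for the points whose dual hypersurface meets a cell $\Delta$ and $\C_\Delta$ for the curves whose dual point lies in $\Delta$, one has $\sum_\Delta |\P_\Delta| = O_K(mr^{s-1})$ and $\sum_\Delta |\C_\Delta| = n$.

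Within each cell $\Delta$ I apply the weak bound to $\P_\Delta$ and $\C_\Delta$, and sum over cells. Using H\"older's inequality with the constraints $|\C_\Delta|\leq n/r^s$, $\sum_\Delta|\C_\Delta|=n$, and $\sum_\Delta|\P_\Delta| = O_K(mr^{s-1})$, the summation produces a bound of the rough form
\[
 O_D\bigl(m^{2/3} n\, r^{-2/3} + m r^{s-1} + n^{3/2} r^{-s/2}\bigr)\log^{O_D(1)} n,
\]
to which one must add the incidences contributed by curves whose dual point lies on $Z(g)\cap F$ and points whose dual hypersurface is fully contained in $Z(g)$; I would handle these exceptional contributions by recursing on the $(s-1)$-dimensional variety $Z(g)\cap F$, and this iteration is what accumulates the $n^\eps$ factor. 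Finally I optimise $r$ to balance the leading terms, obtaining the main term $O(m^{2s/(5s-4)} n^{(5s-6)/(5s-4)+\eps})$; the additive Szemer\'edi--Trotter-type contribution $O_D(m^{2/3}n^{2/3}+m+n)$ captures the complementary regime, when $m$ is large enough relative to $n$ that it begins to dominate.

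The hard part will be the recursion into $Z(g)\cap F$. I must verify, uniformly across the recursion depth, that the B\'ezout-type bound on how many cells each hypersurface $V_p$ can visit continues to hold on the lower-dimensional strata, and that the complexity of the partitioning varieties together with the implicit constants does not blow up with the recursion. These technicalities are what force the arbitrarily small $\eps$-loss in the exponent of $n$ and what determine the precise dependence of the implicit constant on $s$, $D$, $K$, and $\eps$.
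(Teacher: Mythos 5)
There is a genuine gap in your amplification step. You partition the dual space once, apply the \emph{weak} bound inside every cell, and then optimize $r$. This cannot produce the exponents $\frac{2s}{5s-4}$, $\frac{5s-6}{5s-4}$. Taking your three terms at face value, requiring both $m^{2/3}nr^{-2/3}$ and $mr^{s-1}$ to be at most the target $M=m^{\frac{2s}{5s-4}}n^{\frac{5s-6}{5s-4}}$ forces
$m^{\frac{2s-4}{5s-4}}n^{\frac{3}{5s-4}} \le r \quad\text{and}\quad r^{s-1}\le m^{-\frac{3s-4}{5s-4}}n^{\frac{5s-6}{5s-4}}$,
which are compatible only when $m^{s(2s-3)}\le n^{2s-3}$, i.e.\ $m\le n^{1/s}$ --- exactly the regime where the bound is trivially $O(n)$; for every interesting $m$ there is no admissible $r$. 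Using the correct per-cell weak bound does not rescue the scheme either: the first term then sums (via H\"older) to $m^{2/3}n^{2/3}r^{(s-2)/3}$, which does not decrease in $r$ (and grows for $s>2$), reflecting the known fact that the Szemer\'edi--Trotter-type term is not improved by a single dual-space partition. The paper's proof is structurally different: it is an induction on $m$ and $n$ in which the \emph{strong} bound being proven is applied to each cell subproblem; H\"older together with the crossing bound $O(r_i^{1-1/s})$ produces a factor $r^{-\eps}$ that closes the induction, the weak bound is invoked only at terminal subproblems with $m_{ij}\ge n_{ij}^{5/4+\eps'}$ (where $m^{2/3}n^{2/3}$ dominates $n^{3/2}\polylog n$), and subproblems with $m\le n^{1/s}$ are handled by the semi-algebraic Zarankiewicz bound of Fox et al.\ (the incidence graph contains no $K_{D^2+1,2}$), giving $O(mn^{1-1/s}+n)=O(n)$. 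Your proposal has no analogue of either terminal case and never applies the bound recursively, so the claimed exponent is not reachable by the route you describe.

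Two further points. Your weak bound is stated more weakly than what the cutting theorem gives, and too weakly to serve even as a terminal bound: Sz\'ekely's method yields $O(m^{2/3}X^{1/3}+m+N)$, and since the $N=O_D(n^{3/2}\polylog n)$ arcs still lie on only $n$ curves of degree $D$, the number of crossings is $X=O_D(n^2)$, so the correct weak bound is $O_D(m^{2/3}n^{2/3}+n^{3/2}\polylog n+m)$; your $m^{2/3}N^{2/3}=m^{2/3}n\polylog n$ already exceeds the asserted bound at $m\approx n^{5/4}$. Finally, the lower-dimensional recursion into $Z(g)\cap F$ that you flag as the hard part is precisely what the paper sidesteps: after a duality lemma (which also removes the finitely many bad points and curves for which an entire component of $F$ passes through a point), it invokes the multilevel polynomial partitioning of Matou\v{s}ek and Pat\'akov\'a in $\RR^s$, and the surfaces that \emph{contain} a cell are disposed of by the same $K_{2,D^2+1}$-free property rather than by recursing along strata.
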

\medskip

\begin{remark} 
If the arrangement of points and curves also has $s\ge 3$ degrees of freedom, then Pach and Sharir's bound from \cite{PS} would say
that $I(\P,\C)=O_{D,s}\big(m^{\frac{s}{2s-1}}n^{\frac{2s-2}{2s-1}}+m+n\big)$. The bound \eqref{newIncidenceBdPrecise} is superior 
for $m > n^{1/s + c}$ for any constant $c>0$, with a suitable $\eps$ that depends linearly on $c$. When $m\leq n^{1/s-c}$, both bounds become $O(n)$ (again, with a suitable choice of $\eps$), and when $m$ is close to $n^{1/s}$,
our bound is larger by a factor of $n^{\eps}$ than the bound in \cite{PS}.
\end{remark}

\begin{remark}
For $s=2$, which arises for lines and for unit circles, we almost recover the Szemer\'edi--Trotter bound (we miss by an $n^\eps$ factor). For
$s=3$, which arises for arbitrary circles and for vertical parabolas, we again almost recover the bound 
$O(m^{6/11}n^{9/11}\log^{2/11}n+m^{2/3}n^{2/3}+m+n)$ from~\cite{ANPPSS,AS,MT} (in our bound, the $\log^{2/11}n$ is weakened to $n^\eps$). For $s=4$, which arises for example for axis-parallel
ellipses or hyperbolas, we get the bound $O\left(m^{1/2}n^{7/8+\eps} + m^{2/3}n^{2/3} + m + n \right)$, the best previously known bound for this case was the Pach-Sharir bound
$O(m^{4/7}n^{6/7}+m+n)$. The new bound is superior when $m\ge n^{1/4+c}$ for any constant $c>0$ (with $\eps$ depending on $c$, as above).
\end{remark}

The high-level approach that we use follows the earlier treatments that have appeared, for example, in Agarwal et al.~\cite{ANPPSS}. 
We first derive a weaker bound using Sz\'ekely's crossing lemma argument for collections of pseudo-segments~\cite{Sze}, and then strengthen this bound
by passing to a parametric dual space in which the curves of $\mathcal{C}$ become points, and the points of $\P$ become 
bounded-degree algebraic hypersurfaces. We then decompose the problem into smaller subproblems, using the multilevel 
polynomial partitioning technique of Matou\v{s}ek and Pat\'akov\'a~\cite{MP} (see Theorem~\ref{thm:mp} below). Finally, we use induction (or 
rather recursion) on the subproblems produced by the partition. The terminal instances of the recursion are subproblems 
which are either too small, or at which we can effectively apply the weak bound.

\subsection{An initial weaker bound}
\begin{lemma}\label{STResultLem}
Let $\P$ be a set of $m$ points and let $\C$ be a set of $n$ plane algebraic curves of degree at most $D$, no two of which share a common component. Then 
\begin{equation}\label{boundOnIPC}
 I(\P,\C)= O_D(m^{2/3}n^{2/3}+n^{3/2}\log^{O_D(1)}n+m).
\end{equation}
\end{lemma}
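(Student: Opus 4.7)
\medskip

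\textbf{Proof plan.} The plan is to reduce to the case of pseudo-segments using Theorem~\ref{cuttingCurvesIntoSegments} and then run a standard Sz\'ekely-style crossing-number argument, but with an improved crossing count that uses the algebraic structure of the original curves.

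First I would apply Theorem~\ref{cuttingCurvesIntoSegments} to obtain a cutting $\Gamma_0$ of $\C$ into $N=O_D(n^{3/2}\log^{O_D(1)}n)$ Jordan arcs, every pair of which intersect at most once. Every incidence $(p,C)\in\P\times\C$ is either an incidence $(p,\gamma)$ for a unique piece $\gamma\in\Gamma_0$ with $\gamma\subset C$, or $p$ is one of the (finitely many) cut points on $C$; in the latter case only $O_D(1)$ incidences are created or destroyed per cut point, so
\begin{equation*}
I(\P,\C)\;\le\; I(\P,\Gamma_0)\;+\;O_D(N).
\end{equation*}

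Next, I would form the usual incidence graph $G$ on vertex set $\P$, joining two points by an edge whenever they are consecutive on some pseudo-segment of $\Gamma_0$. Each pseudo-segment with $k\ge 1$ incidences contributes $k-1$ edges, so the number $E$ of edges satisfies $E\ge I(\P,\Gamma_0)-N$. The key step is to bound the number of crossings $X$ in the natural planar drawing of $G$: two edges lying on pieces of the same curve of $\C$ cannot cross, while for distinct $C_1,C_2\in\C$ the total number of crossings between pieces of $C_1$ and pieces of $C_2$ is at most $|C_1\cap C_2|\le D^2$ by B\'ezout's theorem. Summing gives
\begin{equation*}
X\;\le\;D^2\binom{n}{2}\;=\;O_D(n^2).
\end{equation*}
This is the one place where we exploit that the parents are algebraic of bounded degree rather than merely pseudo-segments; if we used only the pseudo-segment property on $\Gamma_0$ we would get the useless bound $X\le\binom{N}{2}$.

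Finally, apply the Crossing Lemma. If $E<4m$ then $I(\P,\Gamma_0)\le E+N=O(m)+N$. Otherwise $E^3=O(m^2X)=O_D(m^2n^2)$, so $E=O_D(m^{2/3}n^{2/3})$. Combining with the loss from cut points and with $N=O_D(n^{3/2}\log^{O_D(1)}n)$ yields
\begin{equation*}
I(\P,\C)\;=\;O_D\!\bigl(m^{2/3}n^{2/3}+n^{3/2}\log^{O_D(1)}n+m\bigr),
\end{equation*}
as claimed. The only nontrivial ingredient is Theorem~\ref{cuttingCurvesIntoSegments}; the rest is bookkeeping, and the ``obstacle'' is really just noticing that the crossing count can be taken against the $n$ original curves rather than the $N$ pseudo-segments, which is what turns a vacuous $m^{2/3}n$ into the sharp $m^{2/3}n^{2/3}$.
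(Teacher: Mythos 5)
Your proposal is correct and follows essentially the same route as the paper: cut $\C$ into $O_D(n^{3/2}\log^{O_D(1)}n)$ pseudo-segments via Theorem~\ref{cuttingCurvesIntoSegments}, account for the $O_D(N)$ incidences lost at cut points, and run Sz\'ekely's crossing-lemma argument, with the crucial observation (also used in the paper) that the crossing number is bounded by the $O_D(n^2)$ pairwise intersections of the original degree-$D$ curves rather than by $\binom{N}{2}$.
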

\begin{proof}
Apply Theorem \ref{cuttingCurvesIntoSegments} to $\C$, and let $\Gamma_0$ be the resulting set of Jordan arcs. If $(p,C)$ is an incidence from $I(\P,\C)$, then either (a) $p$ is a singular
point of $C$, or (b) there is a curve $\gamma\in\Gamma_0$ with $\gamma\subset C$ and either (b.i) $p\in\gamma$ or (b.ii) $p$ lies at an endpoint
of $\gamma$ (recall that the curves in $\Gamma_0$ are relatively open). We conclude that

\begin{equation*}
 I(\P,\C)\leq I(\P,\Gamma_0)+2|\Gamma_0|+D^2|\C|.
\end{equation*}
The bound in (\ref{boundOnIPC}) then follows by
applying the Szem\'eredi-Trotter theorem for incidences with pseudo-segments, using the crossing-lemma technique of Sz\'ekely~\cite{Sze}, the bound
$|\Gamma_0| = O_D(n^{3/2}\log^{O_D(1)}n)$ from Theorem \ref{cuttingCurvesIntoSegments}, and the fact that the number of crossings
between the arcs of $\Gamma_0$ is still only $O(n^2)$.
\end{proof}

\begin{remark}
 In fact, the above argument actually proves a slightly stronger statement. If the set of algebraic curves $\mathcal{C}$ is replaced by a set of
Jordan arcs $\Gamma$ that satisfy the hypotheses of Theorem \ref{cuttingArcsIntoSegments} (or if we still stick to full algebraic curves
with a smaller number of intersections), then \eqref{boundOnIPC} can be replaced by the stronger bound 
 \begin{equation*}
 I(P,\Gamma) = O_D\left( m^{2/3}X^{1/3} + m + n + n^{1/2}X^{1/2}\log^{O_D(1)} n \right),
\end{equation*}
 where $X=\sum_{\substack{\gamma,\gamma^\prime\in\Gamma\\ \gamma\neq\gamma^\prime}}|\gamma\cap\gamma^\prime|$. 
 The last term follows from the refined bound on $|\Gamma_0|$ given in Theorem~\ref{cuttingArcsIntoSegments}, 
 and the first term follows from Sz\'ekely's crossing-lemma analysis~\cite{Sze}.
\end{remark}
\subsection{Duality and space decomposition}\label{dualityTransSec}
In this section we will describe a ``duality transform'' that sends algebraic curves to points in a suitable parameter space, and sends points in the plane to algebraic varieties in this parameter space. The key property of this transform is that it preserves the incidence relation---if a curve in the plane is incident to a point, then the corresponding point and variety in the parameter space are incident too. 

In the statement of Theorem \ref{incidencesPtsCurves}, we refer to a family of algebraic curves of degree at most $D$, which, by definition, is a subvariety $F\subset \mathbf{P}\RR^{\binom{D+2}{2}}$. $F$
need not be irreducible, so let $F^\prime\subset F$ be an irreducible component of $F$; we will consider each irreducible component of $F$ separately. If $F^\prime=
\mathbf{P}\RR^{\binom{D+2}{2}}$, then the set of curves that are incident to a point $p$ in the plane corresponds to a proper subvariety $\sigma_p$ of $F^\prime$
(in fact, $\sigma_p$ is a hyperplane). However, if $F^\prime$ is a
proper subvariety of $\mathbf{P}\RR^{\binom{D+2}{2}}$, then it is possible that there is a point $p$ that is incident to every curve in $F^\prime$. This can occur, but if it does, then either
there are $\leq D^2$ points $p\in\RR^2$ with this property, or $F^\prime$ can contain at most one curve from $\C$. Thus if we throw away a small set of points and curves, we can assume that for each point $p$
in the plane, the set of curves that are incident to $p$ corresponds to a (proper) subvariety of $F^\prime$ (this set is in fact the intersection of $F'$ with some hyperplane). The number of incidences that we may have missed is at most $O_{D,\operatorname{deg}(F)}(n+m)$, and we can deal with these incidences separately.
The following lemma makes this statement precise.

\begin{lemma}[point-curve duality]\label{dualityTransformLem}
 Let $F\subset \mathbf{P}\mathbb{R}^{\binom{D+2}{2}}$ be a family of plane curves of degree at most $D$ with $\dim F=s$. Let $\mathcal{C}\subset F$ be a
finite set of curves, no pair of which share a common component, and let $\P\subset\RR^2$ be a finite set of points. Then there exist a set of points $\P_{\operatorname{bad}}\subset\RR^2$, a set of curves $\C_{\operatorname{bad}}\subset \C$, a set of points $W=\{w_C\}_{C\in\mathcal{C}\backslash \C_{\operatorname{bad}}}\subset\RR^s$ and a set of real algebraic varieties $\Sigma=\{\sigma_p\}_{p\in\P\backslash \P_{\operatorname{bad}}}$ in $\RR^s$ that satisfy the following properties:
 \begin{itemize}
  \item Each variety $\sigma_p$ has dimension at most $s-1$ and degree $O_{D,\deg(F)}(1)$.
  \item If $C\in\mathcal{C}\backslash \C_{\operatorname{bad}}$ and $p\in \P\backslash\P_{\operatorname{bad}}$, then $p\in C$ if and only if $w_C\in \sigma_p$.
  \item $|\P_{\operatorname{bad}}|=O_{D,\deg(F)}(1)$ and $|\C_{\operatorname{bad}}|=O_{D,\deg(F)}(1)$.
 \end{itemize}
\end{lemma}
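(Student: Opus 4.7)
The plan is to work component by component in the family $F$, use B\'ezout's theorem to isolate a finite ``universally incident'' locus, and then use a generic linear projection to push the incidence data into $\RR^s$. Decompose $F$ into its $k\le\deg F$ irreducible components $F_1,\ldots,F_k$ and construct the bad loci. For each $F_i$, examine the set of points in $\RR^2$ that lie on \emph{every} curve in $F_i$. If $|\C\cap F_i|\le 1$, put the single curve (if any) into $\C_{\operatorname{bad}}$ and skip. Otherwise pick two distinct curves $C_1,C_2\in\C\cap F_i$; since they share no common component, B\'ezout gives $|C_1\cap C_2|\le D^2$, and any point lying on every curve of $F_i$ must lie in $C_1\cap C_2$, so I put these $\le D^2$ points into $\P_{\operatorname{bad}}$. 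Summed over components this gives $|\C_{\operatorname{bad}}|\le\deg F$ and $|\P_{\operatorname{bad}}|\le D^2\deg F$, both $O_{D,\deg(F)}(1)$.

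Next, set $N=\binom{D+2}{2}-1$ and choose an affine chart $U\cong\RR^N$ of $\mathbf{P}\RR^{N+1}$ that contains every element of $\C$; this is possible because $\C$ is finite, so a generic linear form on $\RR^{N+1}$ is nonzero on each element. Inside $U$, each $F_i$ is an affine variety of dimension $s_i\le s$, and for each $p\in\RR^2$ the set $H_p$ of polynomials vanishing at $p$ restricts to an affine hyperplane. I then choose a linear projection $\pi:\RR^N\to\RR^s$ with the properties that (a) $\pi|_{F_i}$ is generically finite for each $i$; (b) $\pi$ is injective on the finite set $\C$; and, most importantly, (c) for every $C\in\C\cap F_i$ and every $p\in\P\setminus\P_{\operatorname{bad}}$ with $p\notin C$, no curve $C'\in F_i\cap H_p$ satisfies $\pi(C')=\pi(C)$. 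To see that a generic $\pi$ achieves (c), note that for fixed $(C,p,i)$ the set of bad projections is $\{\pi:\ker\pi\cap(C-(F_i\cap H_p))\neq\{0\}\}$. Since $p\notin\P_{\operatorname{bad}}$ forces $F_i\not\subset H_p$, the variety $F_i\cap H_p$ has dimension $\le s-1$, and the projections whose kernel contains a prescribed nonzero vector form a codimension-$s$ subset of the $sN$-dimensional space of all projections, so this bad set has codimension $\ge 1$; only finitely many triples $(C,p,i)$ matter, so a generic $\pi$ satisfies (a), (b) and (c) simultaneously.

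Finally, define $w_C=\pi(C)$ for $C\in\C\setminus\C_{\operatorname{bad}}$ and $\sigma_p=\bigcup_i\overline{\pi(F_i\cap H_p)}$ (real Zariski closure) for $p\in\P\setminus\P_{\operatorname{bad}}$. Generic finiteness of $\pi|_{F_i}$ together with $\dim(F_i\cap H_p)\le s-1$ gives $\dim\sigma_p\le s-1$, and B\'ezout together with the standard bound on the degree of a generic linear image of a variety yields $\deg\sigma_p=O_{D,\deg(F)}(1)$. The implication $p\in C\Rightarrow w_C\in\sigma_p$ is immediate (take $C$ itself as a preimage of $w_C$ inside $F_i\cap H_p$), and the converse is exactly condition (c). The main obstacle I expect is reconciling the \emph{real} Zariski closure used to define $\sigma_p$ with condition (c), which only rules out \emph{actual} image points of $F_i\cap H_p$ under $\pi$; a clean treatment probably requires passing to the complexifications $F_i^*,H_p^*$, checking that the real image is dense in the complex image, and strengthening (c) to a statement about complex preimages of $w_C$. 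Aside from this real-algebraic subtlety, the argument is a standard projective duality combined with a parameter count.
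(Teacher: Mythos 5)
Your proposal follows essentially the same route as the paper: decompose $F$ into irreducible components, place single-curve components into $\C_{\operatorname{bad}}$, use B\'ezout to bound the set of points lying on an entire (good) component, pass to a generic affine chart, and project generically to $\RR^s$, taking $w_C=\pi(C)$ and $\sigma_p$ the Zariski closure of the image of $F'\cap H_p$; your explicit dimension count is in fact more detailed than the paper's one-line appeal to genericity. The real-closure subtlety you flag is the same one the paper dismisses with the parenthetical remark that, for generic choices, the image is already a real variety, so your complexification fix (together with extending your condition (c) to pairs of distinct components, since $\sigma_p$ is a union over all good components) is consistent with the paper's treatment.
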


\begin{proof}
Decompose $F$ into its irreducible components $F_1\cup\cdots\cup F_\ell$. If an irreducible component contains at most one curve from $\C$, add this curve to $\C_{\operatorname{bad}};$ after
doing so, $|\C_{\operatorname{bad}}|=O_{D,\deg(F)}(1)$. After re-indexing, we will assume that each of the remaining components $F_1,\ldots,F_{\ell^\prime}$ contain at least two curves from $\C$. Let $F^\prime=F_1\cup\cdots\cup F_{\ell^\prime}$.

For each $p\in\RR^2$, define $H_p=\{\gamma\in \mathbf{P}\RR^{\binom{D+2}{2}} \mid p\in\gamma\}$. Then $H_p$ is a subvariety of $\mathbf{P}\RR^{\binom{D+2}{2}}$ (in fact, it it a hyperplane).
Observe that if $F_j$ contains at least two curves from $\C$, then $F_j\subset H_p$ for at most $D^2$ points $p\in\RR^2$. Indeed, suppose there exist points $p_1,\ldots,p_{D^2+1}$ with
$F_j\subset H_{p_i}$ for each $i=1,\ldots,D^2+1.$ Let $C_1,C_2$ be distinct curves in $\C\cap F_j$. Then $C_1$ and $C_2$ intersect in $\geq D^2+1$ points, so by B\'ezout's theorem, they must
share a common component, contrary to our assumptions. Define 
\begin{equation*}
\P_{\operatorname{bad}}=\bigcup_{j=1}^{\ell^\prime}\{p\in\RR^2 \mid F_j\subset H_p\}.
\end{equation*}
We have $|\P_{\operatorname{bad}}|=O_{D,\deg(F)}(1)$. 

Now, if $p\in\RR^2\backslash \P_{\operatorname{bad}}$, then $H_p\cap F_j$ is a proper subvariety of $F_j$ for each $j=1,\ldots,\ell^\prime$. Thus $H_p\cap F^\prime$ is a proper subvariety of $F^\prime$ of degree $O_{D,\operatorname{deg}(F)}(1)$. 

For the next step, we need to identify (a Zariski open subset of) $\mathbf{P}\mathbb{R}^{\binom{D+2}{2}}$ with $\RR^{\binom{D+2}{2}-1}$. To do this, we need to choose
where the ``hyperplane at infinity'' lies. We wish to do this in a way that does not affect the incidence relation between the points (representing curves) of $\mathcal{C}$ and the surfaces $\{H_p\}$. Let $H\subset\mathbf{P}\mathbb{R}^{\binom{D+2}{2}}$ be a generic hyperplane (in particular, $H$ avoids all the points of $\mathcal{C}$, and $H\neq H_p$ for any $p\in\P$). After a change of coordinates, we can assume that $H$ is the hyperplane $\{x_0=0\}\subset \mathbf{P}\mathbb{R}^{\binom{D+2}{2}}.$ With this choice of $H$, we obtain a function 
\begin{equation*}
\begin{split}
\operatorname{Aff} : \; &\mathbf{P}\mathbb{R}^{\binom{D+2}{2}}\backslash H\to \RR^{\binom{D+2}{2}-1}, \quad\text{defined by}\\
&[x_0:x_1:\ldots:x_{\binom{D+2}{2}-1}]\mapsto(x_1/x_0,\ldots,x_{\binom{D+2}{2}-1}/x_0).
\end{split}
\end{equation*}

Let $\pi :\; \RR^{\binom{D+2}{2}-1}\to\RR^s$ be a generic surjective linear transformation (i.e., $\pi$ is given by a
generic\footnote{Over $\RR$ one must be a bit careful with ``generic'' points, since they lack some of the favorable properties that hold over
an algebraically closed field. See \cite[\S4.2]{Zahl} for further discussion of generic points in the context of combinatorial geometry.}
$\left(\binom{D+2}{2}-1\right)\times s$ matrix, which will necessarily have rank $s$). 

For each $p\in\P\backslash\P_0$, define $\sigma_p$ to be the Zariski closure of $\pi(\operatorname{Aff}(F^\prime\cap H_p))$ (since $\pi$ and $H$ were chosen
generically, $\pi(\operatorname{Aff}(H_p))$ is already a real variety, but it is easier to take the Zariski closure than to verify this fact).
We have $\sigma_p\subset \mathbb{R}^s$, and $\sigma_p$  is a real algebraic variety of dimension at most $s-1$ and degree $O_{D,\deg(F)}(1)$.

For each $C\in\C\backslash\C_{\operatorname{bad}}$, define $w_C = \pi(\operatorname{Aff}(C))$; this is a point in $\RR^s$. Define $W=\{w_C\}_{C\in\C\backslash\C_{\operatorname{bad}}}$. Since
$\pi$ was chosen generically, it preserves the incidence relation between the points $C\in\mathcal{C}$ and the surfaces $H_p$. Thus if $p\in\P\backslash\P_{\operatorname{bad}}$ and $C\in\C\backslash\C_{\operatorname{bad}}$, then $p\in C$ if and only if $w_C\in \sigma_p$. 
\end{proof}
\begin{remark}
The objects created by Lemma \ref{dualityTransformLem} appear rather suspicious. We know that in dimensions $\geq 3$, it is impossible to get non-trivial point-hypersurface incidence theorems unless we impose some sort of non-degeneracy condition on the points and surfaces. Otherwise, it is possible that all of the hypersurfaces intersect in a common curve (or higher dimensional variety), and all of the points lie on this curve. On the face of it, we have not ruled out this possibility, so it seems strange that we will be able to use Lemma \ref{dualityTransformLem} to obtain non-trivial incidence theorems. 

However, since the points and varieties produced by Lemma \ref{dualityTransformLem} come from collections of points and curves in $\RR^2$, we will be able to exclude the sort of degenerate
arrangements that prevent non-trivial incidence results. This will be made explicit in the bound \eqref{FPSSZBound} below, which exploits the fact that the incidence graph of points and curves cannot contain a large induced bipartite subgraph.
\end{remark}

\subsection{Multi-level polynomial partitioning}
The duality transform from Lemma \ref{dualityTransformLem} allows us to recast our incidence problem involving points and curves in the plane as a new incidence problem involving points and varieties in $\RR^s$. We will analyze this new problem using the following multilevel polynomial partitioning theorem of Matou\v{s}ek and Pat\'akov\'a~\cite{MP}, which generalizes the polynomial partitioning theorem of Guth and Katz from \cite{GK2}.

\begin{theorem}[Matou\v{s}ek and Pat\'akov\'a~\protect{\cite[Theorem 1.1]{MP}}] \label{thm:mp}
For every integer $s > 1$ there is a constant $K$ such that the following holds.
Given a set $\P \subset\reals^s$ of cardinality $n$ and a parameter $r > 1$, there are 
numbers $r_1, r_2,\ldots, r_s \in [r, r^K]$, positive integers $t_1, t_2, \ldots, t_s$, 
a partition
\begin{equation*}
\P = \P^* \cup \bigcup_{i=1}^s \bigcup_{j=1}^{t_i} \P_{ij}
\end{equation*}
of $\P$ into pairwise disjoint subsets, and for every $i, j$, a connected set $S_{ij}\subseteq\reals^s$
containing $\P_{ij}$, such that $|\P_{ij}| \le n/r_i$ for all $i, j$; $|\P^*| \le r^K$; and the 
following holds:

Let $Z\subset\RR^s$ be a variety of degree at most $D$. Then for each $i = 1,2,\ldots,s$, the number of sets $S_{ij}$ that cross $Z$ is $O_{D,d}\left(r_i^{1-1/d}\right)$.
\end{theorem}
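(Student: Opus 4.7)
The plan is to build the partition by iterating a polynomial partitioning step and descending one ambient dimension at a time from $s$ down to $0$. At stage $i$ I would maintain a subset $P^{(i)} \subseteq \P$ together with a real variety $V^{(i)} \subseteq \RR^s$ of dimension at most $i$ that contains $P^{(i)}$; start with $i=s$, $P^{(s)} = \P$, $V^{(s)} = \RR^s$. A polynomial partitioning step on $V^{(i)}$ produces a polynomial $g_i$; the connected components of $V^{(i)} \setminus Z(g_i)$ are the sets $S_{ij}$, the points of $P^{(i)}$ that land inside them are the $\P_{ij}$, and the points of $P^{(i)}$ that fall onto $Z(g_i)$ are pushed to stage $i-1$ together with the variety $V^{(i-1)} := V^{(i)} \cap Z(g_i)$, which has dimension at most $i-1$. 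After all $s$ stages only finitely many points remain, and these form $\P^*$.

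The first ingredient is the Guth--Katz polynomial partitioning theorem in $\RR^s$: for any finite $Q \subset \RR^s$ and any $\tilde r \ge 1$, there exists a polynomial $f$ of degree $O(\tilde r^{1/s})$ whose complement splits $Q$ into classes of size at most $|Q|/\tilde r$. This handles the top level $i=s$. The main obstacle is the analogous statement on a variety $V \subseteq \RR^s$ of dimension $d<s$: for any finite $Q \subset V$ and any $\tilde r \ge 1$, there must exist a polynomial $g$ on $\RR^s$ of degree $O_d(\tilde r^{1/d})$ whose zero set splits $Q$ into classes of size at most $|Q|/\tilde r$. The crucial feature is that the exponent is $1/d$, depending only on the intrinsic dimension of $V$ and not on the ambient $s$. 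I would establish this by feeding the Borsuk--Ulam / Stone--Tukey ham-sandwich machinery not with the full space of degree-$\Delta$ polynomials on $\RR^s$ (of dimension $\Theta(\Delta^s)$), but with their restriction to $V$, whose dimension is $O_d(\Delta^d)$ by a Hilbert-function estimate; inducting on $d$ and passing exceptional lower-dimensional strata of $V$ to the next level of the recursion is the technical heart of the argument.

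For the crossing bound I would appeal to a Milnor--Thom--Warren-type inequality adapted to $V^{(i)}$: if a variety $Z$ of degree at most $D$ does not contain any top-dimensional component of $V^{(i)}$, then $Z \cap V^{(i)}$ is a subvariety of strictly smaller dimension whose connected components meet $O_{D,d}\big((\deg g_i)^{d-1}\big)$ cells $S_{ij}$, where $d = \dim V^{(i)}$. With the choice $\deg g_i \sim \tilde r_i^{1/d}$ this becomes the desired bound $O_{D,d}(r_i^{1-1/d})$. The exceptional case where $Z$ absorbs a top-dimensional component of $V^{(i)}$ only happens for a bounded family of ``bad'' $Z$'s and can be dealt with by enlarging $\P^*$ by $O(1)$ per such $Z$.

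The final piece is parameter bookkeeping. At stage $i$ I only know $|P^{(i)}| \le n$, so to guarantee $|\P_{ij}| \le n/r_i$ rather than the weaker $|P^{(i)}|/r_i$, the partitioning parameter fed to the variety-partitioning theorem at stage $i$ must be inflated accordingly; B\'ezout's theorem bounds $\deg V^{(i)}$ in terms of $\deg g_s, \ldots, \deg g_{i+1}$, controlling the constants hidden in the degree of $g_i$. Iterating this and tracking exponents shows that each effective $r_i$ has the form $r^{e_i}$ for a constant exponent $e_i = e_i(s)$, hence $r_i \in [r, r^K]$ for some $K = K(s)$. The bound $|\P^*| \le r^K$ follows because the residual set after $s$ partitioning steps lies in the intersection of $s$ hypersurfaces whose degrees multiply to $O(r^K)$, which by B\'ezout contains only $O(r^K)$ points.
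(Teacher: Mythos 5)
You should first note that the paper does not actually prove this statement: Theorem \ref{thm:mp} is quoted verbatim from Matou\v{s}ek--Pat\'akov\'a \cite{MP}, and the only thing the authors prove themselves is the supplementary estimate $\sum_i t_i\le r^K$ discussed in the remark after the theorem. So your sketch has to be judged against the proof in \cite{MP}, whose overall architecture it does mirror (descent through a chain of varieties of decreasing dimension, Stone--Tukey applied to polynomials restricted to the current variety with the degree controlled by a Hilbert-function estimate, and Barone--Basu/Milnor--Thom-type bounds for the crossing counts). The difficulty is that your sketch elides, and in part misidentifies, exactly the step that is the technical heart of \cite{MP}.

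The main gap is the assertion that $V^{(i-1)}:=V^{(i)}\cap Z(g_i)$ ``has dimension at most $i-1$.'' The ham-sandwich argument over the space of restrictions only guarantees $g_i|_{V^{(i)}}\not\equiv 0$. After the first step $V^{(i)}$ is reducible, with degree (hence number of irreducible components) growing like a power of $r$, and $g_i$ may vanish identically on some \emph{top-dimensional} components of $V^{(i)}$. On those components the dimension does not drop, the points there are not separated at all, and the descent stalls; your remark about passing ``exceptional lower-dimensional strata'' down the recursion addresses the harmless case (lower-dimensional pieces are fine) rather than this one. Relatedly, the lower bound $\dim\gtrsim \Delta^{d}$ for the restricted polynomial space (which is what Stone--Tukey needs --- an upper bound $O_d(\Delta^d)$, as you wrote, is the wrong direction) is clean only for irreducible or equidimensional varieties, which ties into the same reducibility issue. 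Ensuring a partitioning polynomial that behaves correctly on every relevant component, while controlling the proliferation of components whose number is $r$-dependent, is precisely what \cite{MP} spend their effort on.

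Two further points. Your fix for the case where $Z$ contains a top-dimensional component of $V^{(i)}$ --- ``enlarge $\P^*$ by $O(1)$ per such bad $Z$'' --- does not respect the quantifier order: the partition and $\P^*$ are constructed once, and the crossing bound must then hold for \emph{every} variety $Z$ of degree at most $D$, so nothing can be added to $\P^*$ per $Z$. (In fact no fix is needed there: cells contained in $Z$ are excluded by the definition of ``cross.'') Finally, the crossing estimate $O_{D,d}\bigl((\deg g_i)^{d-1}\bigr)$ suppresses the factor coming from $\deg V^{(i)}$ in the Barone--Basu bound; since $\deg V^{(i)}$ is itself a power of $r$ rather than a constant, the claimed constant ``depending only on $D$ and $d$'' does not follow as stated, and absorbing this factor is exactly the parameter bookkeeping that forces the $r_i$ to range up to $r^K$. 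As it stands, then, the proposal is an outline of the right strategy but with the decisive steps missing or incorrect.
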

\noindent (In the theorem, $Z$ \emph{crosses} $S_{ij}$ if $Z\cap S_{ij} \ne\emptyset$ 
but $Z$ does not contain $S_{ij}$.)

While it is not stated explicitly in \cite{MP}, we also have the bound
\begin{equation}\label{boundOnNumberOfPieces}
 \sum_{i=1}^s t_i\leq r^K,
\end{equation}
provided $K$ is chosen sufficiently large (depending only on $s$). In brief, the bound \eqref{boundOnNumberOfPieces} is obtained as follows. The proof of \cite[Theorem 1.1]{MP} constructs a
sequence of $s$ polynomials, each of degree at most $r^{K^\prime}$ (where $K^\prime$ depends only on $s$), and the sets $S_{ij}$ are the connected components of all realizable sign conditions of these polynomials. By \cite{BPR}, the set of connected components of sign conditions determined by $s$ polynomials in $\RR^s$, each of degree at most $r^{K^\prime}$, has cardinality at most 
$O_s(1) r^{sK^\prime}$. Thus if $r>1$ (which is assumed to be the case) and $K$ is chosen sufficiently large, then \eqref{boundOnNumberOfPieces} holds.

\subsection{Proof of Theorem \ref{incidencesPtsCurves}}
We are now ready to prove Theorem \ref{incidencesPtsCurves}. First, note that \eqref{newIncidenceBdPrecise} immediately holds if $m \ge n^{5/4+\eps'}$, where $\eps'$ is a suitable multiple of $\eps$ (see below for a concrete choice). 
Indeed, we then have 
\begin{equation*}
n^{3/2}\polylog n = O_D\left(n^{3/2+2\eps'/3}\right) = O_D\left( m^{2/3}n^{2/3} \right) ,
\end{equation*}
so using Lemma \ref{STResultLem}, we obtain
\begin{equation*}
I(\P,\Gamma) = O_D\left( m^{2/3}n^{2/3} + m \right).
\end{equation*}
Henceforth we will assume that $m < n^{5/4+\eps'}$.

The proof for this case proceeds by induction on $m$ and $n$.
Concretely, given $\eps$, $D$, $s$, and $F$, we establish the bound
\begin{equation} \label{indbd}
I(\P,\C) \le A
m^{\frac{2s}{5s-4}} n^{\frac{5s-6}{5s-4}+\eps} + B\left(m + n \right) , 
\end{equation}
for any sets $\P\subset\RR^2$ with $|\P|=m$, $\C\subset F$, $|\C|=n$, and $m < n^{5/4+\eps'}$, where $A = O_{\eps,D,s,\deg(F)}(1)$ and $B=O_{D,s,\deg(F)}(1)$.

The induction, or rather recursion, bottoms out in three cases:
\begin{enumerate}
\item[(i)] We reach a subproblem with fewer than $r$ points, for some suitable constant
parameter $r$ whose value will be set later.
\item[(ii)] We reach a subproblem with $m \ge n^{5/4+\eps'}$
\item[(iii)] We reach a subproblem with $m \le n^{1/s}$.
\end{enumerate}

In all three cases, \eqref{indbd} holds provided we choose $A$ sufficiently large.
This is clear for case (i), and requires some justification for case (ii) (provided below).
For case (iii), we use the fact that the incidence subgraph of $\P\times\C$ is a semi-algebraic graph in $\RR^2\times\RR^s$, and this graph does not contain a large complete bipartite subgraph. 
Indeed, by B\'ezout's theorem, for example, it does not contain a copy of $K_{D^2+1,2}$ as a subgraph. By Corollary 2.3 from Fox et al.~\cite{FPSSZ}, this implies that 
\begin{equation}\label{FPSSZBound}
I(\P,\C) = O_{s,D,\deg(F)}\left( mn^{1-1/s} + n \right).
\end{equation}
If $m\le n^{1/s}$ then this quantity is $O_{s,D,\deg(F)}(n)$. Thus if we choose $A$ sufficiently large, the bound in \eqref{indbd} holds in this case.

Apply Lemma \ref{dualityTransformLem} to $\P$ and $\C$. Let $W\subset\RR^s$ be the resulting set of points, let $\Sigma$ be the resulting set of varieties, and let $\P_{\operatorname{bad}},\
\C_{\operatorname{bad}}$ be the leftover sets of problematic points and curves.

Apply Theorem~\ref{thm:mp} to $W$ with a value of $r$ that will be specified later. Let $K$, $r_1,\ldots,r_s$, $t_1,\ldots,t_s$ be the parameters given by the theorem; let
\begin{equation*}
W = W^* \cup \bigcup_{i=1}^s \bigcup_{j=1}^{t_i} W_{ij}
\end{equation*}
be the corresponding partition of $W$; and for each index $i$ and $j$, let $S_{ij}$ be the connected set that contains 
$W_{ij}$. We have
\begin{equation}\label{notBadIncidences}
I(\P\backslash \P_{\operatorname{bad}},\C\backslash \C_{\operatorname{bad}}) = I(W,\Sigma)  = I(W^*,\Sigma) +
\sum_{i=1}^s \sum_{j=1}^{t_i} \Biggl( I(W_{ij},\Sigma_{ij}) + I(W_{ij},\Sigma^0_{ij}) \Biggr),
\end{equation}
where $\Sigma_{ij}$ (resp., $\Sigma^0_{ij}$) is the set of the surfaces of $\Sigma$
that cross (resp., contain) the corresponding set $S_{ij}$. Since $|\P_{\operatorname{bad}}|=O_{D,\deg(F)}(1)$ and $|\C_{\operatorname{bad}}|=O_{D,\deg(F)}(1)$, we have
\begin{equation} \label{pbadc}
I(\\P_{\operatorname{bad}},\C)=O_{D,\deg(F)}(n) ,
\end{equation}
and 
\begin{equation} \label{pcbad}
I(\P,\C_{\operatorname{bad}})=O_{D,\deg(F)}(m).
\end{equation}
Thus it suffices to bound the contribution from \eqref{notBadIncidences}.

Let $m_{ij} = |\Sigma_{ij}|$, $m^0_{ij} = |\Sigma^0_{ij}|$, and $n_{ij} = |W_{ij}|$,
for each $i$, $j$. We have $n_{ij} \le n/{r_i}$ for each $i,j$, and
\begin{equation*}
\sum_{j=1}^{t_i} m_{ij} \le bmr_i^{1-1/s} ,
\end{equation*}
for each $i$, where $b$ is a constant that depends on $s$ and $D$.

\paragraph{Incidences with crossing surfaces.}
We apply the induction hypothesis to each $I(W_{ij},\Sigma_{ij})$ for which
$m_{ij} = |\Sigma_{ij}| < |W_{ij}|^{5/4+\eps'} = n_{ij}^{5/4+\eps'}$.
For the remaining indices $i$, $j$, where $m_{ij} \ge n_{ij}^{5/4+\eps'}$,
we use the fact that $m < n^{5/4+\eps'}$ (or else we would not have applied
the partitioning to $W$ and $\Sigma$). We can verify that
\begin{equation*}
m^{2/3}n^{2/3} \le m^{\frac{2s}{5s-4}} n^{\frac{5s-6}{5s-4}+\eps} 
\end{equation*}
if and only if
\begin{equation*}
m \le n^{\frac54 + \frac{3\eps(5s-4)}{4s-8}} ,
\end{equation*}
and the latter inequality holds if we ensure that $\eps' \le \frac{3\eps(5s-4)}{4s-8}$.

On the other hand, we have
\begin{equation*}
n^{3/2} \le m^{\frac{2s}{5s-4}} n^{\frac{5s-6}{5s-4}+\eps_1} 
\end{equation*}
if and only if
\begin{equation*}
m \ge n^{\frac54 - \frac{\eps_1(5s-4)}{2s}} .
\end{equation*}
The latter inequality holds for $m_{ij}$ and $n_{ij}$, for any value of $\eps_1$, by assumption.
Hence, when we reach a subproblem of this kind, we use the weak bound from  Lemma \ref{STResultLem}, and get
\begin{align*}
I(W_{ij},\Sigma_{ij}) & = O_{s,D,\deg(F)}\left( m_{ij}^{2/3}n_{ij}^{2/3} + n_{ij}^{3/2} \polylog n \right) \\
& = O_{s,D,\deg(F)}\left( m^{2/3}n^{2/3} + n^{3/2} \polylog n \right) \\
&=O_{s,D,\deg(F)}\left( m^{\frac{2s}{5s-4}} n^{\frac{5s-6}{5s-4}+\eps} \right) .
\end{align*}
Summing this bound over all relevant $i$ and $j$ multiplies the bound by a constant factor
that depends on $s$, $D$, and $r$, so the overall contribution to the incidence count 
by ``borderline'' subproblems of this kind is at most
\begin{equation*}
B_1m^{\frac{2s}{5s-4}} n^{\frac{5s-6}{5s-4}+\eps} ,
\end{equation*}
for a suitable constant $B_1$ that depends on $s$, $D$, $\deg(F)$, and $r$.

For subproblems satisfying $m_{ij} < n_{ij}^{5/4+\eps'}$, we get from the induction hypothesis
\begin{equation*}
I(W_{ij},\Sigma_{ij}) \le A
m_{ij}^{\frac{2s}{5s-4}} n_{ij}^{\frac{5s-6}{5s-4}+\eps} + B\left(m_{ij} + n_{ij} \right) .
\end{equation*}
Therefore, for each fixed $i$,
\begin{equation*}
\sum_{j=1}^{t_i} I(W_{ij},\Sigma_{ij}) \le  
\sum_{j=1}^{t_i} \left(A
m_{ij}^{\frac{2s}{5s-4}} n_{ij}^{\frac{5s-6}{5s-4}+\eps} + B(m_{ij} + n_{ij}) \right) .
\end{equation*}
We have 
\begin{equation} \label{mandn}
\sum_{j=1}^{t_i} m_{ij} \le bmr_i^{1-1/s} ,\quad\quad\text{and}\quad\quad
\sum_{j=1}^{t_i} n_{ij} = |W_i| ,
\end{equation}
where $W_i = \bigcup_{j=1}^{t_i} W_{ij}$. Using H\"older's inequality to bound the sum of the first terms, we obtain
\begin{align} \label{bd:cross}
\sum_{j=1}^{t_i} Am_{ij}^{\frac{2s}{5s-4}} n_{ij}^{\frac{5s-6}{5s-4}+\eps} 
& \le A\sum_{j=1}^{t_i} m_{ij}^{\frac{2s}{5s-4}} n_{ij}^{\frac{3s-4}{5s-4}}
\left( \frac{n}{r_i} \right)^{\frac{2s-2}{5s-4}+\eps} \nonumber \\
& \le A\left( \sum_{j=1}^{t_i} m_{ij} \right)^{\frac{2s}{5s-4}} 
\left( \sum_{j=1}^{t_i} n_{ij} \right)^{\frac{3s-4}{5s-4}}
\left( \frac{n}{r_i} \right)^{\frac{2s-2}{5s-4}+\eps} \nonumber \\
& \le A \left( bmr_i^{1-1/s} \right)^{\frac{2s}{5s-4}} 
|W_i|^{\frac{3s-4}{5s-4}}
\left( \frac{n}{r_i} \right)^{\frac{2s-2}{5s-4}+\eps} \nonumber \\
& = A\frac{b'}{r_i^\eps} 
m^{\frac{2s}{5s-4}} |W_i|^{\frac{3s-4}{5s-4}} n^{\frac{2s-2}{5s-4}+\eps} 
\quad\quad\text{(for $b' = b^{\frac{2s}{5s-4}}$)} \nonumber \\
& \le A\frac{b'}{r^\eps} 
m^{\frac{2s}{5s-4}} |W_i|^{\frac{3s-4}{5s-4}} n^{\frac{2s-2}{5s-4}+\eps} ,
\end{align}
recalling that $r_i\ge r$ for each $i$. 
We now sum these bounds over all $i=1,\ldots,s$, and get a total of at most
\begin{equation*}
\frac{Ab's}{r^\eps} 
m^{\frac{2s}{5s-4}} n^{\frac{5s-6}{5s-4}+\eps} .
\end{equation*}

In total, the number of incidences involving crossing surfaces is at most
\begin{equation} \label{xinc}
\left( \frac{Ab's}{r^\eps} + B_1\right)
m^{\frac{2s}{5s-4}} n^{\frac{5s-6}{5s-4}+\eps} +
\left( Bb\sum_{i=1}^s r_i^{1-1/s} \right) m + Bn .
\end{equation}

\paragraph{Incidences with containing surfaces.}
Fix $i$ and $j$, and consider the incidence count $I(W_{ij},\Sigma^0_{ij})$.
All the points of $W_{ij}$ lie in the corresponding containing set $S_{ij}$,
and all the surfaces of $\Sigma^0_{ij}$ contain $S_{ij}$. Consequently,
every pair in $W_{ij} \times \Sigma^0_{ij}$ is an incident pair. However, by
assumption, the incidence graph between $W_{ij}$ and $\Sigma^0_{ij}$ does not 
contain $K_{2,D^2+1}$. This implies that
\begin{equation*}
I(W_{ij},\Sigma^0_{ij}) \le D^2|W_{ij}| + |\Sigma^0_{ij}| = D^2n_{ij} + m^0_{ij} 
\end{equation*}
(the first (resp., second) term accounts for sets $W_{ij}$ of size at least two (resp., at most one).
Hence, summing these bounds over all $i,j$, using
the trivial bound $m^0_{ij}\le m$, for all $i$, $j$, and the bound $\sum_{i,j} n_{ij} \le n$, we get
\begin{equation} \label{inc-cont}
\sum_{i=1}^s \sum_{j=1}^{t_i} I(W_{ij},\Sigma^0_{ij}) \le 
\sum_{i=1}^s \sum_{j=1}^{t_i} \left( D^2 n_{ij} + m^0_{ij} \right) \le B_2m + D^2 n ,
\end{equation}
where $B_2$ is another constant that depends on $r$, $s$ and $D$.
(It is here that we use the remark following Theorem~\ref{thm:mp}, concerning a bound on the quantities $t_i$.)

Finally, we bound $I(W^*,\Sigma)$ simply by $mr^K$.
Adding all bounds collected so far, in \eqref{pbadc}, \eqref{pcbad}, \eqref{xinc}, and \eqref{inc-cont}, we get a total of at most
\begin{equation*}
\left( \frac{Ab's}{r^\eps} + B_1\right)
m^{\frac{2s}{5s-4}} n^{\frac{5s-6}{5s-4}+\eps} +
B_3 m + B_4 n ,
\end{equation*}
where $B_3$ and $B_4$ are constants that depend on ($A$, $B$, and) $s$, $D$, $\deg(F)$, and $r$. We now observe that
\begin{equation*}
m \le m^{\frac{2s}{5s-4}} n^{\frac{5s-6}{5s-4}} \quad\text{if and only if}\quad
m \le n^{\frac{5s-6}{3s-4}} ,
\end{equation*}
In our case we have the stronger inequality $m < n^{5/4+\eps'}$; it is indeed stronger for $\eps'<1/4$, say,
as can easily be verified. We thus have
\begin{equation*}
B_3 m \le \frac{B_3}{n^\eps} 
m^{\frac{2s}{5s-4}} n^{\frac{5s-6}{5s-4}+\eps}.
\end{equation*}
Similarly, we have
\begin{equation*}
n \le m^{\frac{2s}{5s-4}} n^{\frac{5s-6}{5s-4}} \quad\text{if and only if}\quad
m \ge n^{1/s} ,
\end{equation*}
which also holds by our recursion termination rules. Hence we have
\begin{equation*}
B_4 n \le
\frac{B_4}{n^\eps} 
m^{\frac{2s}{5s-4}} n^{\frac{5s-6}{5s-4}+\eps} .
\end{equation*}
Altogether, the incidence bound is at most 
\begin{equation*}
\left( \frac{Ab's}{r^\eps} + B_1 + \frac{B_3+B_4}{n^\eps} \right)
m^{\frac{2s}{5s-4}} n^{\frac{5s-6}{5s-4}+\eps} .
\end{equation*}
We now take $r$ to be sufficiently large, so as to have $r^\eps > 3b's$ (recalling that $b'$ does not depend on $r$), take $A$ 
sufficiently large so that $B_1 < A/3$, and then require $n$ to be sufficiently large
so that
\begin{equation*}
\frac{B_3+B_4}{n^\eps} < \frac{A}{3} .
\end{equation*}
With these choices, this expression is upper bounded by
\begin{equation*}
A m^{\frac{2s}{5s-4}} n^{\frac{5s-6}{5s-4}+\eps} ,
\end{equation*}
which establishes the induction step and thereby completes the proof of the theorem.
$\Box$

\section{The complexity of a level in an arrangement of curves}
\label{subsec:level}
Recall the definition of a level in an arrangement of curves from Section \ref{complexityOfALevelSec}.
The main tool for establishing bounds on the complexity of levels in
arrangements of curves is an upper bound given by 
Chan~\cite{Ch} on the complexity of a level in an
arrangement of extendible pseudo-segments. 

A collection of $x$-monotone Jordan arcs is \emph{extendible} if each arc can be contained in a $x$-monotone simple curve that divides the plane into exactly two connected components, with the property that these larger curves form a
collection of pseudo-lines (a collection of curves is called a collection of pseudo-lines if the curves are unbounded and every pair of curves intersect at most once). Chan established the following bound on the complexity of a level of an arrangement of extendible pseudo-segments.

\begin{theorem}[Chan~\cite{Ch}, Theorem 2.1]\label{chanThm1}
Let $\Gamma$ be a collection of $n$ extendible pseudo-segments, and let $X=\sum_{\gamma,\gamma^\prime\in\Gamma}|\gamma\cap\gamma^\prime|$. The complexity of a level in $A(\Gamma$) is $O(n+n^{2/3}X^{1/3})$.
\end{theorem}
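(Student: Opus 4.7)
The bound is a Szemer\'edi--Trotter-type estimate for $k$-levels, interpolating between the trivial $O(n)$ when $X=O(n)$ and the classical pseudo-line bound $O(n^{4/3})$ when $X=\Theta(n^2)$. My plan is to adapt Dey's crossing-lemma argument for the $k$-level of a pseudo-line arrangement to this pseudo-segment setting, using the extendibility hypothesis in an essential way.

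First, I would split the $k$-level breakpoints into two types: vertices that are endpoints of some pseudo-segment in $\Gamma$, and vertices that are intersection points between two pseudo-segments. The first type contributes $O(n)$ trivially, since each pseudo-segment has two endpoints and each endpoint causes only $O(1)$ level-changing events. It therefore suffices to bound the number $V_c$ of intersection-type breakpoints by $O(n^{2/3}X^{1/3})$.

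Second, I would use the extendibility hypothesis to extend each $\gamma\in\Gamma$ to a pseudo-line $\tilde\gamma$; the resulting pseudo-line arrangement provides the combinatorial scaffolding on which to define Dey-style convex (or concave) chains of arcs along levels. I would then build an auxiliary multigraph $G$ whose vertex set is (roughly) the $n$ pseudo-segments and whose edges correspond to the crossing-type breakpoints of the $k$-level along a common pseudo-segment. This graph has $\Theta(V_c)$ edges, and it admits a natural drawing derived from the geometry of $\A(\Gamma)$ in which every pair of edges that cross in the drawing arises from an actual intersection of two pseudo-segments in $\Gamma$. In particular, its crossing number is $O(X)$.

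Finally, applying the crossing lemma, $\operatorname{cr}(G)\ge c\,|E(G)|^3/|V(G)|^2$, with $|V(G)|=O(n)$ and $\operatorname{cr}(G)=O(X)$, yields $V_c=O(n^{2/3}X^{1/3})$; combined with the endpoint contribution this gives the stated bound. The hard part will be verifying that the drawing of $G$ really has crossing number $O(X)$ rather than the much larger $\binom{n}{2}$ crossings present in the extended pseudo-line arrangement: extendibility must be used twice, once to provide enough topological structure for the convex-chain decomposition to exist, and once to rule out any ``phantom'' crossing that comes from the extension $\tilde\gamma\setminus\gamma$ rather than from $\gamma$ itself. Ensuring that each pair of edges of $G$ that cross in the drawing can be charged to a genuine point of $\gamma\cap\gamma'$ in $\A(\Gamma)$ is the principal technical step of the proof.
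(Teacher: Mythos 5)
You should first note that the paper does not prove this statement at all: it is quoted verbatim from Chan~\cite{Ch} and used as a black box, so the comparison can only be with Chan's own argument, which is indeed a Dey-style ``chains plus crossing lemma'' proof. Your outline points in that general direction, but as a proof it has a genuine gap at exactly the step you yourself flag as ``the hard part,'' and that step is not a matter of bookkeeping about phantom extensions. If the graph $G$ has the $n$ pseudo-segments as vertices and the crossing-type level vertices as edges, then in any drawing ``derived from the geometry of $\A(\Gamma)$'' the edge corresponding to a level vertex $v\in\gamma\cap\gamma'$ must be routed along $\gamma$ and along $\gamma'$ to reach the two points representing those curves; consequently \emph{all} edges incident to the vertex representing $\gamma$ run along overlapping portions of the same curve $\gamma$. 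A single intersection point $x\in\gamma\cap\gamma'$ can therefore be traversed (after perturbation, passed arbitrarily closely) by many edges travelling along $\gamma$ and many travelling along $\gamma'$, so the number of edge--edge crossings charged to $x$ is a product of two potentially large multiplicities, and the key claim $\operatorname{cr}(G)=O(X)$ fails for this natural drawing. (If it held as stated, the theorem would follow in three lines and extendibility would barely be needed.) Unlike Sz\'ekely's incidence argument, where edges join \emph{consecutive} points along a curve and hence each point of a curve is covered by $O(1)$ edges, your edges are not local, and nothing in the proposal restores the bounded-multiplicity charging.

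Repairing this is precisely the content of Dey's convex-chain technique and of Chan's adaptation of it: the level is decomposed into $O(n)$ chains that are convex (or concave) with respect to the pseudo-line extensions---this is where extendibility is genuinely used, since it supplies a consistent global above/below order that makes the chain decomposition well defined and forces each chain to cross each extension a bounded number of times---and the crossing-lemma graph and its drawing are then built from the chains rather than from the raw pseudo-segments, so that each point counted by $X$ is blamed by $O(1)$ crossing pairs. Your proposal names these ingredients but never says how the chains enter the graph construction or how they cap the multiplicity at an intersection point, and without that the crossing-number estimate, which is the heart of the proof, is missing. (Two smaller remarks: the $O(n)$ bound for endpoint-type breakpoints is fine for a single fixed level; and the worry about crossings contributed by $\tilde\gamma\setminus\gamma$ is not the real difficulty, since the drawing never needs to use the extensions themselves, only the combinatorial order they induce.)
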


In general, a collection of pseudo-segments need not be extendible. However, any collection of pseudo-segments can be cut into a slightly larger collection that is extendible.
\begin{theorem}[Chan~\cite{Ch}, Theorem 3.3]\label{chanThm2}
Any collection of $n$ $x$-monotone pseudo-segments can be cut into a collection of $O(n\log n)$ extendible pseudo-segments.
\end{theorem}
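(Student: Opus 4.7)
The plan is to proceed by divide and conquer on the collection $\Gamma$ of $n$ $x$-monotone pseudo-segments, aiming for a recurrence $f(n) \le 2f(n/2) + O(n)$, which solves to $f(n) = O(n\log n)$. The base case is a single pseudo-segment, which is trivially extendible. For the recursive step, I would split $\Gamma$ into two balanced halves $\Gamma_1, \Gamma_2$, each of size roughly $n/2$. The splitting criterion should be chosen to make the merge step clean; a natural option is to sort the arcs by the order in which they cross a vertical line that meets many of them (e.g., the median-crossing line), and take the top and bottom halves. Recursively cut each half into an extendible collection, yielding families $A$ and $B$ with $|A|+|B| \le 2f(n/2)$, and let $\widetilde A = \{\tilde a\}_{a\in A}$ and $\widetilde B = \{\tilde b\}_{b\in B}$ be the witnessing pseudo-line extensions of each half.

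For the merge step, the goal is to further cut $A\cup B$ into an extendible collection using only $O(n)$ additional cuts. The obstruction is pairs $(a,b)\in A\times B$ whose extensions $\tilde a, \tilde b$ cross more than once: since the underlying arcs $a, b$ cross at most once (the original collection is a set of pseudo-segments and cuts preserve this property), any double crossing of $\tilde a, \tilde b$ must involve at least one extension portion lying outside $a\cup b$. Such a conflict can be resolved either by re-routing the extension (no actual cut needed) or, when forced, by introducing a single cut in $a$ or $b$. The plan is to re-draw $\widetilde A$ and $\widetilde B$ simultaneously so that incompatibilities between the two halves are resolved with a total of $O(n)$ actual cuts; heuristically, conflicts are localized near the interface between the domains of $A$ and $B$ and can be charged to a linear-sized combinatorial object such as the envelope of $A$ seen from $B$ (or vice versa).

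The main obstacle is proving the $O(n)$ bound on the number of cuts needed in the merge step. The strategy I would pursue is a charging argument: identify each necessary cut with a vertex, edge, or face of a planar graph naturally associated to $A\cup B$ (for example, the pseudo-segment arrangement has $O(n)$ vertices, or one can work with the ``overlay'' of the envelopes induced by each half), and show that each such combinatorial feature is charged $O(1)$ times. The delicate point is that re-routing the extensions may create new crossings elsewhere, so one must show that the re-routing process terminates with only a linear net cost. If this charging argument proves too intricate, a fallback plan is a randomized incremental construction: insert the pseudo-segments in a random order while maintaining a pseudo-trapezoidal vertical decomposition, cutting each new arc at every cell boundary it crosses; backwards analysis then gives expected $O(\log i)$ cuts at the $i$-th insertion, totalling $O(n\log n)$, and each resulting cell-confined subarc is easily extended to a pseudo-line within its cell and then globally in a consistent way.
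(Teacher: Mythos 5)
First, a point of reference: the paper does not prove this statement at all --- it is imported verbatim from Chan~\cite{Ch} (Theorem 3.3 there) and used as a black box --- so there is no in-paper proof to match; your sketch has to stand on its own, and it does not. The genuine gap sits exactly where the content of the theorem lies: the merge step. Extendibility is a global property --- one must exhibit a \emph{single} family of pairwise once-crossing $x$-monotone extensions for the whole collection --- so after the two halves are processed there is no reason for conflicts between the two extension families to be ``localized near the interface''; re-routing one extension to remove a double crossing can create new double crossings with many other extensions, and you give no charging scheme or potential argument showing that the process terminates after only $O(n)$ forced cuts. Asserting an $O(n)$-cut merge of two extendible halves is essentially asserting a strengthened form of the theorem, not proving it. The divide step is also not well defined: $x$-monotone pseudo-segments have bounded $x$-extents, so no vertical line need meet more than a constant number of them, and arcs missed by your chosen line are assigned to neither half, so the recurrence $f(n)\le 2f(n/2)+O(n)$ never gets off the ground.

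The fallback is not salvageable as stated either. In a randomized incremental vertical decomposition, the expected number of cells crossed by the $i$-th arc is governed by the number of crossings among the arcs (backwards analysis gives roughly $O(1+X_i/i)$, where $X_i$ counts crossings among the first $i$ arcs), not by $O(\log i)$, which is the point-location depth; since $n$ pseudo-segments can have $\Theta(n^2)$ crossings, cutting each arc at every cell boundary it crosses can produce $\Theta(n^2)$ pieces, not $O(n\log n)$. Moreover, the claim that cell-confined subarcs are ``easily extended to a pseudo-line \ldots globally in a consistent way'' is again precisely the global-consistency difficulty, left unproved. For comparison, Chan's argument in \cite{Ch} does not rely on merging two extendible families at linear cost: it combines a combinatorial criterion for extendibility of $x$-monotone pseudo-segments (an acyclicity condition on an aboveness-type relation) with a hierarchical subdivision by vertical lines under which each curve is cut only $O(\log n)$ times, the criterion then being verified directly for the resulting pieces. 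If you want a self-contained proof, that is the route to pursue --- otherwise cite \cite{Ch}, as the paper does.
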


Combining Theorems \ref{chanThm1} and \ref{chanThm2} with the bounds in Theorems \ref{cuttingCurvesIntoSegments} and \ref{cuttingArcsIntoSegments}, we obtain the following result.

\begin{levelsInArrThm}
Let $\Gamma$ be a set of $n$ Jordan arcs, each of which is contained in an algebraic curve of degree at most $D$, and every pair of which have finite intersection. Then each level of $\A(\Gamma)$ has complexity
$O_D(n^{5/3}\log^{O_D(1)} n)$. 
\end{levelsInArrThm}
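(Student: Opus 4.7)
The plan is to reduce to the pseudo-segment setting via Theorem~\ref{cuttingArcsIntoSegments}, make the resulting arcs $x$-monotone and extendible so that Chan's bounds become applicable, and then combine Theorem~\ref{chanThm2} (to enforce extendibility) with Theorem~\ref{chanThm1} (the level-complexity bound for extendible pseudo-segments). The key arithmetic is that balancing the Theorem~\ref{cuttingArcsIntoSegments} bound $N = O_D(n^{3/2}\log^{O_D(1)}n)$ against the B\'ezout intersection count $X = O_D(n^2)$ inside Chan's formula $O(N + N^{2/3}X^{1/3})$ produces exactly $O_D(n^{5/3}\log^{O_D(1)}n)$.

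First, I would apply Theorem~\ref{cuttingArcsIntoSegments} to $\Gamma$ to obtain a cutting $\Gamma_0$ consisting of $N = O_D(n^{3/2}\log^{O_D(1)}n)$ Jordan arcs, each pair of which meets in at most one point. Every edge of any level of $\A(\Gamma)$ is a union of edges of the corresponding level of $\A(\Gamma_0)$ (the additional cut points only subdivide edges and can only increase the edge count on a level), so it suffices to bound the complexity of a single level of $\A(\Gamma_0)$. To invoke Chan's results we need the arcs of $\Gamma_0$ to be $x$-monotone; since each arc lies on an algebraic curve of degree at most $D$, cutting each at its at most $D-1$ vertical tangencies adds only $O_D(n)$ further cut points, yielding a collection $\Gamma_0'$ of $x$-monotone pseudo-segments with $|\Gamma_0'| = O_D(n^{3/2}\log^{O_D(1)}n)$.

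Next, I would apply Theorem~\ref{chanThm2} to $\Gamma_0'$ to obtain an extendible collection $\Gamma_0''$ of $O(|\Gamma_0'|\log|\Gamma_0'|) = O_D(n^{3/2}\log^{O_D(1)}n)$ extendible pseudo-segments; each level of $\A(\Gamma_0')$ is again refined by a level of $\A(\Gamma_0'')$, so we lose nothing in the reduction. To apply Theorem~\ref{chanThm1} I need a bound on the total number of intersections among arcs of $\Gamma_0''$. Since every arc of $\Gamma_0''$ is contained in one of the original curves, every intersection among arcs of $\Gamma_0''$ is an intersection of two distinct algebraic curves underlying $\Gamma$ (or a point where a single such curve was cut, which contributes nothing to a crossing between distinct arcs); by B\'ezout's theorem the former are $O_D(n^2)$ in number, so $X'' = O_D(n^2)$.

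Plugging $N'' = |\Gamma_0''| = O_D(n^{3/2}\log^{O_D(1)}n)$ and $X'' = O_D(n^2)$ into Theorem~\ref{chanThm1} yields level complexity
\begin{equation*}
O\bigl(N'' + (N'')^{2/3}(X'')^{1/3}\bigr) = O_D\bigl(n^{3/2}\log^{O_D(1)}n + n \cdot n^{2/3} \log^{O_D(1)}n\bigr) = O_D\bigl(n^{5/3}\log^{O_D(1)}n\bigr),
\end{equation*}
which is the desired bound. There is no substantial obstacle here: all the work has already been done in Theorem~\ref{cuttingArcsIntoSegments} and in Chan's two theorems, and the only thing to verify is that the reductions (cutting, enforcing $x$-monotonicity, making the collection extendible) do not erode the level complexity we are trying to bound and do not inflate the intersection count beyond the B\'ezout budget.
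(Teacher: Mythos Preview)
Your proposal is correct and follows essentially the same approach as the paper, which simply states that the result follows by combining Theorems~\ref{cuttingCurvesIntoSegments}/\ref{cuttingArcsIntoSegments} with Chan's Theorems~\ref{chanThm1} and~\ref{chanThm2}. Your write-up supplies the details the paper omits (the observation that cutting only refines levels, the B\'ezout bound $X=O_D(n^2)$ surviving all the cuttings, and the final arithmetic), and there is nothing to correct.
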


This result improves earlier works of Chan~\cite{Ch,Ch2} and Bien~\cite{lilach}
for the case of general algebraic curves, and it almost matches the earlier results
in \cite{ANPPSS,MT} for the case of pseudo-circles and pseudo-parabolas.

\begin{remark}
It is an interesting open problem to obtain a refined bound on the complexity
of the $k$-level which depends on $k$. Such a bound is known for the case of lines
(and pseudo-lines)~\cite{Dey}.
\end{remark}

As noted in \cite{ANPPSS}, the preceding theorem implies the following 
result in the area of kinetic geometry. This significantly extends
the earlier results in \cite{ANPPSS,TT}, which were limited to the case of
constant-velocity motions. 

\begin{corollary} \label{median}
Let $P$ be a set of $n$ points in the plane, each
moving along some algebraic trajectory of degree at most $D$(the coordinates
of the position of a point at time $t$ are polynomials of degree at most $D$). For each time $t$, let $p(t)$ and $q(t)$ be the pair of 
points of $P$ whose distance is the median distance at time $t$. 
The number of times in which this median pair changes is $O_D(n^{10/3}\log^{O_D(1)} n)$. The same bound applies if the median is replaced by any fixed quantile.
\end{corollary}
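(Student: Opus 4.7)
The plan is to reduce the problem to a question about the complexity of a level in an arrangement of algebraic plane curves, and then invoke Theorem \ref{theo:level}. For each unordered pair $\{p_i,p_j\}$ of points of $P$, let $p_i(t) = (x_i(t),y_i(t))$ and $p_j(t) = (x_j(t),y_j(t))$ denote their positions at time $t$, and define
\begin{equation*}
f_{ij}(t) = (x_i(t)-x_j(t))^2 + (y_i(t)-y_j(t))^2 .
\end{equation*}
Each $f_{ij}$ is a univariate polynomial of degree at most $2D$, so its graph $\gamma_{ij} = \{(t,f_{ij}(t)) \mid t\in\RR\}$ is an algebraic plane curve of degree at most $2D$, and it is a Jordan arc (being the graph of a polynomial). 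Let $\Gamma = \{\gamma_{ij}\}_{i<j}$; this is a collection of $N = \binom{n}{2}$ Jordan arcs, each contained in an algebraic curve of degree at most $2D$. Any two distinct curves $\gamma_{ij}$ and $\gamma_{kl}$ intersect at the zeros of a polynomial of degree at most $2D$, and in particular have finite intersection.

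Next, I would identify the median pair with a level. Because the squaring map is monotone on $[0,\infty)$, the ordering of the distances $\|p_i(t)-p_j(t)\|$ agrees with the ordering of the squared distances $f_{ij}(t)$ for every fixed $t$. Hence the pair achieving the median distance at time $t$ is exactly the pair whose curve $\gamma_{ij}$ sits at level $k = \lfloor N/2\rfloor$ in $\A(\Gamma)$ at time $t$ (modulo the usual tiebreaking convention for the median). A change in the median pair therefore corresponds precisely to a vertex of the $k$-level, and the number of such changes is at most the combinatorial complexity of the $k$-level of $\A(\Gamma)$.

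Applying Theorem \ref{theo:level} to $\Gamma$, the complexity of any level of $\A(\Gamma)$ is bounded by
\begin{equation*}
O_D\bigl(N^{5/3}\log^{O_D(1)} N\bigr) = O_D\bigl(n^{10/3}\log^{O_D(1)} n\bigr),
\end{equation*}
which yields the desired bound. Since Theorem \ref{theo:level} bounds the complexity of every level, not just the median level, the same bound applies to any fixed quantile simply by choosing $k$ appropriately.

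I do not anticipate a serious obstacle here; the only minor subtlety is confirming that the hypotheses of Theorem \ref{theo:level} hold for $\Gamma$ (Jordan arcs contained in bounded-degree algebraic curves, with pairwise finite intersections) and that crossings of graphs of squared-distance polynomials genuinely correspond to median-pair changes. The former follows directly from the polynomial description of the trajectories, and the latter uses monotonicity of squaring on the nonnegative reals. Everything else is a straightforward reduction.
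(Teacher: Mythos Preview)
Your proposal is correct and is exactly the standard reduction the paper has in mind: the paper does not spell out a proof but simply states that, as in \cite{ANPPSS}, the corollary follows from Theorem~\ref{theo:level} by associating to each of the $\binom{n}{2}$ pairs the graph of its squared-distance polynomial and reading off the median as a fixed level. Your write-up makes this explicit and handles the bookkeeping (degree $2D$, monotonicity of squaring, level index $k$) correctly.
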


\section{The complexity of many marked faces in an arrangement}\label{sec:manyf}

In this section we prove Theorem \ref{manyfacesCurvesWeakBd}. 
Recall the setup from Section \ref{complexityMarkedFacesIntroSec}: 
Let $\Gamma$ be a set of $n$ Jordan arcs, each of pair of which have finite intersection. Let $\P$ be a set of $m$ points in the plane with the property that no point of $\P$ lies on any curve of $\Gamma$. We define $K(\P,\Gamma)$ to be the sum of the complexities of the faces of $\A(\Gamma)$ that contain at least one point of $\P$, where the complexity of a face is the number of edges of $\A(\Gamma)$ 
on its boundary.

For the reader's convenience, we restate the theorem here.

\begin{manyfacesCurvesWeakBdThm}
Let $\C$ be a set of algebraic plane curves of degree at most $D$, no two of which share a common component. Let $\Gamma$ be a set of $n$ Jordan arcs, each of which is contained in some curve of $\C$, and each pair of which have finite intersection. Let $\P$ be a set of $m$ points in the plane, so that no point of $\P$ lies on any curve of $\C$. Then 
\begin{equation}\tag{\ref{weakptfaces}}
K(\P,\Gamma) = O_D(m^{2/3}n^{2/3}+n^{3/2}\log^{O_D(1)}n).
\end{equation} 
\end{manyfacesCurvesWeakBdThm}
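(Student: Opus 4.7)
The plan is to apply Theorem~\ref{cuttingArcsIntoSegments} to cut $\Gamma$ into a collection of pseudo-segments, to observe that this cutting can only \emph{increase} the total complexity of the marked faces, and then to invoke an intersection-sensitive many-faces bound for pseudo-segments.

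First, since every pair of arcs of $\Gamma$ lies on algebraic curves of degree at most $D$ and has finite intersection, B\'ezout's theorem gives $X := \sum_{\gamma \neq \gamma'}|\gamma \cap \gamma'| = O_D(n^2)$. Applying Theorem~\ref{cuttingArcsIntoSegments} produces a cutting of $\Gamma$ into a collection $\Gamma_0$ of $N = O_D(n^{3/2}\log^{O_D(1)} n)$ pseudo-segments. Since cutting deletes intersections but never creates new ones, the pairwise intersection count $X_0$ among the arcs of $\Gamma_0$ still satisfies $X_0 \leq X = O_D(n^2)$.

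Next, I would prove that $K(\P,\Gamma) \leq K(\P,\Gamma_0)$. The set $\bigcup\Gamma_0$ equals $\bigcup\Gamma$ with the finitely many cut points deleted, so $\A(\Gamma_0)$ is a coarsening of $\A(\Gamma)$: every face $F$ of $\A(\Gamma)$ is contained in a unique face $\widetilde F$ of $\A(\Gamma_0)$, and $\widetilde F$ is obtained from $F$ by merging with the neighboring faces of $\A(\Gamma)$ across any cut point that does not lie on another arc of $\Gamma$. A local analysis at each such ``pure'' cut point $p$, lying in the relative interior of an edge $e$ shared by two faces $F_1$ and $F_2$, shows that when $F_1$ and $F_2$ merge into $\widetilde F$, the edge $e$ splits into two sub-edges $e_1,e_2$ of $\A(\Gamma_0)$, each of which touches $\widetilde F$ from both sides, so
\begin{equation*}
\mathrm{complexity}(\widetilde F) \;=\; \mathrm{complexity}(F_1) + \mathrm{complexity}(F_2) + 2.
\end{equation*}
Iterating over the pure cut points in the closure of $\widetilde F$ shows that $\mathrm{complexity}(\widetilde F)$ dominates the sum of complexities of all $\A(\Gamma)$-faces contained in $\widetilde F$, including those marked by $\P$. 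Summing over all $\widetilde F$ yields $K(\P,\Gamma) \leq K(\P,\Gamma_0)$.

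Third, I would bound $K(\P,\Gamma_0)$ using the known many-faces bound for pseudo-segments, \emph{in its form sensitive to the actual intersection count $X_0$ rather than just the curve count $N$}:
\begin{equation*}
K(\P,\Gamma_0) \;=\; O\!\bigl(m^{2/3} X_0^{1/3} + m + N \polylog N\bigr).
\end{equation*}
This is obtained by the same Sz\'ekely-style crossing-lemma argument used in the remark after Lemma~\ref{STResultLem} to derive the refined pseudo-segment incidence bound, lifted to the many-faces setting (cf.~\cite{AAS,ANPPSS}). Substituting $X_0 \leq O_D(n^2)$ and $N = O_D(n^{3/2}\log^{O_D(1)} n)$, and observing that we may assume $m \leq O_D(n^2)$ (since otherwise we collapse points sharing a face of $\A(\Gamma)$ to a single representative without changing $K$), gives the claimed bound~\eqref{weakptfaces}.

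\emph{Main obstacle.} The substantive step is the third one. The crude many-faces bound $O(m^{2/3}N^{2/3}+N)$ for $N$ pseudo-segments would yield only $O(m^{2/3}n\polylog + n^{3/2}\polylog)$, missing the target first term by a factor of $n^{1/3}$. One must invoke the refined bound in terms of $X_0$, which delivers the desired $m^{2/3}n^{2/3}$ precisely because $X_0 \leq X = O_D(n^2)$. Verifying that the standard pseudo-segment many-faces analysis does deliver this $X_0$-sensitive form, rather than merely the weaker $N$-sensitive one, is the main technical work; a secondary, bookkeeping-style point is the merger accounting in Step~2 needed to ensure the clean inequality $K(\P,\Gamma) \leq K(\P,\Gamma_0)$.
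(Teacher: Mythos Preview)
Your proposal is correct and follows essentially the same approach as the paper. The paper's proof is a two-line application of \cite[Theorem~3.5]{AAS}, which gives exactly the intersection-sensitive bound $O(m^{2/3}X^{1/3}+N\log^2 N)$ for $N$ pseudo-segments with $X$ crossings that you identify as the crucial ingredient; your Step~2 (the face-merging inequality $K(\P,\Gamma)\le K(\P,\Gamma_0)$) makes explicit a passage the paper treats as routine.
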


\begin{proof}
The bound is an immediate consequence of the results in~\cite{AAS}, combined with
Theorems \ref{cuttingCurvesIntoSegments} and \ref{cuttingArcsIntoSegments}. Specifically, Theorem 3.5
in \cite{AAS} asserts that the complexity of $m$ marked faces in an arrangement of $N$ pseudo-segments with $X$ intersection points is
$O(m^{2/3}X^{1/3} + N\log^2 N)$. Applying this bound to the collection of pseudo-segments produced in
Theorem~\ref{cuttingCurvesIntoSegments} or Theorem~\ref{cuttingArcsIntoSegments} yields the bound stated in \eqref{weakptfaces}.

We note that the bound \eqref{weakptfaces} parallels the weak incidence bound in \eqref{boundOnIPC},
except for the missing term $O(m)$ and the fact that the exponent in
the polylogarithmic factor is now larger by $2$. 
We also note that the term $O(N\log^2N)$ reduces to $O(N\log N)$ when the pseudo-segments are extendible; the extra logarithmic factor comes 
from Theorem~\ref{chanThm2}.
\end{proof}

\subsection{Discussion}\label{markedFacesDiscussionSec}
As in the case of incidences, one would like to improve Theorem \ref{manyfacesCurvesWeakBd} and obtain a refined bound, similar to that in
Theorem~\ref{incidencesPtsCurves}. However, the case of many faces is considerably more difficult, and it raises several
technical issues that, so far, we do not know how to overcome. We briefly discuss these difficulties, and leave this extension as
an interesting open problem.

The approach, as in the case of incidences, would be to pass to the dual $s$-dimensional space. In the dual space, curves become points and the marking points become algebraic varieties. We even have a slight advantage here, because we can perturb the marking points slightly to ensure that they are in general position. One would then apply a polynomial partitioning in the dual space, apply the bound of
Theorem \ref{manyfacesCurvesWeakBd} within each cell, and combine the bounds into a global bound for the whole problem.
However, there are several major issues that arise here.

\medskip

\noindent{\bf (a)}
Within a cell $\tau$ of the partition, we have a subset $\P_\tau$ of points of $\P$ whose dual surfaces cross $\tau$, and a subset $\Gamma_\tau$
of curves of $\Gamma$ whose dual points lie in $\tau$. The recursive subproblem at $\tau$ would then be to bound the complexity of the faces
marked by the points of $\P_\tau$ in the arrangement $\A(\Gamma_\tau)$. However, this is not enough, as the points of
$\P\backslash\P_\tau$ also mark faces of $\A(\Gamma_\tau)$, and we have to estimate the complexity of these faces as well. Informally, this is an
effect of the ``non-local'' nature of the curve-face incidence relation: in contrast to the case of point-curve incidences, the property that a curve $\gamma$ bounds the face marked by a point $p$ is a global property that depends on the whole collection of curves and not just on $p$ and $\gamma$. In general, the complexity of these (many) additional faces of $\A(\Gamma_\tau)$ could be too large for the recursive analysis to
yield the desired improved bound. 

\medskip

\noindent{\bf (b)}
As in the case of incidences, we need to bootstrap the recursion at subproblems for which $|\P_\tau|$ is much smaller than $|\Gamma_\tau|$.
Concretely, if we are to obtain the same bound as for incidences, the threshold would be
$|\P_\tau| \le |\Gamma_\tau|^{1/s}$. We would then need to argue that in this case the complexity of the marked faces is linear, or at least
close to linear, in $|\Gamma_\tau|$. Again, the non-local nature of the problem makes it sifficult to show this. For example, we do not know
whether the machinery in Fox et al.~\cite{FPSSZ} can be applied here, as it was in the case of incidences.

\medskip

\noindent{\bf (c)}
When combining the bounds obtained at the recursive subproblems into a global bound, there are several additional technical issues that are more challenging when bounding the complexity of marked faces rather than incidences. For example, unlike the case of incidences, we cannot just add up the recursive bounds. This is because the structure of faces in an arrangement obtained by overlaying several sub-arrangements can become quite involved. Fortunately, the techniques in Agarwal et al.~\cite{AAS} provide a solution to this particular issue.

\appendix 
\section{Cutting depth cycles: Proof of Lemma \ref{cutSpaceCurves} }\label{proofOfLemCutSpaceCurvesSec}
In this section we will prove Lemma \ref{cutSpaceCurves}. For the reader's convenience, we reproduce it here.
\begin{cutSpaceCurvesLem}
For each $D\geq 1$, there are constants $A = A(D)$ and $\kappa=\kappa(D)$ so that the following holds. 
Let $\C$ be a set of $n$ irreducible algebraic plane curves of degree at most $D$ and let $\hat\C=\{\hat C \mid C\in\C\}$. 
Then by using $\leq A n^{3/2} \log^{\kappa}n$ cuts, $\hat \C$ can be cut into a set of Jordan arcs that have no proper depth cycles of length two. 
\end{cutSpaceCurvesLem}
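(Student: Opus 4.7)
My plan is to follow the polynomial-partitioning approach of Aronov and Sharir~\cite{ArS1}, adapted here to algebraic space curves whose degree is at most $D^{2}$ (by Lemma~\ref{liftOfACurve}). The backbone of the argument is a partition of $\RR^{3}$ by the zero set of a suitable polynomial, combined with a recursive attack on the cells of the partition and a separate handling of the sub-arcs that land on the partitioning surface.

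Concretely, I would apply Guth's polynomial partitioning theorem for curves~\cite{Gut} to $\hat{\C}$: for a parameter $r$ to be chosen later (a small polylog of $n$), there is a polynomial $Q\in\RR[x,y,z]$ of degree $r$ such that $\RR^{3}\setminus Z(Q)$ is a disjoint union of $O(r^{3})$ open cells, each crossed by at most $O_D(n/r^{2})$ curves of $\hat{\C}$. I would then cut each $\hat{C}\in\hat{\C}$ at every point of $\hat{C}\cap Z(Q)$; by B\'ezout's theorem this uses $O_D(r)$ cuts per curve and $O_D(nr)$ cuts overall, and leaves every surviving sub-arc either inside one open cell or on $Z(Q)$. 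Inside each open cell the lemma would be applied recursively to at most $O_D(n/r^{2})$ sub-arcs. The sub-arcs on $Z(Q)$ form a separate subproblem on a $2$-dimensional surface of degree $r$; this I would dispose of by a secondary polynomial partitioning on $Z(Q)$ and a dimension-reduction argument of the type used in~\cite{Gut,ArS1}, so that its contribution is a lower-order term.

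The hardest step, and the place where the sketch in~\cite{ArS1} needs to be replaced with a careful argument in the algebraic setting, is the treatment of proper depth cycles whose two witness points lie in different cells of the $\RR^{3}$ partition. A partition of $\RR^{3}$ does not in general separate two sub-arcs that share the same vertical projection, so such cross-cell cycles could in principle survive the boundary cuts. The key claim I would establish is that in any such cycle at least one of the two participating sub-arcs must travel through $Z(Q)$ between its two witness points, and is therefore already broken by the $O_D(nr)$ boundary cuts performed above. Once this charging is in place, writing $T(n)$ for the required number of cuts, the construction yields a recurrence roughly of the form
\[
T(n) \;\le\; c\,r^{3}\,T\!\bigl(c_{0}\,n/r^{2}\bigr)\;+\;O_D\!\bigl(nr\bigr),
\]
which, with $r$ chosen as a small polylog of $n$ and $\kappa(D)$ taken sufficiently large to absorb the cumulative logarithmic overhead from the cross-cell charging and the $Z(Q)$ subproblem, closes at $T(n) = O_D\bigl(n^{3/2}\log^{\kappa(D)}n\bigr)$, as required.
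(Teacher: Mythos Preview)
Your overall plan---polynomial partitioning in $\RR^3$ plus recursion, in the spirit of \cite{ArS1}---is the same as the paper's. But the ``key claim'' you single out as the hardest step is false as stated, and this is where the argument breaks.

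You assert that if two sub-arcs $\gamma\subset\hat C$ and $\gamma'\subset\hat C'$ lie in \emph{different} open cells of $\RR^3\setminus Z(Q)$ and form a proper depth cycle, then at least one of them must pass through $Z(Q)$ between its two witness points. That is not true: being in different cells only forces the \emph{vertical segment} joining the witness points to cross $Z(Q)$, not either of the arcs themselves. Concretely, take $Q=z^2-y$, so the two cells are $\{z^2<y\}$ and $\{z^2>y\}$. Let $\gamma$ be the lift of the line $y=1$, namely $\{(t,1,0)\}$, which lies in $\{z^2<y\}$; let $\gamma'$ be the lift of the parabola $y=2x^2-1$, namely $\{(s,2s^2-1,4s)\}$, which lies in $\{z^2>y\}$ since $(4s)^2-(2s^2-1)=14s^2+1>0$. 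Their $xy$-projections meet at $(\pm1,1)$; at $x=-1$ the $z$-heights are $0>-4$ while at $x=1$ they are $0<4$. This is a proper depth cycle between arcs in distinct cells, yet neither arc ever touches $Z(Q)$. What the paper does instead is introduce the vertical cylinder
\[
Z_{\operatorname{bad}}=\pi^{-1}\Big(\overline{\pi\big(Z(f)\cap Z(\partial_z f)\big)}\Big)
\]
over the critical values of the vertical projection of $Z(f)$, and cut every curve of $\hat\C$ at its intersections with $Z_{\operatorname{bad}}$ (in the example $Z_{\operatorname{bad}}=\{y=0\}$, and indeed $\gamma'$ crosses it while $\gamma$ does not). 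One then shows, via the function $h(p)=|\rho_-(p)\cap Z(f)|$ which is locally constant off $Z_{\operatorname{bad}}$, that no cross-cell (or on-surface) cycle can survive these cuts. Since $Z_{\operatorname{bad}}$ has degree $O(E^2)$ rather than $O(E)$, this step costs $O_D(E^2 n)$ cuts, not $O_D(En)$.

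A second, smaller gap: with $r$ chosen as a polylog of $n$ your recurrence does not close at a polylogarithmic overhead. In $T(n)\le c\,r^3\,T(c_0 n/r^2)+O_D(nr)$ the branching factor $r^3$ exactly cancels the $r^{-3}$ coming from $(n/r^2)^{3/2}$, so what remains per level is the constant $c\,c_0^{3/2}$. With $r=\operatorname{polylog} n$ there are $\Theta(\log n/\log\log n)$ levels of recursion, and the accumulated factor is $\exp\big(\Theta(\log n/\log\log n)\big)$, which is $n^{o(1)}$ but not $\log^{O(1)}n$. The paper takes $E=n^{1/4}$ (any fixed fractional power would do), so that $\log(n/E^2)\le\tfrac34\log n$; combined with the $O_D(E^2n)=O_D(n^{3/2})$ cut budget for $Z_{\operatorname{bad}}$, the induction then closes with $\kappa$ chosen so that $C_1(3/4)^\kappa\le\tfrac12$.
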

\begin{proof}
We will make crucial use of Guth's result~\cite{Gut} about polynomial partitioning for varieties. 
The result in \cite{Gut} is fairly general; here we only state the special case that we need.
\begin{theorem}[Polynomial partitioning for varieties, special case]\label{guthPartitionThm}
Let $\mathcal{K}$ be a set of algebraic curves in $\RR^3$ each of which has degree at most $D$. For each $E \geq 1$, there is a non-zero
polynomial $f$ of degree at most $E$ so that each of the $O(E^3)$ connected components of $\RR^3\backslash Z(f)$ intersects
$O_{D}(|\mathcal{K}|/E^2)$ curves of $\mathcal{K}$.
\end{theorem}

Apply Theorem \ref{guthPartitionThm} to $\hat C$ to obtain a partitioning polynomial $f$ of degree at most $E$ 
whose zero set cuts $\RR^3$ into $O(E^3)$ cells (open connected sets), each of which 
intersects $O_D(n/E^{2})$ curves from $\hat\C$.

Let $\hat C$, $\hat C^\prime\in\hat \C$, and let $\gamma\subset\hat C,\ \gamma^\prime\subset\hat C^\prime$ be closed Jordan space arcs 
that form a proper depth cycle of length two. As in the proof of Lemma~\ref{lensesAndDepthCycles},
we assume that $\gamma$ and $\gamma'$ are chosen so that their $xy$-projections have common endpoints and disjoint relative interiors.
Then one of the following must occur:
\begin{enumerate}
 \item[(i)] $\gamma$ and $\gamma^\prime$ are both contained in the same cell of $\RR^3\backslash Z(f)$.
 \item[(ii)] Each of $\gamma$ and $\gamma^\prime$ is contained in a cell, but these cells are distinct. 
 \item[(iii)] At least one of $\gamma,\ \gamma^\prime$ properly intersects $Z(f)$.
 \item[(iv)] Both $\gamma$ and $\gamma^\prime$ are contained in $Z(f)$.
\end{enumerate}

\begin{lemma}\label{cuttingToEliminateiiIv}
We can cut $\hat\C$ into $O_D(E^2n)$ Jordan arcs, so that after the cutting, every depth 2-cycle of type (ii), (iii), and (iv) has been
eliminated. 
\end{lemma}
\begin{proof}
We will begin with depth cycles of type (iii). First, observe that each curve from $\hat\C$ that is not
contained in $Z(f)$ intersects $Z(f)$ in $O_{D}(E)$ points. This follows from the main theorem in \cite{BB} (see also \cite[Theorem A.2]{ST}). Thus if we remove $O_D(En)$ points in total, we have eliminated all depth cycles of type (iii).

We will now describe a procedure that eliminates all depth cycles of type (ii) and (iv). Let $\pi(x,y,z)=(x,y)$ be the projection to the $xy$-plane. 
In the arguments below we will sometimes need to work over the complex numbers, so we will abuse notation slightly and let $\pi$ refer to
both the $xy$-projections $\RR^3\to\RR^2$ and $\CC^3\to\CC^2$.

Define 
\begin{equation*}
 Z_{\operatorname{bad}} = \pi^{-1}\Big(\ \overline{\pi\big(Z_{\CC}(f)\cap Z_{\CC}(\partial_z f)\big)}\ \Big),
\end{equation*}
where $\overline{X}$ denotes the Zariski closure of a set $X$. Write $Z_{\CC}(f)$ as a union of irreducible components $Z_1\cup\cdots\cup Z_\ell$ and write $Z_{\CC}(\partial_z f)$ as a union of irreducible components $Y_1\cup\cdots\cup Y_m$. Each of these components is a surface, and 
\begin{equation*}
\sum_{i=1}^\ell \deg Z_i = \deg Z_{\CC}(f)\leq E,\quad\sum_{i=1}^\ell \deg Y_i = \deg Z_{\CC}(\partial_z f)\leq E.
\end{equation*}
For each index $1\leq i\leq \ell$ and $1\leq j\leq m$, either $Z_i=Y_j$, or $Z_i\cap Y_j$ is an algebraic curve of degree at most $(\deg Z_i)(\deg
Y_j)$. If the former occurs then $\partial_z f$ vanishes on $Z_i$, and this implies that $Z_i=Z_{\CC}(g)$ for some polynomial $g(x,y,z)=g(x,y)$. This
means that $Z_i$ is a vertical cylinder above a the curve $\{(x,y)\in\CC^2\mid g(x,y)=0\}$ in the $xy$-plane. 

We conclude that $Z_{\CC}(f)\cap Z_{\CC}(\partial_z f)\subset\CC^3$ is a union of a complex algebraic space curve of
degree $\leq E^2$ (the union of all curves of the form $Z_i\cap Y_j$ with $Z_i\neq Y_j$) and a vertical cylinder over an algebraic plane curve of degree at most $E$ (the union of all surfaces $Z_i$ with $Z_i=Y_j$ for some indices $i,j$). Thus $\overline{\pi(Z_{\CC}(f)\cap Z_{\CC}(\partial_z f))}$ is a plane curve of degree at most $E^2 + E\leq 2E^2$, and therefore $ Z_{\operatorname{bad}} \subset\CC^3$ is an algebraic variety of degree at most $2E^2$, which is a vertical cylinder over a plane curve of degree at most $2E^2$. 

Define 
\begin{equation*}
 \hat\C_{\operatorname{bad}} = \{\hat C\in\hat\C \mid \hat C\subset Z_{\operatorname{bad}}\}.
\end{equation*}
Here we abuse notation slightly and say that the set $\hat C\subset\RR^3$ is contained in $Z_{\operatorname{bad}}\subset\CC^3$ if the embedding of $\hat C$ into $\CC^3$ is contained in
$Z_{\operatorname{bad}}$ (alternatively, if $\hat C$ is contained in the real locus of $Z_{\operatorname{bad}}$).

Since $\pi(\hat C)\cap\pi(\hat C^\prime)$ is finite for every pair of curves $\hat C,\hat C^\prime\in\hat\C$ (indeed, the intersection is just the set $C\cap C^\prime$, which by B\'ezout's
theorem has cardinality at most $D^2$), we have that the $xy$-projection of each curve in $\hat\C_{\operatorname{bad}}$ is contained in a distinct irreducible component of
$\pi(Z_{\operatorname{bad}})$. Since the latter is an algebraic plane curve of degree at most $2E^2$, we conclude that $| \hat\C_{\operatorname{bad}}|\leq 2E^2$. Cut each of the curves in
$\hat\C_{\operatorname{bad}}$ at every point where they pass above or below another curve from $\hat\C$. Doing so adds $O_D(E^2 n)$ cuts in total.

For each curve $\hat C\in\hat\C\backslash \hat\C_{\operatorname{bad}}$, add a cut at each point of $\hat C\cap Z_{\operatorname{bad}}$. For each of these curves, we add $O_D(E^2)$ cuts, so in
total there are $O_D(E^2n)$ cuts of this kind too. After cutting each curve in $\hat C$ an additional $O_D(1)$ times, we can ensure that the resulting connected components are smooth Jordan
space arcs, and the tangent vector of these arcs never points in the $z$-direction (i.e., the tangent vector is never parallel to the vector $(0,0,1)$). 

We claim that all cycles of type (ii) and (iv) have been eliminated. For each point $p=(x_0,y_0,z_0)\in\RR^3$, let 
\begin{equation*}
\rho_-(p)=\{(x_0,y_0,z)\in\RR^3 \mid z\leq z_0\}.
\end{equation*}
This is the closed ray emanating from $p$ in the negative $z$-direction. Let $h(p)$ be the number of intersection points of $\rho_-(p)$ with $Z(f)$. Observe that $h(p)\geq 0$, and $h(p)$ is
finite unless $p\in Z_{\operatorname{bad}}$ (indeed, if $h(p)$ is infinite, then $f$ must vanish on the vertical line passing through $p$, so in particular, $p\in Z_{\CC}(\partial_zf)\cap Z_{\CC}(f)$).

Note that if $p=(x,y,z)$, $p^\prime=(x,y,z^\prime)$ are points with $z^\prime>z$, then $h(p^\prime)\geq h(p)$. Furthermore, if $p$ and $p^\prime$ lie in different cells of $\RR^3\backslash Z(f)$, then 
\begin{equation}
h(p^\prime)>h(p).
\end{equation}
Similarly, if both $p$ and $p^\prime$ lie in $Z(f)\backslash Z_{\operatorname{bad}}$, then 
\begin{equation}
h(p^\prime)>h(p).
\end{equation}
(indeed, the point $p'$ is counted in $h(p')$ but not in $h(p)$).

Now, let $\hat C,\hat C^\prime$ be curves from $\hat\C\backslash\hat\C_{\operatorname{bad}}$ that form a proper depth cycle of length two of type (ii) or (iv). In particular, this means there
are points $(x_1,y_1,z_1),(x_2,y_2,z_2)\in \hat C,\ (x_1,y_1,z_1^\prime),(x_2,y_2,z_2^\prime)\in\hat C^\prime,$ with $z_1>z_1^\prime$ and $z_2<z_2^\prime.$ By what has just been argued, we
then have $h(z_1)>h(z_1^\prime)$ and $h(z_2)<h(z_2^\prime)$.

In particular, if $\gamma\subset\hat C$ and $\gamma^\prime\subset\hat C^\prime$ are the Jordan arcs with endpoints $(x_1,y_1,z_1),(x_2,y_2,z_2)$ and $(x_1,y_1,z_1^\prime)$ and
$(z_2,y_2,z_2^\prime)$, respectively, then $h(p)$ cannot be constant on both $\gamma$ and $\gamma^\prime$. This will imply, however, that either $\gamma$ or $\gamma^\prime$ must intersect $Z_{\operatorname{bad}}$, as the following lemma shows.

\begin{lemma}
Let $\alpha\subset\RR^3$ be a smooth simple curve and let $p_0\in\alpha$. Suppose that the tangent vector to $\alpha$ at $p_0$ does not point in the vertical direction. If $p_0\not\in Z_{\operatorname{bad}}$, then $h(p)$ is constant for all $p\in\alpha$ in a small neighborhood of $p_0$.
\end{lemma}
\begin{proof}
In short, the result follows from the implicit function theorem. Restricting $\alpha$ to a sufficiently small neighborhood of $p_0$ if necessary, we can assume that the projection
$\pi(\alpha)$ is a simple smooth plane curve. Thus the set $\pi^{-1}(\pi(\alpha))$ is a vertical cylindrical strip above the plane curve $\pi(\alpha)$. By further restricting $\alpha$ if
necessary, we have that $Z(f)\cap\pi^{-1}(\pi(\alpha))$ is a pairwise disjoint union of simple smooth curves, and at every point on each of these curves, the tangent vector of the curve does not point in the vertical direction. This is because every point of $Z(f)\cap \pi^{-1}(\pi(\alpha))$ is a smooth point of $Z(f)$, and $Z(f)$ intersects $\pi^{-1}(\pi(\alpha))$ transversely at every point of $Z(f)\cap \pi^{-1}(\pi(\alpha))$. 

In particular, this means that by further restricting $\alpha$ if necessary, a vertical line passing through $\alpha$ intersects each of these curves in exactly one point. We conclude that $h(p)$ is constant for all $p\in\alpha$.
\end{proof}
\begin{corollary}
If we remove every point $p$ from each curve $\hat C\in\hat\C\backslash\hat\C_{\operatorname{bad}}$ for which either $p\in Z_{\operatorname{bad}}$ or $\hat C$ has a vertical tangent vector at
$p$, then we have eliminated all depth cycles of type (ii) and (iv).
\end{corollary}
Note that in eliminating the depth cycles of type (ii), (iii), and (iv), we have made a total of $O_D(E^2n)$ cuts. This concludes the proof of Lemma \ref{cuttingToEliminateiiIv}.
\end{proof}

We are now ready to prove Lemma \ref{cutSpaceCurves}. We will do so using induction on $n$. When $n$ is smaller than some constant threshold $n_0$ (that the following analysis will implicitly
specify),
the result is immediate. Indeed, we can cut each curve $\hat C\in\hat\C$ at each point whose $xy$-projection is also incident to the $xy$-projection of another curve, and this requires at most
$D^2\binom{n}{2} = O(D^2n^2)$ cuts. The latter quantity is bounded by $An^{3/2}\log^\kappa n$, provided we choose $A$ sufficiently large (depending on $n_0$). 

Now suppose the result has been proved for all collections of curves of cardinality at most $n-1$. Let $E=n^{1/4}$ (the exact value of $E$ does not matter; any fixed fractional power not
larger than $n^{1/4}$ would suffice). Applying Lemma \ref{cuttingToEliminateiiIv}, we can eliminate all
depth cycles of type (ii)--(iv) using $O_D(E^2n)=O_D(n^{3/2})$ cuts. It remains to consider cycles of type (i). However, each cell contains $O_D(n/E^2)$ curves from $\hat\C$. Thus if we apply the induction hypothesis inside each cell, we conclude that the number of cuts needed to eliminate all cycles of type (i) is 

\begin{equation*}
  O(E^3) \cdot \Big(A\ O_{D}(n/E^{2})^{3/2} \log^\kappa[O_{D}(n/E^{2})] \Big)\leq C_1 A n^{3/2}(\log n - 2\log E + C_2)^\kappa,
\end{equation*}
where $C_1$ and $C_2$ are constants depending only on $D$. Since $E=n^{1/4}$, we have $\log n - 2\log E = \frac12\log n$. If $n$ is sufficiently large compared to $C_2$, then 
$\log n - \log E + C_2\leq \frac34\log n$. If we select $\kappa=O_D(1)$ sufficiently large (depending on $C_1$), so as to make
$C_1\left(\frac34\right)^\kappa \le \frac12$, the number of cuts needed to eliminate all cycles of type (i) is at most
\begin{equation*}
C_1 A n^{3/2}(\log n - 2\log E + C_2)^\kappa\leq \frac{1}{2} A n^{3/2}\log^\kappa n.
\end{equation*}

Adding up the number of cuts required to eliminate all cycles of types (i)--(iv), we conclude that the total number of cuts needed is at most
\begin{equation}
\frac{1}{2} A n^{3/2}\log^\kappa n + C_3E^2n \le
\frac{1}{2} A n^{3/2}\log^\kappa n + C_3n^{3/2} ,
\end{equation}
where $C_3$ is another constant that depends only on $D$ (recall that $E=n^{1/4}$). Choosing $A>2C_3$, the total number of cuts is at most
\begin{equation*}
\left(\frac12 A + C_3\right) n^{3/2}\log^\kappa n \le
A n^{3/2}\log^\kappa n .
\end{equation*}
This closes the induction and completes the proof.
\end{proof}

\section{Algebraic families of curves versus degrees of freedom}\label{dimFamilyVsDegreesOfFreedom}
In this section we will compare the conditions imposed by Theorem \ref{incidencesPtsCurves} with the conditions from Pach and Sharir's incidence theorem. Since Theorem \ref{incidencesPtsCurves} only applies to algebraic curves, we will restrict our attention to curves of this type. It is worth noting, however, that Pach and Sharir's incidence theorem does not require that the curves be algebraic. 

Pach and Sharir's incidence theorem applies to sets of points $\P\subset\RR^2$  and sets of (algebraic) plane curves $\C$ that have $s$ degrees of freedom and multiplicity type $t$. This means that each pair of curves from $\C$ intersect at most $t$ times, and for any set of $s$ points from $\mathcal{P},$ there are at most $t$ curves from $\C$ containing all $s$ points. The property of having $s$ degrees of freedom (and multiplicity type $t$) depends only on the finite collections $\P$ and $\C$. While $\C$ might be (secretly) taken from some pre-specified family of curves, Pach and Sharir's incidence theorem has no knowledge of this family. 

In practice, however, the curves in $\C$ \emph{are} taken from a pre-specified family of curves, and we usually prove that $(\P,\C)$ has $s$ degrees of freedom by arguing that \emph{any} set of
curves from this family (and any set of points) has $s$ degrees of freedom. The following theorem will show that, under some mild conditions, if $F$ is a family of curves, and if \emph{every} finite set of curves from $F$ has $s$ degrees of freedom, then $F$ must be an $s$-dimensional family of curves. The converse is more subtle and will be discussed further at the end of this section.

Let $F\subset\mathbf{P}\mathbb{R}^{\binom{D+2}{2}}$ be an irreducible variety. We say that $F$ is \emph{degenerate} if there is an irreducible curve $\gamma_0\subset\RR^2$ that is a component
of every curve $\gamma\in F$. Degenerate families are not interesting from the point of view of incidence geometry, since it is possible to select $n$ curves from a degenerate family and $m$
points that yield $mn$ incidences. Generally, we rule out this situation by requiring that no two curves share a common component. If the curves are chosen from a degenerate family then the only way this condition can be satisfied is if there is only one curve.

If $F\subset\mathbf{P}\mathbb{R}^{\binom{D+2}{2}}$ is a (possibly reducible) variety, we say $F$ is degenerate if any of its irreducible components are degenerate. In practice, we can throw away any degenerate components of $F$, since there are $O_{D,\deg(F)}(1)$ degenerate components total, and if we require that no two curves share a common component then at most one curve can be chosen from each degenerate component of $F$.

\begin{theorem}\label{degFreedomImpliesDimensionOfF}
Let $F\subset\mathbf{P}\mathbb{R}^{\binom{D+2}{2}}$ be a non-degenerate family of curves. Suppose that there exist constants $s$ and $t$ so that for every finite set $\mathcal{P}\subset\RR^2$
and every finite set of curves $\C\subset F$, no two of which share a common component, the collection $(\mathcal{P},C)$ has $s$ degrees of freedom and multiplicity type $t$. Then $\dim(F)\leq
s$, i.e. the dimension of $F$ is at most the (worst-case) number of degrees of freedom of a collection of curves taken from $F$ (with respect to any finite point set $\P$). 
\end{theorem}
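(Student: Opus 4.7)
The plan is to argue by contrapositive: assuming $\dim F=d>s$, I will exhibit $s$ real points $\P_0=\{p_1,\ldots,p_s\}\subset\RR^2$ and $t+1$ curves $\C_0=\{C_1,\ldots,C_{t+1}\}\subset F$, no two of which share an irreducible component, with every $C_i$ passing through every $p_j$. Applied to $(\P_0,\C_0)$, this directly contradicts the $s$-degrees-of-freedom hypothesis. Since non-degeneracy of $F$ is defined componentwise in the excerpt, every irreducible component of $F$ is itself non-degenerate, so I may pass to a top-dimensional irreducible component and thereby assume $F$ is irreducible of dimension $d>s$.

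The core of the construction is a dimension count on the incidence variety
\begin{equation*}
I_s=\{(p_1,\ldots,p_s,C)\in(\RR^2)^s\times F\,:\,p_1,\ldots,p_s\in C\}.
\end{equation*}
Writing $H_p\subset\mathbf{P}\RR^{\binom{D+2}{2}}$ for the hyperplane of polynomials vanishing at $p$, the fiber of $\pi_2:I_s\to F$ over a curve $C$ is the $s$-fold self-product of the real locus of $C$, which is generically one-dimensional, so $\dim I_s=d+s$. The image of $\pi_1:I_s\to(\RR^2)^s$ has dimension at most $2s$, hence the fiber-dimension theorem forces a generic point $(p_1,\ldots,p_s)\in\operatorname{im}(\pi_1)$ to have fiber of dimension at least $(d+s)-2s=d-s\geq 1$. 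That fiber is precisely $F':=F\cap H_{p_1}\cap\cdots\cap H_{p_s}$, a positive-dimensional real subvariety of $F$ consisting of the curves in $F$ through all of $p_1,\ldots,p_s$.

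To extract the $t+1$ pairwise component-disjoint curves from $F'$, I would argue greedily. For any irreducible curve $\gamma\subset\RR^2$, the subset $F_\gamma=\{C\in F:\gamma\subset C\}$ is a proper subvariety of $F$ by irreducibility and non-degeneracy of $F$. Provided that $F'$ itself is non-degenerate---which a Zariski-generic choice of $(p_1,\ldots,p_s)$ should guarantee---the same reasoning gives that $F_\gamma\cap F'$ is a proper subvariety of $F'$ for each $\gamma$. Then, having chosen $C_1,\ldots,C_j\in F'$ (each with at most $D$ irreducible components), the set of $C\in F'$ sharing a component with some $C_i$ is a finite union of proper subvarieties of $F'$, so has dimension strictly less than $\dim F'\geq 1$, and I can pick $C_{j+1}\in F'$ outside it. Iterating $t+1$ times completes the construction.

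The main obstacle I anticipate is real-dimension bookkeeping. The equalities $\dim I_s=d+s$ and $\dim F'\geq 1$ tacitly use that the relevant curves carry a positive-dimensional \emph{real} locus, whereas a family defined over $\RR$ can in principle have complex dimension $d$ while most of its curves have empty real zero set. I would handle this by first locating a smooth real point $C_0\in F$ with real tangent space of dimension $d$ supporting a real analytic arc, placing the $p_i$ on that arc so that the constraints $H_{p_i}$ cut transversally across $T_{C_0}F$, and applying the real implicit function theorem to produce a real $(d-s)$-parameter family in $F'$ near $C_0$. The auxiliary claim that $F'$ is non-degenerate for a Zariski-generic tuple should follow from a parameter count: if some irreducible $\gamma$ were forced to lie in every $C\in F$ passing through $(p_1,\ldots,p_s)$ as $(p_1,\ldots,p_s)$ ranges over a Zariski-dense set, then $\gamma$ would in fact have to be a component of every curve in $F$, violating non-degeneracy.
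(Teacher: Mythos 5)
Your primary route (the incidence correspondence $I_s$ and a fiber-dimension count) does break down for the reason you yourself flag---nothing forces the members of $F$ to have one-dimensional real loci, so $\dim I_s=d+s$ is unjustified---and your proposed repair is, in substance, the paper's actual proof: take a point $w\in F$ that is smooth in dimension $d$, with associated curve $\gamma_0$, and choose the $s$ points on $\gamma_0$ itself. The genuine gap is that the crucial step of that repair is asserted rather than proved. You say you will place the $p_i$ on the arc ``so that the constraints $H_{p_i}$ cut transversally,'' but the existence of such points on the arc is exactly where the non-degeneracy hypothesis must be spent, and it is the mathematical heart of the theorem. The paper's mechanism is: because the component of $F$ through $w$ is non-degenerate, the linear space $\bigcap_{q\in\gamma_0}H_q$ meets $F$ near $w$ only in $w$; hence at each of the $s$ stages the current intersection $B(w,\eps)\cap F\cap H_{q_1}\cap\cdots\cap H_{q_{i-1}}$ still has positive dimension at $w$, so some $H_{q_i}$ with $q_i\in\gamma_0$ fails to contain its tangent space there and is therefore transverse; iterating $s$ times leaves a manifold of dimension at least $d-s\geq 1$ of curves through $q_1,\ldots,q_s$. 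Your sketch never makes contact with non-degeneracy at this point, so as written the transversality claim is unsupported.

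The second problem is that your justification of the final selection step is incompatible with your own patch. You extract the $t+1$ pairwise component-disjoint curves by assuming $F'$ is non-degenerate ``for a Zariski-generic choice of $(p_1,\ldots,p_s)$,'' but after the patch the $p_i$ are confined to a single analytic arc of the one curve $\gamma_0$---as non-generic a tuple as possible---and even in the unpatched version the semialgebraic fiber-dimension theorem only produces \emph{some} tuple with a large fiber, not a generic one. Moreover, the parameter count you offer only excludes a single fixed $\gamma$ lying in every curve through every tuple in a Zariski-dense set; the actual failure mode is a tuple-dependent common component, and tuples lying on a low-degree curve are precisely the dangerous case (for the family of all cubics, any nine points chosen on a conic force every cubic through them to contain that conic). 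So the properness of $\{C\in F' : \gamma\subset C\}$ in $F'$ must be argued for the specific $F'$ your construction yields, again via non-degeneracy of the component containing $w$; the paper is admittedly terse at this step as well, but it ties the claim to that hypothesis rather than to a genericity of the points that your construction cannot provide.
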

\begin{proof}
Suppose $\dim(F)>s$. For each $t$, we will find a set of $s$ points in $\RR^2$ and a set $\C$ of $t$ curves from $\mathcal{F}$ so that each of the $s$ points is incident to every curve from $\C$. 

Let $w\in F$ be a smooth point (in dimension $\dim(F)$) corresponding to a curve $\gamma_0\subset\RR^2$. The precise definition of a smooth point in dimension $\dim(F)$ is given in \cite[\S 3.3]{BCR}. The only property that we will use, however, is that there is a small Euclidean ball $B(w,\eps)\subset\mathbf{P}\mathbb{R}^{\binom{D+2}{2}}$ so that $F\cap B(w,\eps)$ is a smooth manifold of dimension $\dim(F)$ (again, see \cite[\S 3.3]{BCR} for details).

For each $p\in \RR^2$, let $H_p=\{\gamma\in \mathbf{P}\mathbb{R}^{\binom{D+2}{2}} \mid p\in\gamma\}$. We claim that there exists a point $p\in\RR^2$ so
that $B(w,\eps)\cap F\cap H_p$ is a smooth manifold of dimension $\dim(F)-1$. Indeed, since the irreducible component of $F$ containing $w$ is
non-degenerate, $\bigcap_{q\in\gamma_0}H_q$ only contains the point $w$.

Thus there exists a hyperplane $H_q$ that is transverse to $F$ at $w$, so (by making $\eps$ smaller if necessary), $B(w,\eps)\cap H_q\cap F$ is a smooth manifold of dimension $\dim(F)-1$. Repeat this $s$ times; we obtain a smooth manifold $B(w,\eps)\cap F\cap H_{q_1}\cap H_{q_2}\cap\ldots\cap H_{q_s}$ of dimension $\geq 1$. Since the irreducible component of $F$ containing $w$ is non-degenerate, we can select $t+1$ points from this manifold that correspond to curves in $\RR^2$, no two of which have a common irreducible component. Each of these curves is incident to the points $q_1,\ldots,q_s\in\RR^2$. 

This contradicts the assumption that for every finite set $\mathcal{P}\subset\RR^2$ and every finite set of curves $\C\subset F$, no two of which share a common component, the collection $(\mathcal{P},C)$ has $s$ degrees of freedom and multiplicity type $t$. We conclude that $\dim(F)\leq s$. 
\end{proof}

The converse direction is a bit more subtle. For example, let $p_0\in\RR^2$ and let $F\subset\mathbf{P}\mathbb{R}^{\binom{D+2}{2}}$ be the family of all curves of degree at most $D$ that pass through the point $p_0$. Then $\dim(F) = \binom{D+2}{2}-2$, but for each $t$, it is possible to find a finite set of points $\P$ and a set of irreducible curves $\C\subset F$ so that $(\P,\C)$ fails to have $\binom{D+2}{2}-2$ degrees of freedom and multiplicity type $t$. However, such an example is ``cheating,'' since $\P$ must contain the point $p_0$. If we require that $p_0\not\in\P$, then $(\P,\C)$ will have $\binom{D+2}{2}-2$ degrees of freedom and multiplicity type $O_D(1)$. However, we can disguise the previous example in more subtle ways. Thus it appears difficult to determine under what conditions a converse to Theorem \ref{degFreedomImpliesDimensionOfF} could be expected to hold.

\end{document}